\newtheorem{theorem}{Theorem}[section]
\newtheorem{lemma}[theorem]{Lemma}
\newtheorem{conjecture}[theorem]{Conjecture}
\newtheorem{corollary}[theorem]{Corollary}
\theoremstyle{definition}
\newtheorem{example}[theorem]{Example}
\theoremstyle{remark}
\newtheorem{remark}[theorem]{Remark}
\newcommand{\C}{{\mathbb C}}
\newcommand{\R}{{\mathbb R}}
\newcommand{\N}{{\mathbb N}}
\newcommand{\rme}{{\rm e}}
\newcommand{\cA}{{\cal A}}
\newcommand{\sig}{\sigma}
\newcommand{\gam}{\gamma}
\newcommand{\lam}{\lambda}
\newcommand{\del}{\delta}
\newcommand{\Spec}{{\rm Spec}}
\newcommand{\norm}{\Vert}
\renewcommand{\Re}{{\rm Re}}
\renewcommand{\Im}{{\rm Im}}
\newcommand{\dist}{{\rm dist}}
\newcommand{\Del}{\Delta}
\newenvironment{choices}{ \left\{ \begin{array}{ll} }{\end{array}\right.}
\newcommand{\move}[1]{}
\newcommand{\spec}[1]{\operatorname{Spec}(#1)}
\newcommand{\Ap}{\cA}
\newcommand{\ir}{\mathrm{i}}
\newcommand{\er}{\mathrm{e}}
\title{Spectra of a class of non-self-adjoint matrices\thanks{{\bf MSC 2010: }15A22; 47A56, 47A75, 15A18}\thanks{{\bf Keywords: } linear operator pencils, spectral theory, non-self-adjoint operators, tri-dia\-go\-nal matrices,  eigenvalue asymptotics}}
\author{E. Brian Davies \thanks{{\bf EBD: }Department of Mathematics, King's College London, Strand,
London, WC2R 2LS, UK; E.Brian.Davies@kcl.ac.uk;  \url{http://www.mth.kcl.ac.uk/\~davies/}
}
\and
Michael Levitin\thanks{{\bf ML: }Department of Mathematics and Statistics, University of Reading, Whiteknights, PO Box 220, Reading RG6 6AX,
UK; m.levitin@reading.ac.uk; \url{http://www.personal.reading.ac.uk/\~ny901965/}}
}
\date{26 November 2013}
\renewcommand\footnotemark{}
\begin{document}
\maketitle
\begin{abstract}
We consider a new class of non-self-adjoint matrices that arise from an indefinite self-adjoint linear pencil of matrices, and obtain the spectral asymptotics of the spectra as the size of the matrices diverges to infinity. We prove that the spectrum is qualitatively different when a certain parameter $c$ equals $0$, and when it is non-zero, and that certain features of the spectrum depend on Diophantine properties of $c$.
\end{abstract}

\section{Introduction}\label{intro}

The spectral theory of pencils of linear operators has a long history, with contributions by distinguished people including Krein, Langer, Gohberg, Pontryagin and Shkalikov. It has many applications, for example to control theory, mathematical physics and vibrating structures. We refer to \cite{Markus} and \cite{TiMe} for accounts of this subject and extensive bibliographies. Among the theoretical tools that have been developed to study some such problems is the theory of Krein spaces, which also has a long history, \cite{La} and \cite{GLR}. There is also a substantial numerical literature on self-adjoint linear and quadratic pencils, \cite{PaCh90,Pa91,HTVD}. An interesting physically motivated example with some unusual features has recently been considered in \cite{ElLePo}.

It is well-known that self-adjoint pencils may have complex eigenvalues. If the pencil depends on a real parameter $c$ in addition to the spectral parameter, which we always call $\lambda$, one often sees two real eigenvalues of the pencil meeting at a square root singularity as $c$ changes, and then emerging as a complex conjugate pair, or vice versa. However, little has been written about the distribution of the complex eigenvalues, and the unexpected phenomena revealed in this paper show how hard a full understanding is likely to be. Some of these phenomena may disappear when studying suitable infinite-dimensional pencils of differential operators, but, if so, the reason for this will need to be explained.

The paper studies the simplest, non-trivial example of a finite-dimensional, self-adjoint, linear pencil, and in particular the asymptotic distribution of its non-real eigenvalues as the dimension increases. Our main results are presented in Theorem~\ref{th:c=0}, which is illustrated in Figure~\ref{fig:pict2}, and in Theorem~\ref{th:asymptcne0}, illustrated in Figure~\ref{fig:pict6}. However, a number of other cases exhibiting further complexities are also considered. Numerical studies indicate that some of the phenomena described here occur for a much larger class of pencils, including the case in which the matrix $D$ defined in \eqref{Ddef}
is indefinite and has slowly varying coefficients on each of the two subintervals concerned. Proving this is a task for the future.

A self-adjoint, linear pencil of $N\times N$ matrices is defined to be a family of matrices of the form $\Ap=\Ap(\lambda)=H-\lam D$, where $H,\, D$ are self-adjoint $ N\times N$ matrices and $\lam\in\C$. The number $\lam_0$ is said to be an \emph{eigenvalue} of this pencil $\Ap$ if $\Ap(\lam_0)=H-\lam_0 D$ is not invertible, or equivalently if $Hv=\lam_0 Dv$ has a non-zero solution $v\in \C^N$. The \emph{spectrum} of the pencil $\Ap$ is the set of all its eigenvalues,
and will be denoted by $\spec{\Ap}$. It coincides with the set of all roots of the polynomial
\[
p(\lam)=\det(H-\lam D).
\]
We always assume that $D$ is invertible, so that $p(\lam)$ is a polynomial of degree $N$ with non-zero leading coefficient. The spectrum of the pencil equals that of the matrix $D^{-1}H$, which is generically non-self-adjoint. In the standard case when $D$ is the identity matrix, the spectrum of the pencil $H-\lambda I$  coincides with the spectrum of the matrix $H$, which we also denote by $\spec{H}$.

The spectrum of such a self-adjoint pencil is real if either $H$ or $D$ is a definite matrix, i.e.\ all of its eigenvalues have the same sign. If both $H$ and $D$ are sign-indefinite matrices the problem is said to be indefinite, and it is known that the spectrum may then be complex. Numerical studies show that the eigenvalues of an indefinite self-adjoint pencil often lie on or under a small set of curves. Theorems~\ref{th:c=0} and \ref{th:asymptcne0} prove that this is true in two cases, and determine the curves asymptotically as $N\to\infty$. The analysis reduces to proving a similar statement for a certain class of large sparse polynomials. We also consider the algebraic multiplicities of the eigenvalues, and find that they may differ from the geometric multiplicities; see Theorem~\ref{n_is_m_plus_1_theorem}.

\section{Classes of problems and some general identities}

We consider the following class of problems. Fix an integer $N\in\N$, and define the $N\times N$ classes of matrices $H_{N;c}$ and $D_{m,n; \sigma, \tau}$, where
\[
H_{N;c}=\begin{pmatrix}
c & 1 & 0 & \dots & 0\\
1 & c & 1 & \dots & 0\\
 & \ddots & \ddots & \ddots & \\
 0 &  \dots & 1 & c & 1\\
 0 & \dots & 0 & 1 & c
\end{pmatrix}
\]
is tri-diagonal with the entries
\begin{equation}
(H_{N;c})_{r,s}=\begin{choices}
c &\quad\text{if }r=s,\\
1&\quad\text{if }|r-s|=1,\\
0&\quad\text{otherwise,}
\end{choices}\label{Hdef}
\end{equation}
where $c\in\R$ is a parameter, and
\begin{equation}\label{Ddef}
D_{m,n; \sigma, \tau}=\begin{pmatrix}
\sigma&&&&&\\
&\ddots&&&&\\
&&\sigma&&&\\
&&&\tau&&\\
&&&&\ddots&\\
&&&&&\tau
\end{pmatrix}\qquad
\begin{matrix}
\left.
\vphantom{\begin{matrix}\sigma&&\\
&\ddots&\\
&&\sigma
\end{matrix}
}
\right\}
\text{\small $m$ rows}
\\
\left.
\vphantom{\begin{matrix}\tau&&\\
&\ddots&\\
&&\tau
\end{matrix}
}
\right\}
\text{\small $n$ rows}
\end{matrix}
\end{equation}
is diagonal with the entries
\[
(D_{m,n; \sigma, \tau})_{r,s}=\begin{choices}
\sigma&\quad\text{if }r=s\le m,\\
\tau&\quad\text{if }m+1\le r=s\le m+n,\\
0&\quad\text{otherwise,}
\end{choices}
\]
where $m,n\in\N$ and $\sigma, \tau\in\C$ are parameters, and we assume $m+n=N$.

In the case $\sigma=-\tau=1$, we denote for brevity
\[
D_{m,n}:=D_{m,n;1,-1}=\begin{pmatrix}
1&&&&&\\
&\ddots&&&&\\
&&1&&&\\
&&&-1&&\\
&&&&\ddots&\\
&&&&&-1
\end{pmatrix}\qquad
\begin{matrix}
\left.
\vphantom{\begin{matrix}1&&\\
&\ddots&\\
&&1
\end{matrix}
}
\right\}
\text{\small $m$ rows}
\\
\left.
\vphantom{\begin{matrix}-1&&\\
&\ddots&\\
&&-1
\end{matrix}
}
\right\}
\text{\small $n$ rows}
\end{matrix}.
\]

We study the eigenvalues of the linear operator pencil
\[
\Ap_{m,n;c}=\Ap_{m,n;c}(\lambda)=H_{m+n;c}-\lambda D_{m,n}
\]
as $N=m+n\to\infty$. We shall write the eigenvalues as
\[
\lambda=u+\ir v/N,\qquad u,v\in\R.
\]
We justify the normalisation of the imaginary part of the eigenvalues later on.

We start with the following easy result on the localisation of eigenvalues of the pencil $\Ap_{m,n;c}$.

\begin{lemma}\label{th:location_rough}
\begin{enumerate}
\item[(a)] The spectrum $\spec{\Ap_{m,n;c}}$ is invariant under the symmetry $\lam\to\overline{\lam}$.
\item[(b)] All the eigenvalues $\lambda\in\spec{\Ap_{m,n;c}}$ satisfy
\[
|\lam|<2+|c|.
\]
\item[(c)] If $|c|\ge 2$, then $\spec{\Ap_{m,n;c}}\subset\R$.
\end{enumerate}
\end{lemma}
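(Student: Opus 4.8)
The plan is to prove the three parts essentially independently, using only elementary properties of the pencil and standard operator-norm / numerical-range bounds.

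For part (a), I would observe that $H_{m+n;c}$ and $D_{m,n}$ are both real symmetric matrices (since $c\in\R$). Hence $p(\lam)=\det(H_{m+n;c}-\lam D_{m,n})$ is a polynomial with real coefficients, so its root set is invariant under complex conjugation; equivalently, if $H v=\lam Dv$ with $v\neq 0$, then taking complex conjugates gives $H\overline v=\overline\lam D\overline v$ with $\overline v\neq 0$.

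For part (b), the idea is that $\lam_0\in\spec{\Ap_{m,n;c}}$ exactly when $H_{m+n;c}v=\lam_0 D_{m,n}v$ for some $v\neq0$, and since $D_{m,n}$ has all diagonal entries equal to $\pm1$ it is unitary (indeed $D_{m,n}^2=I$), so this rearranges to $D_{m,n}H_{m+n;c}v=\lam_0 v$, i.e.\ $\lam_0\in\spec{D_{m,n}H_{m+n;c}}$. Therefore $|\lam_0|\le\|D_{m,n}H_{m+n;c}\|=\|H_{m+n;c}\|$. It remains to bound $\|H_{m+n;c}\|$: write $H_{m+n;c}=cI+S$ where $S$ is the tri-diagonal matrix with zero diagonal and $1$'s on the off-diagonals, and note $\|S\|<2$ (e.g.\ from the explicit eigenvalues $2\cos(k\pi/(N+1))$ of $S$, or from a crude Gershgorin/Schur-test bound giving $\|S\|\le 2$; the strict inequality $<2$ holds for finite $N$). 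Then $\|H_{m+n;c}\|\le|c|+\|S\|<2+|c|$, giving the claim. (One should be slightly careful to land on the strict inequality; using the exact eigenvalues of $S$ makes this immediate.)

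For part (c), suppose $|c|\ge 2$. The cleanest route is via the numerical range / quadratic form. If $Hv=\lam Dv$ with $\|v\|=1$, then $\lam=\la Hv,v\ra/\la Dv,v\ra$ provided the denominator is nonzero; since $H=cI+S$ is real symmetric, $\la Hv,v\ra\in\R$, and $\la Dv,v\ra\in\R$ because $D$ is real symmetric, so $\lam$ is real whenever $\la Dv,v\ra\neq0$. Thus the only way to get a non-real eigenvalue is to have $\la Dv,v\ra=0$ for the corresponding eigenvector, which forces $\la Hv,v\ra=0$ as well; so it suffices to show $\la Hv,v\ra\neq0$ for all $v\neq0$ when $|c|\ge2$, i.e.\ that $H_{m+n;c}$ is definite. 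But $H_{m+n;c}=cI+S$ with $\|S\|<2\le|c|$, so $H_{m+n;c}$ is positive definite if $c\ge2$ and negative definite if $c\le-2$; in either case $\la Hv,v\ra\neq0$, and we are done. (This also re-proves, in passing, the remark in the introduction that the pencil has real spectrum when $H$ is definite.)

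I expect part (b)'s strict inequality and part (c) to be the only places needing any care — specifically, handling the degenerate case $\la D_{m,n}v,v\ra=0$ in part (c), which is exactly where the definiteness of $H_{m+n;c}$ for $|c|\ge2$ does the work. Everything else is bookkeeping with $D_{m,n}^2=I$ and the norm of the free Jacobi matrix $S$.
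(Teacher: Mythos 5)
Your proof is correct. Parts (a) and (b) essentially match the paper's argument: part (b) in both cases reduces the pencil spectrum to that of $D_{m,n}\inv H_{N;c}$ (you use $D_{m,n}^2=I$ explicitly), applies $\norm D_{m,n}\inv H_{N;c}\norm = \norm H_{N;c}\norm$, and then bounds $\norm H_{N;c}\norm$ via the explicit spectrum $2\cos(\pi j/(N+1))$ of the free Jacobi matrix. For part (a) you appeal to the realness of $H$ and $D$ (real characteristic polynomial, or conjugating the eigenvector equation); the paper instead uses only Hermiticity, arguing that $(H-\lam D)^\ast = H-\overline{\lam}D$ is singular whenever $H-\lam D$ is, which is marginally more general but gives the same conclusion here.

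Part (c) is where your route genuinely diverges from the paper. The paper observes that $|c|\geq2$ forces $H_{N;c}$ to be definite and then conjugates the pencil, writing $\lam\in\spec{\Ap_{m,n;c}}\Leftrightarrow 1/\lam\in\spec{H_{N;c}^{-1/2}D_{m,n}H_{N;c}^{-1/2}}$, reducing to the spectrum of a self-adjoint matrix. You instead use a Rayleigh-quotient argument: from $Hv=\lam Dv$ you get $\lam=\la Hv,v\ra/\la Dv,v\ra$ when the denominator is nonzero, which is a ratio of reals, and the definiteness of $H$ rules out the degenerate case $\la Dv,v\ra=0$ (since that would force $\la Hv,v\ra=0$). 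Both are standard devices for definite pencils; the congruence trick is more structural and simultaneously yields the location of the reciprocal spectrum, while your quadratic-form argument is more elementary, avoids $H^{-1/2}$, and isolates exactly where definiteness is needed. Either works.
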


 In our asymptotic analysis, we concentrate on three special cases:
\begin{itemize}
 \item $m=n$, $c=0$;
 \item $m=n$, $c\ne 0$;
 \item $m\ne n$, $c=0$.
 \end{itemize}

 In some of these cases we can improve the localisation results of Lemma \ref{th:location_rough}, see Lemmas \ref{th:c0symm},  \ref{th:cne0symm},  and also cf. Conjecture \ref{conj:Relambdabound}.

 \begin{proof}[Proof of Lemma \ref{th:location_rough}]
 \begin{enumerate}
 \item[(a)]  This result is true for any pencil $H-\lambda D$ with self-adjoint coefficients $H$ and $D$: If $\lam$ is an eigenvalue of the pencil then $H-\lam D$ is not invertible, therefore $(H-\lam D)^\ast=H-\overline{\lam} D$ is not invertible and $\overline{\lam}$ is an eigenvalue.
 \item[(b)] A direct calculation shows that the eigenvalues of $H_{N;0}$, where $N=m+n$,  are given by
\begin{equation}\label{Heig}
\mu_j=2\cos(\pi j/(N+1)),\qquad 1\leq j\leq N.
\end{equation}
This establishes that $\norm H_{N;0}\norm=\mu_1=2\cos(\pi /(N+1))<2$. Thus $\norm H_{N;c}\norm=\norm H_{N;0}+cI\norm<2+|c|$. The eigenvalues of the pencil $\Ap_{m,n;c}$ coincide with the eigenvalues of the matrix
$D_{m,n}^{-1}H_{N;c}$. Therefore every eigenvalue satisfies $|\lam|\leq \norm D_{m,n}^{-1}H_{N;c}\norm = \norm H_{N;c}\norm <2+|c|$.
 \item[(c)]  If $c\geq 2$ then $H_{N;c}>0$ by \eqref{Heig}. Therefore
\[
\lam\in\spec{\Ap_{m,n;c}}\qquad\Longleftrightarrow\qquad\frac{1}{\lam}\in\spec{H_{N;c}^{-1/2}D_{m,n}H_{N;c}^{-1/2}}.
\]
Since $H_{N;c}^{-1/2}D_{m,n}H_{N;c}^{-1/2}$ is self-adjoint, the pencil $\Ap_{m,n;c}$ has real spectrum.

If $c\le-2$, then $H_{N; c}<0$ by  \eqref{Heig}, and the proof follows in a similar manner.
 \end{enumerate}
 \end{proof}

Our final results of this section reduces the eigenvalue problem for the pencil $\Ap_{m,n;c}$ to an explicit complex polynomial equation in two auxiliary variables.   We start by introducing some extra notation.

We shall always use the substitutions
\begin{equation}\label{zw_subs}
\lambda-c:=z+\frac{1}{z},\qquad \lambda+c:=w+\frac{1}{w},
\end{equation}
for the eigenvalues of the pencil $\cA_{m,n;c}$, where $z, w$ are, in general, some complex numbers. Note that each $\lambda$ corresponds to two values of $z$ (which are the solutions of the quadratic equation
\begin{equation}\label{eq:zeq}
z^2-(\lambda-c)z+1=0,
\end{equation}
and are inverses of each other), and two values of $w$ (which are the solutions of the quadratic equation
\begin{equation}\label{eq:weq}
w^2-(\lambda+c)w+1=0,
\end{equation}
which are also inverses of each other). If $c=0$, then $w=z$. If $\lam\notin\R$, we define $z,\, w$ to be the unique solutions of \eqref{eq:zeq},  \eqref{eq:weq}, resp., that satisfy
\begin{equation}\label{eq:z,w>1}
|z|>1,\qquad |w|> 1.
\end{equation}

We need also to introduce the families of meromorphic functions $\beta_{m,n}:\C^2\to\C$, $\gamma_{m,n}:\C^2\to\C$,  and $F_m:\C\to\C$ defined by
\begin{equation}
\beta_{m,n}(z,w)=(z^{m+1}-z^{-m-1})
(w^{n+1}-w^{-n-1})
+(z^m-z^{-m})
(w^n-w^{-n}),\label{betdef}
\end{equation}
\begin{equation}
\gam_{m,n}(z,w)=(z^{m+1}-z^{-m-1})
(w^{n+1}-w^{-n-1})
-(z^m-z^{-m})
(w^n-w^{-n}),\label{gamdef}
\end{equation}
and
\begin{equation}
\label{F_def}
F_m(z)=\frac{z^{m+1}-z^{-m-1}}{z^{m}-z^{-m}}.
\end{equation}
Obviously
\[
F_m(z)=\frac{\sinh((m+1)\log(z))}{\sinh({m}\log(z))}
\]
and the definition of $F_m$ can be thus also rewritten in terms of Chebyshev polynomials of the second kind. 
Ratios of orthogonal polynomials have been studied in considerable generality, see e.g. \cite{Nev}, \cite[Theorem 9.5.11]{Simon}, \cite{Simanek}, but the asymptotic properties that we use appear to be new, even for the simple case that we consider.

\begin{lemma}\label{lem:Fanotherbound}
Let $\tilde{F}_m:\C\cup\{\infty\}\to\C\cup\{\infty\}$, $m\ge 1$ denote the sequence of iteratively defined rational functions
\begin{equation}\label{eq:Ftildedef}
\tilde{F}_{m+1}(\zeta)=\zeta-\frac{1}{\tilde{F}_{m}(\zeta)},\qquad \tilde{F}_1(\zeta)=\zeta,\qquad\zeta\in\C\cup\{\infty\}.
\end{equation}
Then
\begin{enumerate}
\item[(a)] $\tilde{F}_m$ are Herglotz functions such that
\[
\Im(\tilde{F}_m(\zeta))\ge\Im(\zeta),\qquad\zeta\in\{z\in\C: \Im(z)>0\},
\]
with equality only for $m=1$.
\item[(b)] If
\begin{equation}\label{eq:z+1/zlowerbound}
\left|\zeta\right|>2,
\end{equation}
then
\begin{equation}\label{eq:Flowerbound}
|\tilde{F}_m(\zeta)|>1
\end{equation}
for all $m\in\N$.
\item[(c)]
\begin{equation}\label{eq:FandFtilde}
F_m(\zeta)=\tilde{F}_m\left(\zeta+\frac{1}{\zeta}\right).
\end{equation}
\end{enumerate}
\end{lemma}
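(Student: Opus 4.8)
The plan is to prove all three parts by induction on $m$, and the cleanest route is to handle part (c) first, since it exposes the algebraic identity from which (a) and (b) follow almost immediately. The key observation is a three-term recurrence: writing $a_m(z)=z^m-z^{-m}$, one checks directly that $a_{m+1}=(z+z^{-1})a_m-a_{m-1}$, and since $F_m=a_{m+1}/a_m$ this gives
\[
F_{m+1}(z)=\left(z+\frac1z\right)-\frac{1}{F_m(z)},\qquad F_1(z)=z+\frac1z.
\]
This is just the standard recurrence for Chebyshev polynomials of the second kind rewritten as a finite continued fraction, and it is the exact analogue of \eqref{eq:Ftildedef}.

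For part (c) I would then induct on $m$. When $m=1$, both $F_1(\zeta)$ and $\tilde F_1(\zeta+\zeta^{-1})$ equal $\zeta+\zeta^{-1}$. Assuming $F_m(\zeta)=\tilde F_m(\zeta+\zeta^{-1})$, the displayed recurrence together with \eqref{eq:Ftildedef} yields
\[
F_{m+1}(\zeta)=\left(\zeta+\tfrac1\zeta\right)-\frac{1}{F_m(\zeta)}=\left(\zeta+\tfrac1\zeta\right)-\frac{1}{\tilde F_m(\zeta+\zeta^{-1})}=\tilde F_{m+1}\!\left(\zeta+\tfrac1\zeta\right),
\]
which closes the induction and proves \eqref{eq:FandFtilde}.

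For part (a) I would again induct. The base case $m=1$ is immediate: $\tilde F_1(\zeta)=\zeta$ is Herglotz with $\Im\tilde F_1(\zeta)=\Im\zeta$. For the inductive step, suppose $\tilde F_m$ is analytic on the open upper half-plane $\C_+$ and satisfies $\Im\tilde F_m(\zeta)\ge\Im\zeta>0$ there. Then $\tilde F_m$ has no zeros in $\C_+$, so $\tilde F_{m+1}=\zeta-1/\tilde F_m$ is again analytic on $\C_+$; and using $\Im(-1/w)=\Im(w)/|w|^2$ we get
\[
\Im\tilde F_{m+1}(\zeta)=\Im\zeta+\frac{\Im\tilde F_m(\zeta)}{|\tilde F_m(\zeta)|^2}>\Im\zeta .
\]
Hence $\tilde F_{m+1}$ maps $\C_+$ into $\C_+$, so it is Herglotz, and the inequality is strict; since a strictly positive term is added at every step $m\to m+1$, equality $\Im\tilde F_m(\zeta)=\Im\zeta$ holds only for $m=1$.

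Part (b) is a one-line induction via the triangle inequality: it holds for $m=1$ because $|\tilde F_1(\zeta)|=|\zeta|>2>1$, and if $|\tilde F_m(\zeta)|>1$ then
\[
|\tilde F_{m+1}(\zeta)|=\left|\zeta-\frac{1}{\tilde F_m(\zeta)}\right|\ge|\zeta|-\frac{1}{|\tilde F_m(\zeta)|}>2-1=1 .
\]
None of the steps presents a genuine obstacle; the only point needing a little care is the bookkeeping in part (a) — checking at each stage that $\tilde F_m$ has neither a pole nor a zero in $\C_+$, so that the recursion preserves analyticity and the Herglotz property — but this is automatic once $\Im\tilde F_m>0$ has been established at the previous step.
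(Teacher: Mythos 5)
Your proof is correct and takes essentially the same route as the paper: both establish the three-term recurrence $F_{m+1}(\zeta)=\zeta+\zeta^{-1}-1/F_m(\zeta)$ (the paper via the explicit algebraic manipulation of $\zeta^{m+2}-\zeta^{-(m+2)}$, you via the $a_m$ recurrence, which is the same computation repackaged), and both then prove (a) and (b) by the identical inductive estimates $\Im\tilde F_{m+1}=\Im\zeta+\Im\tilde F_m/|\tilde F_m|^2$ and $|\tilde F_{m+1}|\ge|\zeta|-1/|\tilde F_m|>1$. Your explicit remark that $\Im\tilde F_m>0$ rules out zeros of $\tilde F_m$ in $\C_+$, keeping $\tilde F_{m+1}$ analytic there, is a small point the paper leaves implicit.
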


\begin{proof}[Proof of Lemma \ref{lem:Fanotherbound}] To prove statements (a) and (b), we proceed by induction. First, for $m=1$, (a) and (b) are obvious. We also have
\[
F_1(\zeta)=\frac{\zeta^2-\zeta^{-2}}{\zeta-\zeta^{-1}}=\zeta+\frac{1}{\zeta},
\]
and so \eqref{eq:FandFtilde} for $m=1$ immediately follows from \eqref{eq:Ftildedef}.

Suppose now that (a) and (b) hold for some $m\in\N$. Then by \eqref{eq:Ftildedef}
\[
\Im(\tilde{F}_{m+1}(\zeta))=\Im\left(\zeta-\frac{1}{\tilde{F}_{m}(\zeta)}\right)=\Im(\zeta)+\frac{\Im(\tilde{F}_m(\zeta))}{|\tilde{F}_m(\zeta)|^2}>\Im(\zeta),
\]
and
\[
|\tilde{F}_{m+1}(\zeta))|=\left|\zeta-\frac{1}{\tilde{F}_{m}(\zeta)}\right|>|\zeta|-\frac{1}{|\tilde{F}_{m}(\zeta)|}>2-1=1,
\]
proving (a) and (b).
Additionally, by \eqref{F_def},
\[
\begin{split}
F_{m+1}(\zeta)&=\frac{\zeta^{m+2}-\zeta^{-(m+2)}}{\zeta^{m+1}-\zeta^{-(m+1)}}\\
&=\frac{\left(\zeta^{m+1}-\zeta^{-(m+1)}\right)\left(\zeta+\zeta^{-1}\right)-\zeta^{m}+\zeta^{-m}}{\zeta^{m+1}-\zeta^{-(m+1)}}\\
&=\zeta+\frac{1}{\zeta}-\frac{1}{F_{m}(\zeta)},
\end{split}
\]
and \eqref{eq:FandFtilde} follows from \eqref{eq:Ftildedef}, proving (c).
\end{proof}

Our first main result relates the eigenvalues of the pencil $\Ap_{m,n;c}$ with the functions $\beta_{m,n}$ and $F_m$.

\begin{theorem}\label{th:main1}
Let $\lambda\in\C\backslash \{-2-c,-2+c,2-c,2+c\}$ and let
$\lambda,z,w$ be related by \eqref{zw_subs}--\eqref{eq:z,w>1}. Then
\begin{enumerate}
\item[(a)] $\lambda$ is an eigenvalue of the pencil $\Ap_{m,n;c}$ if and only if
\begin{equation}\label{eq:beta0}
\beta_{m,n}(z,w)=0.
\end{equation}
\item[(b)] If  $\lambda$ is an eigenvalue of $\Ap_{m,n;c}$ then it is real if
and only if both $z$ and $w$ lie in the set
\[
(\R\backslash \{0\})\cup\{\zeta\in\C:|\zeta|=1\}.
\]
\item[(c)] If $\lambda\notin\R$ then $\lambda$ is an eigenvalue of
$\Ap_{m,n;c}$ if and only if
\begin{equation}
\label{F_condition}
F_m(z)F_n(w)=-1,
\end{equation}
where $\lam, z, w$ are related by  \eqref{zw_subs}--\,\eqref{eq:z,w>1}.
\end{enumerate}
\end{theorem}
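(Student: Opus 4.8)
The plan is to solve the eigenvalue equation $H_{m+n;c}v=\lambda D_{m,n}v$ for a nonzero $v=(v_1,\dots,v_N)\in\C^N$, $N=m+n$, directly by the transfer-matrix method, exploiting that $D_{m,n}$ is constant on each of the blocks $\{1,\dots,m\}$ and $\{m+1,\dots,N\}$. With the convention $v_0=v_{N+1}=0$, rows $r=1,\dots,m$ become the recurrence $v_{r+1}=(\lambda-c)v_r-v_{r-1}$, whose characteristic equation is \eqref{eq:zeq}; since $\lambda\notin\{2+c,-2+c\}$ its roots $z,z^{-1}$ are distinct, so together with $v_0=0$ one gets $v_r=A(z^r-z^{-r})$ for $0\le r\le m+1$ and some scalar $A$. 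On the second block the substitution $\wt{v}_r=(-1)^rv_r$ converts rows $r=m+1,\dots,N$ into $\wt{v}_{r+1}=(\lambda+c)\wt{v}_r-\wt{v}_{r-1}$, with characteristic equation \eqref{eq:weq}; since $\lambda\notin\{2-c,-2-c\}$ its roots $w,w^{-1}$ are distinct, so $\wt{v}_r=Cw^r+Ew^{-r}$ for $m\le r\le N+1$. (Extending $v$ by its recurrence, row $r=N$ of the system is exactly the condition $v_{N+1}=0$.) It is worth noting that the four excluded values of $\lambda$ are precisely those at which one of the pairs $\{z,z^{-1}\}$, $\{w,w^{-1}\}$ degenerates, which is what makes both closed forms legitimate.

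Next I would match the two representations of $\wt{v}$ at $r=m$ and $r=m+1$: this is a $2\times2$ Vandermonde-type system for $(C,E)$ with determinant $w^{-1}-w\ne0$, and substituting its solution into $\wt{v}_{N+1}=Cw^{N+1}+Ew^{-N-1}$ and simplifying (using $N-m=n$) yields the identity $v_{N+1}=(-1)^nA\,\beta_{m,n}(z,w)/(w-w^{-1})$. Since $v\equiv0$ when $A=0$, whereas $v_1=A(z-z^{-1})\ne0$ when $A\ne0$, and since $w\notin\{0,1,-1\}$ (each of these would force $\lambda$ into the excluded set), this shows that $\lambda$ is an eigenvalue if and only if $\beta_{m,n}(z,w)=0$, proving (a). The condition does not depend on the choices in \eqref{eq:z,w>1}, because replacing $z$ by $z^{-1}$ or $w$ by $w^{-1}$ only changes the sign of $\beta_{m,n}$.

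Part (b) is elementary (and in fact holds for every admissible $\lambda$, not just eigenvalues): from $\overline{z+z^{-1}}-(z+z^{-1})=(\bar z-z)\bigl(1-|z|^{-2}\bigr)$ one reads off that $z+z^{-1}\in\R$ if and only if $z\in\R\setminus\{0\}$ or $|z|=1$; since $c\in\R$ and $\lambda-c=z+z^{-1}$, $\lambda+c=w+w^{-1}$, the statements ``$\lambda\in\R$'', ``$z$ lies in the stated set'', ``$w$ lies in the stated set'' are mutually equivalent, so $\lambda$ is real iff both $z$ and $w$ lie there. For (c), when $\lambda\notin\R$ the normalisation \eqref{eq:z,w>1} gives $|z|,|w|>1$, hence $z^{2m}\ne1$, $z^{2m+2}\ne1$, $w^{2n}\ne1$, $w^{2n+2}\ne1$, so all of $z^m-z^{-m}$, $z^{m+1}-z^{-m-1}$, $w^n-w^{-n}$, $w^{n+1}-w^{-n-1}$ are nonzero and $F_m(z),F_n(w)$ are finite and nonzero; dividing \eqref{eq:beta0} by $(z^m-z^{-m})(w^n-w^{-n})$ gives $F_m(z)F_n(w)=-1$, the converse follows by multiplying back, and combining with (a) yields (c).

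The only real computation is the middle step: carrying the sign flip $\wt{v}_r=(-1)^rv_r$ through the second block, solving the interface system, and collapsing $\wt{v}_{N+1}$ into the closed form $\beta_{m,n}(z,w)$ — routine, but requiring care with the exponents and the powers of $-1$. Everything else is bookkeeping (matching the excluded set with the degenerate characteristic cases) or the elementary reality criterion for $z+z^{-1}$.
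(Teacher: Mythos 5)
Your argument is correct, and it takes a genuinely different route from the paper. The paper works at the level of determinants: Lemma~\ref{sigtaulemma} establishes the closed form $\det(H_{N;0}-D_{m,n;\sigma,\tau})=(-1)^{m+n}\gamma_{m,n}(z,w)/((z-z^{-1})(w-w^{-1}))$ by induction on $N$ with a top-row expansion, and Theorem~\ref{th:main1}(a) then follows by the observation $\det(H_{N;c}-\lambda D_{m,n})=\det(H_{N;0}-D_{m,n;\lambda-c,-c-\lambda})$ together with a sign change $w\mapsto -w$ that converts $\gamma_{m,n}$ into $\beta_{m,n}$. You instead solve the eigenvalue equation directly as a three-term recurrence with piecewise-constant coefficients, using the sign flip $\widetilde{v}_r=(-1)^rv_r$ on the second block and matching at the interface; I checked the Vandermonde elimination and it does reproduce $v_{N+1}=(-1)^nA\,\beta_{m,n}(z,w)/(w-w^{-1})$, so the boundary condition $v_{N+1}=0$ gives exactly~\eqref{eq:beta0}. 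Parts~(b) and~(c) are handled the same way in both treatments, via $\Im(\zeta+\zeta^{-1})=\Im(\zeta)(1-|\zeta|^{-2})$ and division of $\beta_{m,n}$ by $(z^m-z^{-m})(w^n-w^{-n})$; your extra observation that (b) holds for all admissible $\lambda$, not only eigenvalues, is accurate. The trade-off: your transfer-matrix route is more transparent about eigenvector structure and gives (a) with minimal bookkeeping, while the paper's determinant identity yields the full characteristic polynomial $p_{n,m;c}(\lambda)$, not merely its zero set, which is reused later (e.g.\ the factorisation~\eqref{n=m+1det} in Theorem~\ref{n_is_m_plus_1_theorem} and the algebraic-multiplicity statements there). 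You have correctly noted the places where nondegeneracy is needed ($z,w\neq\pm1$, guaranteed by the excluded $\lambda$-values) and that the criterion is invariant under $z\mapsto z^{-1}$, $w\mapsto w^{-1}$.
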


The problem with using \eqref{F_condition} to characterize real
eigenvalues is the possibility that the numerator or denominator of
$F_m(z)$ or of $F_n(w)$ vanishes. Although one can treat all of the
special cases in turn, it is easier to revert to the use of part (a)
of the theorem.

The proof of Theorem \ref{th:main1}  is based on the following auxiliary result.

\begin{lemma}\label{sigtaulemma}
If $\sig=z+z^{-1}\not=\pm2$ and $\tau=w+w^{-1}\not=\pm2$ then
\begin{equation}
\det(H_{N;0}-D_{m,n;\sig,\tau})
=(-1)^{m+n}\frac{\gam_{m,n}(z,w)}
{(z-z^{-1})(w-w^{-1})}\label{detidentity}
\end{equation}
\end{lemma}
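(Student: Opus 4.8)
The plan is to compute the determinant $\det(H_{N;0}-D_{m,n;\sig,\tau})$ by exploiting the block structure of the matrix. Note that $H_{N;0}-D_{m,n;\sig,\tau}$ is itself tri-diagonal, with $-\sigma$ on the first $m$ diagonal entries, $-\tau$ on the last $n$ diagonal entries, and $1$ on the off-diagonals. So the whole problem reduces to evaluating the determinant of a tri-diagonal matrix whose diagonal is constant on two consecutive blocks. The natural tool is the three-term recurrence for the leading principal minors. Let $d_k$ denote the determinant of the top-left $k\times k$ submatrix; then $d_k = -\sigma\, d_{k-1} - d_{k-2}$ for $1\le k\le m$, and $d_k = -\tau\, d_{k-1} - d_{k-2}$ for $m+1\le k\le m+n$, with $d_0=1$, $d_{-1}=0$.

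First I would solve the constant-coefficient recurrence $d_k = -\sigma d_{k-1} - d_{k-2}$ on the first block. Its characteristic equation is $t^2+\sigma t+1=0$; substituting $\sigma=z+z^{-1}$ shows the roots are $-z$ and $-z^{-1}$, so $d_k = (-1)^k \frac{z^{k+1}-z^{-(k+1)}}{z-z^{-1}}$ for $0\le k\le m$ (one checks the initial conditions, using $z\ne z^{-1}$, i.e. $\sigma\ne\pm2$). In particular I need both $d_m$ and $d_{m-1}$ at the interface. Then on the second block I solve $d_k=-\tau d_{k-1}-d_{k-2}$ for $m\le k\le m+n$, again with characteristic roots $-w,-w^{-1}$; writing $d_{m+j}$ as a linear combination $A(-w)^j + B(-w^{-1})^j$ and fixing $A,B$ from $d_m$ and $d_{m-1}$ (equivalently, from $d_m$ and $d_{m+1}=-\tau d_m - d_{m-1}$), I obtain a closed form for $d_{m+n}=d_N$.

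The bookkeeping step is to simplify the resulting expression. After substituting the block-one formulas for $d_m,d_{m-1}$ into the block-two solution and collecting terms, the factor $(-1)^{m+n}$ pulls out and the numerator should organize into
\[
(z^{m+1}-z^{-(m+1)})(w^{n+1}-w^{-(n+1)}) - (z^m-z^{-m})(w^n-w^{-n}),
\]
divided by $(z-z^{-1})(w-w^{-1})$, which is exactly $(-1)^{m+n}\gam_{m,n}(z,w)/((z-z^{-1})(w-w^{-1}))$ by \eqref{gamdef}. A clean way to force this identity is to observe that both $z^{k+1}-z^{-(k+1)}$ (in the index $k$) and the sequence $d_{m+j}$ satisfy the same recurrence, so it suffices to verify the claimed formula matches the correct boundary data at $j=0$ and $j=1$; i.e., check that the right-hand side of \eqref{detidentity}, read as a function of $n$, reproduces $d_m$ when $n=0$ and $d_{m+1}$ when $n=1$, which are two elementary hyperbolic-type identities in $z$ and $w$ (using $\tau=w+w^{-1}$).

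The main obstacle I anticipate is purely organizational: handling the interface between the two blocks correctly and not mismatching indices when passing $d_{m-1},d_m$ into the second recurrence, together with keeping track of the signs $(-1)^k$. There is no conceptual difficulty — the hypotheses $\sigma\ne\pm2$ and $\tau\ne\pm2$ are there precisely so that the characteristic roots are distinct and the denominators $z-z^{-1}$, $w-w^{-1}$ do not vanish, making the closed forms valid. I would present the argument via the "both sides satisfy the same recurrence in $n$, and agree for $n=0,1$" reduction, since that replaces a long algebraic simplification with two short checks; the $m$-dependence is then symmetric and needs no separate treatment.
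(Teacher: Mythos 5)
Your proposal is correct and reaches the same identity, but it is organized differently from the paper's proof. The paper defines $\Del_{m,n}(z,w)=(z-z^{-1})(w-w^{-1})\det(D_{m,n;\sig,\tau}-H_{N;0})$ and proves $\Del_{m,n}=\gam_{m,n}$ by induction on $N=m+n$: after handling the five base cases $N\le 2$ directly, it uses the symmetry $(m,z)\leftrightarrow(n,w)$ to assume $m\ge 2$, expands the determinant along the top row to get the recurrence $\Del_{m,n}=(z+z^{-1})\Del_{m-1,n}-\Del_{m-2,n}$, and substitutes the inductive hypothesis. You instead regard the whole determinant as the last of the leading principal minors $d_k$, solve the constant-coefficient recurrence exactly on the first block to get $d_k=(-1)^k\frac{z^{k+1}-z^{-(k+1)}}{z-z^{-1}}$ for $k\le m$ (that closed form and its initial conditions check out), and then argue that the right-hand side of \eqref{detidentity}, viewed as a function of $n$, satisfies the same second-order recurrence $R_{n+1}=-\tau R_n-R_{n-1}$ as $d_{m+n}$ and agrees with it at $n=0,1$. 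I verified those two boundary identities and the recurrence on the $\gam$-side (the alternating sign $(-1)^{m+n}$ combines with $\gam_{m,n+1}=(w+w^{-1})\gam_{m,n}-\gam_{m,n-1}$ to give exactly $-\tau$), so the argument is sound. What your route buys: the ``same recurrence, same two initial values'' reduction replaces the algebraic simplification in the paper's inductive step with two short checks, and it exposes the two-block structure of the matrix more explicitly. What the paper's route buys: it treats $m$ and $n$ symmetrically from the start and never needs the explicit solution of the first-block recurrence, only the determinant expansion along one row. Either way, both hypotheses $\sig\ne\pm2$, $\tau\ne\pm2$ enter exactly as you say, to keep the characteristic roots distinct and the denominators nonzero.
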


\begin{proof}[Proof of Lemma \ref{sigtaulemma}]
We prove that
\begin{equation}
\Del_{m,n}(z,w)=(z-z^{-1})(w-w^{-1})
\det(D_{m,n;\sig,\tau}-H_{N;0})\label{Deldef}
\end{equation}
satisfies $\Del=\gam$ by induction on $N=m+n$. If $N=1,\, 2$ this may be proved by direct computations for the five cases, in which $(m,n)$ equals one of $(1,0)$, $(0,1)$, $(2,0)$, $(1,1)$, $(0,2)$. Let $N\geq 3$ and suppose that (\ref{detidentity}) is known for all smaller values of $N$. The lower bound $N\geq 3$ implies that $m\geq 2$ or $n\geq 2$. Using the symmetry $(m,z)\leftrightarrow (n,w)$, we reduce to the case $m\geq 2$.

By expanding the determinant in (\ref{Deldef}) along the top row one obtains
\[
\Del_{m,n}(z,w)=(z+z^{-1})\Del_{m-1,n}(z,w)
-\Del_{m-2,n}(z,w).
\]
The inductive hypothesis yields
\begin{eqnarray*}
\Del_{m,n}(z,w)&=&(z+z^{-1})
(z^{m}-z^{-m})(w^{n+1}-w^{-n-1})\\
&&-(z+z^{-1})
(z^{m-1}-z^{1-m})(w^{n}-w^{-n})\\
&&-(z^{m-1}-z^{1-m})(w^{n+1}-w^{-n-1})\\
&&+(z^{m-2}-z^{2-m})(w^{n}-w^{-n})\\
&=& \gam_{m,n}(z,w),
\end{eqnarray*}
as required to prove the inductive step.
\end{proof}

\begin{proof}[Proof of Theorem \ref{th:main1}]
\begin{enumerate}
\item[(a)] Returning to the pencil $H_{N;c}-\lambda D_{n,m}$, we construct its characteristic polynomial from the obvious identity
\begin{equation}\label{pdef}
p_{n,m;c}(\lam):=\det(H_{N;c}-\lam D_{n,m})=\det(H_{N;0}-D_{n,m;\lambda-c, -c-\lambda}).
\end{equation}
Using \eqref{pdef}, \eqref{zw_subs}, \eqref{betdef}, \eqref{gamdef}, and Lemma \ref{sigtaulemma}, we obtain
\[
\begin{split}
p_{n,m;c}(\lam)&=(-1)^{m+n}\frac{\gam_{m,n}(z,-w)}{(z-z^{-1})((-w)-(-w)^{-1})}\\
&=(-1)^m\frac{\beta_{m,n}(z,w)}{(z-z^{-1})(w-w^{-1})}.
\end{split}
\]
We know already by Lemma \ref{th:location_rough} that any eigenvalue of $\Ap_{m,n;c}$ lies in the disk $\{\lam\in\C:|\lam|<2+|c|\}$.
As the conditions $\lam\ne \pm 2\pm c$ ensure that the denominator $(z-z^{-1})(w-w^{-1})$ does not vanish, the zeros of $p_{n,m;c}(\lam)$ are given by those of $\beta_{m,n}(z,w)$ and vice versa.
\item[(b)] If $\lambda$ is real, so are, by \eqref{zw_subs}, both $z+z^{-1}$ and $w+w^{-1}$. The result follows as $\Im\left(\zeta+\zeta^{-1}\right)= \Im(\zeta)\left(1-|\zeta|^{-2}\right)$ for $\zeta\in\C$.
\item[(c)] If $\lambda$ is non-real, then by part (b), $z^m\ne z^{-m}$ and $w^n\ne w^{-n}$. Dividing \eqref{eq:beta0} through by $(z^m-z^{-m})(w^n-w^{-n})$, and rearranging with account of \eqref{F_def}, we arrive
at \eqref{F_condition}.
\end{enumerate}
\end{proof}

\section{Asymptotic behaviour of non-real eigenvalues --- rough analysis}
The aim of this section is to show that the non-real eigenvalues of the pencil $\cA_{m,n;c}$ converge to  the real axis if \emph{both} $n,m\to\infty$. We also obtain a rough estimate for the rate of convergence, show that this estimate in principle cannot be improved, and show that the condition that both $n$ and $m$ go to infinity is necessary. Finally, we study the behaviour of some eigenvalues lying on the imaginary axis.

Recall that by Theorem \ref{th:main1}(c), $\lambda\in\spec{\cA_{m,n;c}}\setminus\R$ implies that
\begin{equation}\label{eq:modFprod=1}
|F_m(z) F_n(w)|=1.
\end{equation}

 Our results are built upon some estimates of the function $F_m$. Define a family of real monotone-increasing functions $G_m:(0,+\infty)\to\R$ by
 \[
 G_m(s):=\er^s\tanh(ms).
 \]

 \begin{lemma}\label{lem:F_est1} If $z=\er^{s+\ir\theta}$ where $s>0$, then
 \begin{equation}\label{eq:Fest1}
 |F_m(z)|> G_m(s)=G_m(\log(|z|)).
 \end{equation}
 \end{lemma}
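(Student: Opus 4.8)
The plan is to estimate $|F_m(z)|$ directly from the definition \eqref{F_def}, writing $z = \er^{s+\ir\theta}$ and comparing the modulus of the numerator $z^{m+1}-z^{-m-1}$ against the modulus of the denominator $z^m - z^{-m}$. The natural first step is to record that for any complex $\zeta = \er^{a+\ir\phi}$ one has $\zeta^k - \zeta^{-k} = \er^{ka+\ir k\phi} - \er^{-ka-\ir k\phi}$, so that
\[
|\zeta^k - \zeta^{-k}|^2 = \er^{2ka} + \er^{-2ka} - 2\cos(2k\phi) = (\er^{ka}-\er^{-ka})^2 + 2(1-\cos(2k\phi)) = 4\sinh^2(ka) + 4\sin^2(k\phi).
\]
Applying this with $\zeta = z$, $a = s$, $\phi = \theta$ to both numerator ($k=m+1$) and denominator ($k=m$), the inequality \eqref{eq:Fest1} becomes equivalent to
\[
\sinh^2((m+1)s) + \sin^2((m+1)\theta) > \er^{2s}\tanh^2(ms)\bigl(\sinh^2(ms) + \sin^2(m\theta)\bigr).
\]

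The second step is to prove this by discarding the $\sin^2$ term on the left (it is nonnegative) and bounding the $\sin^2(m\theta)$ term on the right by using that $\er^{2s}\tanh^2(ms) \le \er^{2s}$ and $\sin^2(m\theta)\le 1$; so it would suffice to show $\sinh^2((m+1)s) \ge \er^{2s}\tanh^2(ms)\sinh^2(ms) + \er^{2s}$, i.e. $\sinh^2((m+1)s) \ge \er^{2s}\sinh^2(ms)\bigl(\tanh^2(ms) + \sinh^{-2}(ms)\bigr)$. Since $\tanh^2(ms) + \sinh^{-2}(ms) = \frac{\sinh^2(ms) + \cosh^2(ms)}{\cosh^2(ms)\sinh^2(ms)}\cdot\sinh^2(ms)$ — here it is cleaner to note $\tanh^2 x + 1/\sinh^2 x = (\cosh^2 x \sinh^2 x)^{-1}(\sinh^4 x + \cosh^2 x)$, which is awkward; instead I would split differently. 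The honest route: keep the numerator's $\sin^2$ term and observe that $\sinh((m+1)s) = \sinh(ms)\cosh s + \cosh(ms)\sinh s > \er^s \sinh(ms)\tanh(ms)$ is exactly the needed comparison for the first summand once one checks $\cosh(ms)\cosh s + \coth(ms)\sinh(ms)\sinh s \ge \er^s \sinh(ms)\tanh(ms)/\sinh(ms)\cdot\sinh(ms)$ — the cleanest form is to verify $\sinh((m+1)s) \ge \er^s\tanh(ms)\sinh(ms)$ and $\sinh((m+1)s)\ge \er^s\tanh(ms)$ separately, then combine via $A^2 + B^2 \ge$ (bound) when $A \ge P$ and $B' \ge Q$ with the right pairing. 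In fact $\sinh((m+1)s)\,\cosh(ms) \ge \er^s \sinh^2(ms)$ handles numerator-vs-$\sinh^2(ms)$ term, and $\sinh((m+1)s) \ge \er^s\tanh(ms) \ge \er^s\tanh(ms)|\sin(m\theta)|$ — no, the pairing must respect squares. The workable decomposition is: show $|z^{m+1}-z^{-(m+1)}| \ge \er^s\tanh(ms)\,|z^m - z^{-m}|$ by proving the stronger pointwise bound $|z^{m+1}-z^{-(m+1)}|^2 - \er^{2s}\tanh^2(ms)|z^m-z^{-m}|^2 > 0$ through the identity $z^{m+1}-z^{-(m+1)} = (z+z^{-1})(z^m - z^{-m}) - (z^{m-1}-z^{-(m-1)})$, expanding and using $|z+z^{-1}| \ge 2\cosh s \cdot (\text{something})$; this recursion is the same one used in the proof of Lemma~\ref{lem:Fanotherbound}(c).

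The cleanest overall strategy, and the one I would actually carry out, uses the recursion $F_{m+1}(z) = z + z^{-1} - 1/F_m(z)$ from Lemma~\ref{lem:Fanotherbound} together with an induction: assume $|F_m(z)| > G_m(s)$, and show $|F_{m+1}(z)| \ge |z + z^{-1}| - |F_m(z)|^{-1} > 2\cosh s - G_m(s)^{-1}$ is too lossy (it loses the phase of $z+z^{-1}$), so instead track the imaginary part or argument. Alternatively — and this is probably the shortest — parametrise $z^m - z^{-m}$ in hyperbolic-trigonometric form and bound $|F_m(z)|^2 = \dfrac{\sinh^2((m+1)s)+\sin^2((m+1)\theta)}{\sinh^2(ms)+\sin^2(m\theta)}$ from below by its value at the worst phase; the map $x \mapsto \dfrac{\sinh^2((m+1)s)+x}{\sinh^2(ms)+y}$ with $x,y\in[0,1]$ is minimised by taking $x=0$ (numerator smallest) and $y=1$ (denominator largest), giving $|F_m(z)|^2 \ge \dfrac{\sinh^2((m+1)s)}{\sinh^2(ms)+1} = \dfrac{\sinh^2((m+1)s)}{\cosh^2(ms) - \sinh^2(ms) + \sinh^2(ms)}$ — wait, $\sinh^2 + 1 = \cosh^2$ only up to the identity $\cosh^2 - \sinh^2 = 1$, so $\sinh^2(ms) + 1 = \cosh^2(ms)$ exactly. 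Hence $|F_m(z)|^2 \ge \sinh^2((m+1)s)/\cosh^2(ms)$, and it remains to check $\sinh((m+1)s)/\cosh(ms) > \er^s\tanh(ms) = \er^s\sinh(ms)/\cosh(ms)$, i.e. $\sinh((m+1)s) > \er^s\sinh(ms)$. But $\sinh((m+1)s) = \sinh(ms)\cosh s + \cosh(ms)\sinh s > \sinh(ms)\cosh s + \sinh(ms)\sinh s = \er^s\sinh(ms)$, using $\cosh(ms) > \sinh(ms)$. This completes the argument, with strict inequality throughout.

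**The main obstacle** is purely bookkeeping: choosing the right phase-independent lower bound for $|F_m(z)|^2$ and recognising the exact identity $\sinh^2(ms) + 1 = \cosh^2(ms)$ that collapses the denominator. Once that is in place, the final step $\sinh((m+1)s) > \er^s \sinh(ms)$ is immediate from the addition formula and $\cosh > \sinh$ on $(0,\infty)$. I would therefore write the proof as: (1) the $\sinh^2 + \sin^2$ formula for $|z^k - z^{-k}|^2$; (2) the phase-extremisation giving $|F_m(z)|^2 \ge \sinh^2((m+1)s)/\cosh^2(ms)$; (3) the one-line hyperbolic estimate finishing it off.
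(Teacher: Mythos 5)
Your final, clean argument is correct and is essentially the paper's own proof. Both reduce to the same intermediate bound $|F_m(z)|\ge \sinh((m+1)s)/\cosh(ms)$ and then conclude with the same elementary hyperbolic inequality $\sinh((m+1)s)>\er^{s}\sinh(ms)$; the only difference is that the paper obtains the intermediate bound in one line via the reverse triangle inequality on the numerator ($|z^{m+1}-z^{-m-1}|\ge |z|^{m+1}-|z|^{-m-1}$) and the triangle inequality on the denominator ($|z^m-z^{-m}|\le |z|^m+|z|^{-m}$), whereas you first derive the exact identity $|z^k-z^{-k}|^2=4\bigl(\sinh^2(ks)+\sin^2(k\theta)\bigr)$ and then extremise the phase terms, which amounts to the same two estimates. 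The several false starts earlier in your write-up are unnecessary and should be cut; the route you identify at the end is the right one.
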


 \begin{proof} We have
 \[
 \begin{split}
|F_m(z)|&= \left| \frac{z^{m+1}-z^{-m-1}}{z^m-z^{-m}} \right|
\geq \frac{|z|^{m+1}-|z|^{-m-1}}{|z|^m+|z|^{-m}}\\
&>\frac{\er^{(m+1)s}-\er^{-(m-1)s}}{\er^{ms} +\er^{-ms}}
=\frac{\er^s\sinh(ms)}{\cosh(ms)}=G_m(s).
\end{split}
\]
\end{proof}

\begin{corollary}\label{cor:FFlognlogm}
If $|z|\geq \exp(\log(m)/(2m))$ and
$|w|\geq \exp(\log(n)/(2n))$ then
\[
|F_m(z)F_n(w)|>1
\]
for all large enough $m,n$.
\end{corollary}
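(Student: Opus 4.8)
The plan is to apply Lemma \ref{lem:F_est1} to each factor and then show that the product of the resulting lower bounds exceeds $1$ for large $m,n$. By Lemma \ref{lem:F_est1}, writing $s = \log|z|$ and $t = \log|w|$, we have $|F_m(z)| > G_m(s) = \er^s \tanh(ms)$ and $|F_n(w)| > G_n(t) = \er^t \tanh(nt)$. Since $G_m$ is monotone increasing on $(0,\infty)$, the hypotheses $|z| \ge \exp(\log(m)/(2m))$ and $|w| \ge \exp(\log(n)/(2n))$ give $s \ge \log(m)/(2m)$ and $t \ge \log(n)/(2n)$, hence
\[
|F_m(z)F_n(w)| > G_m(s)\,G_n(t) \ge G_m\!\left(\tfrac{\log m}{2m}\right) G_n\!\left(\tfrac{\log n}{2n}\right).
\]
So it suffices to prove that $G_m(\log(m)/(2m)) \to 1^+$ from above (or at least is eventually $\ge 1$), and symmetrically for $G_n$; then the product of two quantities each eventually close to $1$ from above, and each strictly $>1$ once $m,n$ are large, will exceed $1$. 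Actually it is cleanest to show each factor is itself eventually $>1$: then the product is automatically $>1$.

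The key estimate is therefore: for all large $m$,
\[
G_m\!\left(\frac{\log m}{2m}\right) = \exp\!\left(\frac{\log m}{2m}\right)\tanh\!\left(\frac{\log m}{2}\right) > 1.
\]
To see this, expand each factor. First, $\exp(\log(m)/(2m)) = 1 + \frac{\log m}{2m} + O\!\left(\frac{(\log m)^2}{m^2}\right)$. Second, $\tanh(x) = 1 - 2\er^{-2x} + O(\er^{-4x})$ as $x \to \infty$, so with $x = \frac{\log m}{2}$ we get $\er^{-2x} = \er^{-\log m} = 1/m$, whence $\tanh(\log(m)/2) = 1 - \frac{2}{m} + O(1/m^2)$. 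Multiplying,
\[
G_m\!\left(\frac{\log m}{2m}\right) = \left(1 + \frac{\log m}{2m} + O\!\left(\tfrac{(\log m)^2}{m^2}\right)\right)\left(1 - \frac{2}{m} + O\!\left(\tfrac1{m^2}\right)\right) = 1 + \frac{\log m - 4}{2m} + O\!\left(\frac{(\log m)^2}{m^2}\right).
\]
Since $\log m - 4 > 0$ and dominates the error term once $m$ is large (say $m \ge 60$ comfortably, but any explicit threshold works), the right-hand side exceeds $1$. The same argument applies verbatim to $G_n(\log(n)/(2n))$ with $n$ in place of $m$. Hence for all sufficiently large $m$ and $n$ both factors exceed $1$, and therefore $|F_m(z)F_n(w)| > 1$.

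The only mild subtlety — and the one place to be careful rather than the "main obstacle," since the calculation is routine — is to make sure the asymptotic expansions are used with honest error bounds, i.e. that the $O$-terms are genuinely controlled by the positive main term $\frac{\log m - 4}{2m}$ for all $m$ beyond an explicit constant, rather than merely "in the limit." One should also note that the strict inequality $|F_m(z)| > G_m(s)$ in Lemma \ref{lem:F_est1} is not even needed here — the large-$m$ slack in $G_m(\log(m)/(2m)) > 1$ already does all the work — but it does no harm. No monotonicity beyond that of $G_m$ (already established in the statement preceding Lemma \ref{lem:F_est1}) is required.
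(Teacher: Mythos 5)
Your proposal is correct and follows essentially the same route as the paper's proof: both apply Lemma \ref{lem:F_est1} together with the monotonicity of $G_m$ to reduce to showing $G_m(\log(m)/(2m)) = \exp(\log(m)/(2m))\,\tanh(\log(m)/2) > 1$ for large $m$, and both verify this by the same leading-order expansion (the paper writes $\tanh(\log(m)/2)$ as $(m^{1/2}-m^{-1/2})/(m^{1/2}+m^{-1/2})$ where you use $\tanh(x)=1-2\er^{-2x}+O(\er^{-4x})$, but these are the same computation). Your closing remark about honest error control is a fair caveat that applies equally to the paper's own use of $(1+o(1))$ factors, and does not indicate any divergence in approach.
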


\begin{proof}
This follows directly from Lemma \ref{lem:F_est1} as
\[
\begin{split}
|F_m(z)|&>G_m(\log(|z|))\ge G_m\left(\frac{\log(m)}{2m}\right)\\
&= \exp\left(\frac{\log(m)}{2m}\right)
\frac{m^{1/2}-m^{-1/2}}{m^{1/2}+m^{-1/2}}\\
&=\left(1+\frac{\log(m)}{2m}(1+o(1))\right)
\left(1-\frac{2}{m}(1+o(1))\right)\\
&> 1
\end{split}
\]
for all large enough $m$, and from the same argument with $m$ replaced by $n$ and $z$ by $w$.
\end{proof}

We are now able to prove the main result of this section.

\begin{theorem}\label{th:crudebound}
The non-real eigenvalues of $\cA_{m,n;c}$ converge uniformly to the real axis as $n,m\to\infty$. More precisely,
\begin{equation}\label{eq:upperbounded}
\begin{split}
&\max\{ |\Im(\lam)|:\lam\in\spec{\cA_{m,n;c}}\}\\
&\qquad \leq\max\left\{\frac{\log(m)}{m}(1+o(1)), \frac{\log(n)}{n}(1+o(1))\right\}
\end{split}
\end{equation}
as $m,n\to\infty$.
\end{theorem}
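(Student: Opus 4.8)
The plan is to derive \eqref{eq:upperbounded} directly from Corollary~\ref{cor:FFlognlogm} by contraposition, combined with the identity \eqref{eq:modFprod=1} and the localisation result of Lemma~\ref{th:location_rough}. Suppose $\lambda\in\spec{\cA_{m,n;c}}$ is non-real, and let $z,w$ be the associated solutions of \eqref{eq:zeq}, \eqref{eq:weq} satisfying \eqref{eq:z,w>1}. By Theorem~\ref{th:main1}(c) we have $F_m(z)F_n(w)=-1$, so in particular $|F_m(z)F_n(w)|=1$. Corollary~\ref{cor:FFlognlogm} tells us that, for all sufficiently large $m,n$, if both $|z|\ge\exp(\log(m)/(2m))$ and $|w|\ge\exp(\log(n)/(2n))$ then $|F_m(z)F_n(w)|>1$, a contradiction. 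Hence for large $m,n$ at least one of $|z|<\exp(\log(m)/(2m))$ or $|w|<\exp(\log(n)/(2n))$ must hold. Since $|z|>1$ and $|w|>1$ always, this means $\log|z|$ and $\log|w|$ are controlled: one of them is less than $\log(m)/(2m)$, the other is at least bounded by whatever crude a~priori bound we have (the disk bound $|\lambda|<2+|c|$ from Lemma~\ref{th:location_rough}(b) keeps $|z|,|w|$ bounded above by a constant).

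Next I would translate smallness of $\log|z|$ into smallness of $|\Im(\lambda)|$. Write $z=\er^{s+\ir\theta}$ with $s=\log|z|>0$. From \eqref{zw_subs}, $\lambda-c=z+z^{-1}$, so $\Im(\lambda)=\Im(z+z^{-1})=\Im(z)(1-|z|^{-2})=\sin\theta\,(\er^s-\er^{-s})=2\sin\theta\sinh s$. Thus $|\Im(\lambda)|\le 2\sinh s\le 2s(1+o(1))$ for small $s$. The analogous computation using $\lambda+c=w+w^{-1}$ gives $|\Im(\lambda)|\le 2\log|w|\,(1+o(1))$. Since at least one of $\log|z|<\log(m)/(2m)$ or $\log|w|<\log(n)/(2n)$ holds, we conclude
\[
|\Im(\lambda)|\le\max\left\{\frac{\log(m)}{m}(1+o(1)),\frac{\log(n)}{n}(1+o(1))\right\},
\]
absorbing the factor $2\cdot\tfrac12=1$ into the estimate; taking the maximum over all non-real $\lambda\in\spec{\cA_{m,n;c}}$ and noting real eigenvalues contribute $\Im(\lambda)=0$ gives \eqref{eq:upperbounded}. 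Uniform convergence to the real axis is then immediate, since the right-hand side tends to $0$ as $m,n\to\infty$.

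The one point requiring a little care — and the likeliest source of a subtle gap — is the interplay between the two alternatives: Corollary~\ref{cor:FFlognlogm} only excludes the case where \emph{both} $|z|$ and $|w|$ are large, so we genuinely only get that \emph{one} of $s=\log|z|$, $\log|w|$ is small, while the other could be of order one. This is exactly why the bound is a maximum rather than a sum or a product, and why the hypothesis that \emph{both} $m$ and $n$ go to infinity is needed: if, say, $m$ stays bounded, the factor $\log(m)/m$ does not vanish, so the argument (correctly) fails to force $\Im(\lambda)\to 0$ — consistent with the remark after the theorem statement that this necessity is discussed later. I should also confirm that the $o(1)$ terms in Corollary~\ref{cor:FFlognlogm} and in $\sinh s\sim s$ are uniform in $\theta$ (they are, since the estimates in Lemma~\ref{lem:F_est1} depend only on $|z|$), so that the final bound is genuinely uniform over the spectrum.
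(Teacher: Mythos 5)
Your proof is correct and follows essentially the same route as the paper's: apply Theorem~\ref{th:main1}(c) to get $|F_m(z)F_n(w)|=1$, use the contrapositive of Corollary~\ref{cor:FFlognlogm} to force at least one of $\log|z|<\log(m)/(2m)$ or $\log|w|<\log(n)/(2n)$, and translate this into a bound on $|\Im(\lambda)|$ via $z=\er^{s+\ir\theta}$ and $\Im(\lambda)=2\sin\theta\sinh s$. You have in fact stated the contrapositive more carefully than the paper does (its displayed \eqref{eq:logboundz}, \eqref{eq:logboundw} appear to have the inequality signs reversed, though the subsequent deduction $0<s<\log(m)/(2m)$ makes clear they intend exactly what you wrote), and your closing remarks on uniformity in $\theta$ and on why only one of $z,w$ need be small are correct and apt.
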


\begin{proof} Suppose that $\lam\in\spec{\cA_{m,n;c}}\setminus\R$. Then, by Theorem \ref{th:main1}(c), $|F_m(z)F_n(w)|=1$, and by Corollary
\ref{cor:FFlognlogm} we have either
\begin{equation}\label{eq:logboundz}
|z|\geq \exp(\log(m)/(2m)),
\end{equation}
or
\begin{equation}\label{eq:logboundw}
|w|\geq \exp(\log(n)/(2n)).
\end{equation}

Suppose that \eqref{eq:logboundz} holds. Setting $z=\er^{s+\ir\theta}$ and using \eqref{eq:z,w>1} and Theorem \ref{th:main1}(b), we arrive at
\[
0<s<\frac{\log m}{2m}.
\]
Therefore, by  \eqref{zw_subs},
\begin{equation}\label{eq:logboundm}
|\Im(\lam)|=|\left(\rme^s-\rme^{-s}\right)
\sin(\theta)| \leq 2|\sinh(s)|\leq\frac{\log(m)}{m}(1+o(1))
\end{equation}
as $m\to\infty$.

If we assume \eqref{eq:logboundw} instead, we use $w$ instead of $z$ and arrive by the same argument at
\begin{equation}\label{eq:logboundn}
|\Im(\lam)|\leq\frac{\log(n)}{n}(1+o(1))
\end{equation}
as $n\to\infty$.

The result now follows by combining \eqref{eq:logboundm} and \eqref{eq:logboundn}.
\end{proof}

The next lemma provides a useful factorisation of $\beta_{m,m}(z,z)$, which we shall use on numerous occasions.

\begin{lemma}\label{lem:beta_factorisation} Define
\begin{align}
r_m^{(1)}(z)&=(z+\ir)z^{2m+1}-\ir(z-\ir),\label{r1def}\\
r_m^{(2)}(z)&= (z-\ir)z^{2m+1}+\ir(z+\ir).\label{r2def}
\end{align}
Then $\beta_{m,m}(z,z)=0$ if and only if either $r_m^{(2)}(z)=0$ or $r_m^{(2)}(\overline{z})=0$.
\end{lemma}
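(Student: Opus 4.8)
The plan is to start from the explicit definition of $\beta_{m,m}(z,z)$ obtained by setting $w=z$, $n=m$ in \eqref{betdef}, namely
\[
\beta_{m,m}(z,z)=\left(z^{m+1}-z^{-m-1}\right)^2+\left(z^{m}-z^{-m}\right)^2,
\]
and to multiply through by $z^{2m+2}$ (a non-zero factor for $z\ne 0$, and $z=0$ is not a zero of $r_m^{(2)}$ either, so nothing is lost) to clear denominators. This gives a genuine polynomial in $z$, and the key algebraic observation is that a sum of two squares $A^2+B^2$ factors over $\C$ as $(A+\ir B)(A-\ir B)$. So I would write $\beta_{m,m}(z,z)=0$ as
\[
\left(z^{2m+2}-1\right)^2 + \left(z^{2m+1}-z\right)^2 \cdot \text{(appropriate power bookkeeping)} = 0,
\]
being careful with the powers of $z$ pulled out, and then factor the left-hand side as a product of two polynomials, each of degree $2m+2$.

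The second step is to identify those two factors with $r_m^{(1)}$ and $r_m^{(2)}$ as defined in \eqref{r1def}--\eqref{r2def}. After pulling out $z^{2m+2}$ one has $A=z^{2m+2}-1$ from the first square (times $z^{\pm}$ corrections) and $B=z\,(z^{2m}-1)$ or similar from the second; grouping $A\pm\ir B$ and collecting the coefficient of $z^{2m+1}$ versus the constant term should produce exactly $(z\pm\ir)z^{2m+1}\mp\ir(z\mp\ir)$ up to an overall constant. This is a routine but slightly fiddly matching of coefficients; I expect this bookkeeping — getting the powers of $z$ and the signs of $\ir$ to line up so that the two factors are precisely $r_m^{(1)}(z)$ and $r_m^{(2)}(z)$ — to be the main (and really the only) obstacle, and it is purely computational.

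The third step is to dispose of $r_m^{(1)}$ in favour of $r_m^{(2)}(\overline z)$. Observe that $r_m^{(1)}$ and $r_m^{(2)}$ are related by conjugation of coefficients: replacing $\ir$ by $-\ir$ in $r_m^{(2)}(z)$ turns it into $r_m^{(1)}(z)$. Hence $r_m^{(1)}(z)=\overline{r_m^{(2)}(\overline z)}$ for all $z\in\C$, so $r_m^{(1)}(z)=0 \iff r_m^{(2)}(\overline z)=0$. Combining this with the factorisation from steps one and two yields
\[
\beta_{m,m}(z,z)=0 \iff r_m^{(1)}(z)r_m^{(2)}(z)=0 \iff r_m^{(2)}(\overline z)=0 \ \text{ or }\ r_m^{(2)}(z)=0,
\]
which is exactly the claim. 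Finally I would note that the excluded value $z=0$ causes no trouble: $\beta_{m,m}$ is singular there rather than zero, and $r_m^{(2)}(0)=\ir(\ir)\cdot(\pm1)\ne 0$, so the stated equivalence holds without qualification on the domain where the substitution \eqref{zw_subs} is meaningful.
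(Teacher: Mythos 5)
Your proposal is correct and takes essentially the same approach as the paper: you factor $z^{2m+2}\beta_{m,m}(z,z)$ into the product $r_m^{(1)}(z)\,r_m^{(2)}(z)$ via the complex sum-of-squares identity $A^2+B^2=(A+\ir B)(A-\ir B)$, and then use the conjugate relation $r_m^{(1)}(z)=\overline{r_m^{(2)}(\overline z)}$ to convert the condition $r_m^{(1)}(z)=0$ into $r_m^{(2)}(\overline z)=0$. The paper records exactly this factorisation and exactly this conjugate relation (verified ``by a direct calculation''); your sum-of-squares derivation is just a transparent way of doing that calculation, so the two arguments are the same.
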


\begin{proof}
One may re-write \eqref{eq:beta0} in the form
\begin{equation}
\beta_{m,m}(z,z)=z^{-2m-2}r_m^{(1)}(z)r_m^{(2)}(z)=0.
\label{qtildemmz}
\end{equation}

The functions $r^{(j)}(z)$, $j=1,2$, defined by \eqref{r1def}, \eqref{r2def}, satisfy
\begin{equation}
\overline{r_m^{(2)}(z)}=r_m^{(1)}(\overline{z}),
\label{r1r2formula}
\end{equation}
so $r_m^{(1)}(\overline{z})=0$ if and only if $r_m^{(2)}(z)=0$. Moreover
\begin{equation}
r_m^{(j)}(z^{-1})=-z^{-2m-2} r_m^{(j)}(z),
\label{r1r2formula2}
\end{equation}
for $j=1,\, 2$, so $r_m^{(j)}(z^{-1})=0$ if and only if $r_m^{(j)}(z)=0$.
The formulae \eqref{qtildemmz}--\eqref{r1r2formula2} are checked by a direct calculation.
\end{proof}

The following lemma shows that the upper bound
\eqref{eq:upperbounded} is (in a sense) optimal. The decision to focus on purely imaginary $z$ in the theorem was based on numerical experiments, which indicate that such $z$ provide the greatest imaginary parts of the eigenvalues $\lam$.
We consider the case $m=n$ and $c=0$, so that $z=w$,  and show that the bound \eqref{eq:upperbounded} is attained by considering the odd values of $m$.

\begin{lemma}\label{z=iybound}
Suppose that $c=0$, $m=n$, $m$ is odd, and $z=w=\ir y$. The equation $r_m^{(2)}(\ir y)=0$ has  four solutions $y\in\R\setminus \{0\}$, symmetric with respect to zero, exactly one of which lies in $(1,\infty)$. That solution satisfies
\begin{equation}
y=1+\frac{\log(m)}{2m}(1+o(1))
\label{sharpybound}
\end{equation}
as $m\to\infty$. The corresponding eigenvalue $\lam\in \ir(0,\infty)$ of the pencil $\cA_{m,m;0}$ satisfies
\begin{equation}
\Im(\lam)=\frac{\log(m)}{m}(1+o(1))
\label{sharplambound}
\end{equation}
as $m\to\infty$.

If $m$ is even, then there are no solutions of $r_m^{(2)}(\ir y)=0$ for $y\in\R\setminus \{0\}$, and therefore no purely imaginary eigenvalues of $\cA_{m,m;0}$.
\end{lemma}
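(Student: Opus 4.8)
The plan is to reduce everything to the elementary analysis of one real polynomial. Since $c=0$ forces $w=z$, and since $\ir^{2m+1}=(-1)^m\ir=-\ir$ for odd $m$, evaluating \eqref{r2def} at $z=\ir y$ gives the real polynomial
\[
p_m(y):=r_m^{(2)}(\ir y)=(y-1)\,y^{2m+1}-(y+1),\qquad y\in\R .
\]
For $\ir y$ to yield an eigenvalue one needs $\beta_{m,m}(\ir y,\ir y)=0$, and by Lemma~\ref{lem:beta_factorisation} (with $\overline{\ir y}=-\ir y$) this is equivalent to $p_m(y)=0$ or $p_m(-y)=0$; moreover, by \eqref{r1r2formula2} with $j=2$, the substitution $y\mapsto -1/y$ (i.e.\ $z\mapsto z^{-1}$ on the imaginary axis) preserves the zero set of $p_m$. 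So everything hinges on the zeros of $p_m$ on $(0,\infty)$.

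First I would run the sign/monotonicity analysis of $p_m$. On $(0,1)$ one has $(y-1)y^{2m+1}<0<y+1$, so $p_m<0$ and there is no root. On $[1,\infty)$ we have $p_m(1)=-2<0$, $p_m(y)\to+\infty$ as $y\to\infty$, and $p_m'(y)=y^{2m}\bigl((2m+2)y-(2m+1)\bigr)-1>y^{2m}-1>0$ for $y>1$, so $p_m$ is strictly increasing there; hence there is exactly one positive zero $y_0$, and $p_m(2)=2^{2m+1}-3>0$ forces $y_0\in(1,2)$. Combining with the symmetry $y\mapsto -1/y$ and with $p_m(-\cdot)$, the full solution set is $\{\,y_0,\,-1/y_0,\,-y_0,\,1/y_0\,\}$: four distinct reals, symmetric about $0$, with $y_0$ the unique one in $(1,\infty)$. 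To turn these into eigenvalues I would invoke Theorem~\ref{th:main1}(a) with $\lambda:=\ir y_0+(\ir y_0)^{-1}=\ir\bigl(y_0-y_0^{-1}\bigr)\notin\R$ (so $\lambda\ne\pm2$): the two roots $\ir y_0$ and $-\ir/y_0$ of \eqref{eq:zeq} have moduli $y_0>1>1/y_0$, so \eqref{eq:z,w>1} selects $z=w=\ir y_0$, and since $\beta_{m,m}(\ir y_0,\ir y_0)=0$, $\lambda$ is an eigenvalue; the four $y$-values then yield exactly the two eigenvalues $\pm\ir(y_0-y_0^{-1})$, with $y_0$ responsible for the one in $\ir(0,\infty)$.

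For the asymptotics I would set $y_0=\er^{s_0}$ with $s_0=\log y_0\in(0,\log2)$, so that $p_m(y_0)=0$ becomes $y_0^{2m+1}=(y_0+1)/(y_0-1)$, i.e.
\[
(2m+1)s_0=\log\frac{\er^{s_0}+1}{\er^{s_0}-1}=\log2-\log s_0+O(s_0^2).
\]
A two-step bootstrap would then finish it: first $s_0\to0$ (otherwise, along a subsequence with $s_0\in[\delta,\log2]$, the right-hand side stays bounded while the left-hand side diverges); then, with $L:=-\log s_0\to+\infty$, the relation reads $L=\log(2m)-\log L+O(1)$, which forces $L=(\log m)(1+o(1))$ and hence $s_0=\frac{\log m}{2m}(1+o(1))$. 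Consequently $y_0=\er^{s_0}=1+s_0(1+o(1))=1+\frac{\log m}{2m}(1+o(1))$, which is \eqref{sharpybound}, and $\Im(\lambda)=y_0-y_0^{-1}=2\sinh s_0=2s_0(1+o(1))=\frac{\log m}{m}(1+o(1))$, which is \eqref{sharplambound}.

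For even $m$ one has $\ir^{2m+1}=\ir$, so the same evaluation gives $r_m^{(2)}(\ir y)=-\bigl(y^{2m+1}(y-1)+(y+1)\bigr)$ and, via \eqref{r1r2formula}, $r_m^{(1)}(\ir y)=-\bigl(y^{2m+1}(y+1)-(y-1)\bigr)$. Both bracketed polynomials are strictly positive on all of $\R$, which I would check by splitting $\R$ into $(-\infty,-1),(-1,0),(0,1),(1,\infty)$ and using $|y|^{2m+1}\le1$ on $[-1,1]$ and $|y|^{2m+1}\ge1$ outside; each case is then a one-line estimate (e.g.\ on $(0,1)$, $y^{2m+1}(1-y)<1-y<1+y$). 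Hence $\beta_{m,m}(\ir y,\ir y)\ne0$ for every real $y\ne0$, so $\cA_{m,m;0}$ has no purely imaginary eigenvalue. I expect the only non-routine step to be the asymptotic bootstrap — deducing $L=(\log m)(1+o(1))$ from $L=\log(2m)-\log L+O(1)$ and carrying the $o(1)$'s cleanly through $y_0=\er^{s_0}$ — while the rest is polynomial sign analysis together with Theorem~\ref{th:main1}(a).
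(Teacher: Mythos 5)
Your argument is correct and follows the paper's overall scheme: evaluate $r_m^{(2)}$ at $\ir y$ to get the real polynomial $p_m(y)=(y-1)y^{2m+1}-(y+1)$, isolate the unique positive zero $y_0$ by monotonicity on $[1,\infty)$, recover the remaining $y$-values via \eqref{r1r2formula} and \eqref{r1r2formula2}, pass to the eigenvalue via Theorem~\ref{th:main1}(a), and treat even $m$ by a sign estimate. One remark: you implicitly (and correctly) read the ``four solutions'' as solutions of $\beta_{m,m}(\ir y,\ir y)=0$, i.e.\ $p_m(y)=0$ or $p_m(-y)=0$, rather than of $r_m^{(2)}(\ir y)=0$ alone. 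The latter is a real polynomial of degree $2m+2$ with coefficient sign pattern $+,-,0,\dots,0,-,-$, so by Descartes' rule it has exactly one positive and one negative real zero, namely $y_0$ and $-1/y_0$; the other two, $-y_0$ and $1/y_0$, are zeros of $r_m^{(1)}(\ir\,\cdot)$, carried over by \eqref{r1r2formula}. This is a small imprecision in the lemma statement itself, not a gap in your proof, and the paper's own proof uses exactly the same bookkeeping.

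Where you genuinely differ from the paper is the asymptotics for $y_0$. The paper evaluates $f_m(y)=y^{-1}p_m(y)$ at the explicit test points $\exp(\log m/(2m))$ and $\exp\bigl(\tfrac{\log m}{2m}(1-(\log m)^{-1/2})\bigr)$ and checks the signs directly, sandwiching $y_m$ between them. You instead rewrite $p_m(y_0)=0$ as the implicit equation $(2m+1)s_0=\log\coth(s_0/2)=\log 2-\log s_0+O(s_0^2)$ in $s_0=\log y_0$ and bootstrap: first $s_0\to0$ (else the left side diverges while the right side is bounded since $s_0<\log 2$), then $L:=-\log s_0$ satisfies $L=\log(2m)-\log L+o(1)$, which forces $L=(\log m)(1+o(1))$ and hence $s_0=\tfrac{\log m}{2m}(1+o(1))$. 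Both arguments are valid; yours is somewhat cleaner because it exposes the $\log\log$ correction structure directly and does not require guessing the ad hoc correction $\delta_m=(\log m)^{-1/2}$. For the even-$m$ case you verify the strict sign of $r_m^{(2)}(\ir y)$ (and, redundantly, of $r_m^{(1)}(\ir y)$) on all of $\R$, whereas the paper only writes out $y\ge0$ and appeals to the symmetries of Lemma~\ref{lem:beta_factorisation} for the rest; either is a complete argument.
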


\begin{proof}
An elementary calculation shows that
\begin{equation}\label{eq:r2iy}
r_m^{(2)}(\ir y)= (-1)^{m+1}(y-1)y^{2m+1}-(y+1),\qquad\text{for all }m\in\N, y\in\R.
\end{equation}
Thus, if $m$ is odd,
\[
r_m^{(2)}(\ir y)= (y-1)y^{2m+1}-(y+1)
\]
Direct calculations show that $0,1,-1$ are not, in this case, solutions of $r_m^{(2)}(\ir y)=0$. If we put
\[
f_m(y)=y^{-1}r_m^{(2)}(\ir y)=(y-1)y^{2m}-1-\frac{1}{y}
\]
then $f_m$ is strictly monotonic increasing on $[1,\infty)$ with $f_m(1)<0$ and $f_m(y)\to +\infty$ as $y\to +\infty$. Therefore $f_m(y)=0$ has a unique solution in $(1,\infty)$, which we denote by $y_m$.

If $m\geq 3$ and $y=\rme^{\log(m)/2m}$ then $y>1$, $y^{2m}=m$ and
\begin{eqnarray*}
f_m(y)&>& (y-1) m-2\\
&>& \frac{\log(m)}{2m} m-2\\
&=& \frac{1}{2}(\log(m) -4)\\
&>&0
\end{eqnarray*}
provided $m\geq 55$. Therefore
\[
y_m<\exp\left(\frac{\log(m)}{2m}\right)
\]
for all such $m$.

Let
\[
y=\exp\left(\frac{\log(m)}{2m}(1-\del_m)\right)
\]
where $\del_m=(\log(m))^{-1/2}$ and $m\geq 3$. Then $y>1$ and $y^{2m}=\rme^{\log(m)(1-\del_m)}$. Therefore
\begin{eqnarray*}
f_m(y)&<& (y-1)\rme^{\log(m)(1-\del_m)}-1\\
&<& \frac{\log(m)}{2m}(1-\del_m)(1+o(1))\cdot m
\rme^{-(\log(m))^{-1/2}}-1\\
&<&  \frac{\log(m)}{2}(1-\del_m)(1+o(1))\cdot
\frac{3!}{(\log(m))^{3/2}}-1\\
&=& 3(\log(m))^{-1/2}(1-\del_m)(1+o(1))-1\\
&<&0
\end{eqnarray*}
for all large enough $m$. Therefore
\[
y_m>\exp\left(\frac{\log(m)}{2m}
(1-\del_m)\right)
\]
for all large enough $m$.

We have now proved upper and lower bounds on $y=y_m$ which together imply (\ref{sharpybound}). The proof of (\ref{sharplambound}) follows directly from the formula $\lam=\ir(y-y^{-1})$.\\
The locations of the other three solutions $y\in\R\backslash \{0\}$
are determined by using (\ref{r1r2formula}) and
(\ref{r1r2formula2}).

To prove the last statement of the lemma, we use again \eqref{eq:r2iy}, which for even $m$ becomes
\[
r_m^{(2)}(\ir y)= -(y-1)y^{2m+1}-(y+1).
\]
This immediately yields the bounds
\[
 r_m^{(2)}(\ir y)\le
 \begin{cases}
 -2y,\qquad&\text{if }0\le y<1,\\
 -1,\qquad&\text{if }y\ge 1,
 \end{cases}
 \]
 and so  $r_m^{(2)}(\ir y)< 0$ for $y>0$.
 As $r_m^{(2)}(0)=-1$, we deduce by Lemma \ref{lem:beta_factorisation} that $r_m^{(2)}(\ir y)=0$ has no solutions $y\in\R$.
\end{proof}

The last result of this section shows that the condition that \emph{both} $m$ and $n$ go to infinity is essential for the convergence of non-real eigenvalues to the real axis.

\begin{lemma}\label{lem:n=1}
Let $n=1$, $c=0$. Then there exists a sequence of purely imaginary eigenvalues $\lambda_m\in\spec{\cA_{m,1;0}}$ such that
\[
\liminf_{m\to\infty}\Im(\lambda_m)\ge\frac{9}{20}.
\]
\end{lemma}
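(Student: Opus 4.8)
The plan is to specialise the characterisation of Theorem~\ref{th:main1}(a) to the case $n=1$, $c=0$, where $z=w$, and then exhibit a purely imaginary root. With $n=1$ the function $\beta_{m,1}(z,z)$ is a low-degree-in-the-tail polynomial, so I would first write out $\beta_{m,1}(z,z)=(z^{m+1}-z^{-m-1})(z^2-z^{-2})+(z^m-z^{-m})(z-z^{-1})$ explicitly, clear denominators, and factor out the elementary symmetric pieces $(z-z^{-1})$ to reduce \eqref{eq:beta0} to a genuine polynomial equation. Alternatively, and more cleanly, I would invoke Lemma~\ref{lem:beta_factorisation} only as a template and instead use Theorem~\ref{th:main1}(c): for a non-real eigenvalue with $z=w$ we need $F_m(z)F_1(z)=-1$, i.e. $F_m(z)\,(z+z^{-1})=-1$ since $F_1(z)=z+z^{-1}$. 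So the equation to solve is $(z^{m+1}-z^{-m-1})(z+z^{-1})=-(z^m-z^{-m})$.

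Next I would look for a root on the imaginary axis, $z=\ir y$ with $y>1$ real (guaranteed to give $\lam=\ir(y-y^{-1})$ purely imaginary via \eqref{zw_subs}). Substituting $z=\ir y$ into the displayed equation and using $\ir^{m+1}$, $\ir^m$ to collect powers, the equation becomes a real polynomial relation in $y$ of the shape (up to an overall sign depending on the parity of $m$) $(y^{m+1}+y^{-m-1})(y-y^{-1}) = \pm (y^m - y^{-m})$ or a close variant; I would track the parity carefully exactly as in the proof of Lemma~\ref{z=iybound} (equation \eqref{eq:r2iy}). The key point is that for the favourable parity this reduces, after multiplying through by a suitable power of $y$, to something like $g_m(y):=(y^2-1)y^{2m+1} - (\text{lower order}) = 0$ with $g_m$ continuous, negative at $y=1$, and tending to $+\infty$; an intermediate value argument then produces a root $y_m\in(1,\infty)$. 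Unlike in Lemma~\ref{z=iybound}, where $y_m\to 1$ because the "gain" factor was $y^{2m}$ with $m\to\infty$, here the tail exponent is fixed ($n=1$), so $y_m$ does \emph{not} collapse to $1$; instead $y_m$ converges to the root $y_\infty\in(1,\infty)$ of an explicit limiting equation (heuristically $(y^2-1)=$ small, solved to leading order), and I would pin down $y_\infty$ numerically/elementarily and check that $\Im(\lam_m)=y_m-y_m^{-1}$ exceeds $9/20$ in the limit. The constant $9/20$ is deliberately a safe underestimate, so once I have the limiting equation I just need a crude rigorous lower bound on its relevant root, e.g. by evaluating $g_\infty$ at a convenient rational value of $y$ and using monotonicity.

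The main obstacle, I expect, is bookkeeping rather than depth: correctly handling the parity of $m$ when substituting $z=\ir y$ (so that one gets a \emph{real} equation with a sign that may alternate with $m$), and then verifying that for the "wrong" parity one still gets purely imaginary eigenvalues or, if not, restricting the sequence $\lambda_m$ to the good parity — the statement only claims $\liminf$ over \emph{a} sequence, so working along, say, odd $m$ (or even $m$) suffices. A secondary technical point is ruling out that the candidate $z=\ir y$ produces a spurious solution at one of the excluded points $\lam\in\{\pm2\}$ (here $c=0$); since $y>1$ strictly we have $z\ne z^{-1}$ and $|z|\ne 1$, so the denominators in Theorem~\ref{th:main1} are non-zero and the excluded set is avoided. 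Finally I would convert the bound on $y_m$ into the claimed bound on $\Im(\lambda_m)$ via $\lambda_m=\ir(y_m-y_m^{-1})$ and conclude.
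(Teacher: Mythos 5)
Your overall strategy is the same as the paper's: substitute $z=w=\ir y$, reduce Theorem~\ref{th:main1} to a real polynomial equation in $y$, and locate a root $y_m>1$ by an intermediate value argument, then convert via $\Im(\lambda_m)=y_m-y_m^{-1}$. The paper works directly with $\beta_{m,1}(\ir y,\ir y)$, extracts the real polynomial $g_m(y)=(-1)^m y^{2m+4}-2(-1)^m y^{2m+2}+2y^2-1$, shows $g_m(5/4)\,g_m(3/2)<0$ for $m>3$, and hence finds $y_m\in(5/4,3/2)$ giving $\Im(\lambda_m)>5/4-4/5=9/20$ for every $m>3$ (no parity restriction is needed: both signs $(-1)^m$ produce a sign change on that interval, just in opposite directions).

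Two details in your sketch would need correcting in execution. First, the dominant factor is $(y^2-2)\,y^{2m+2}$, not $(y^2-1)\,y^{2m+1}$; if one literally took ``$y^2-1$ small, to leading order'' the putative limit would be $y_\infty=1$, giving $\Im(\lambda_m)\to0$, which contradicts the statement. The correct balance forces $y_m\to\sqrt2$ (as the paper's remark after the lemma notes), and $\Im(\lambda_m)\to 1/\sqrt2\approx0.707$. Second, your assertion that $g_m$ is ``negative at $y=1$ and tending to $+\infty$'' holds only for $m$ even; for $m$ odd one has $g_m(1)=2>0$ and $g_m(y)\to-\infty$. The IVT argument still works for both parities, but with the sign change in the opposite direction, so the paper sidesteps the parity bookkeeping altogether by checking the product $g_m(5/4)g_m(3/2)$. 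Apart from these sign/exponent slips, your proposal would reproduce the paper's proof.
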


\begin{proof} We act similarly to the proof of Lemma \ref{z=iybound} and consider
\[
\beta_{m,1}(\ir y,\ir y)=\ir^{m+1}y^{-3-m}\left((-1)^m y^{2m+4}-2(-1)^my^{2m+2}+2y^2-1\right)
\]
for $y\in\R\setminus\{0\}$. The $y$-zeros of $\beta_{m,1}(\ir y,\ir y)$ coincide with those of
\[
g_m(y):=(-1)^m y^{2m+4}-2(-1)^my^{2m+2}+2y^2-1.
\]
Consider the quantity
\[
g_m\left(\frac{5}{4}\right)g_m\left(\frac{3}{2}\right)=\left(\frac{17}{8}-7 (-1)^m 2^{-4(2+m)}5^{2+m}\right)\cdot\left(\frac{7}{2}+(-1)^m 9^{1+m} 2^{-2(2+m)}\right)
\]
It is easily checked that for $m>3$
\[
g_m\left(\frac{5}{4}\right)g_m\left(\frac{3}{2}\right)<0,
\]
and therefore there exists a zero $y_m\in\left(\frac{5}{4},\frac{3}{2}\right)$ of $g_m(y)$. Setting $\lambda_m=\ir y_m+(\ir y_m)^{-1}$, we have
\[
\Im(\lambda_m)=y_m-(y_m)^{-1}> \frac{5}{4}-\frac{4}{5}=\frac{9}{20}.
 \]
\end{proof}

\begin{remark} It is possible to show that for the sequence $\{y_m\}$ constructed in the proof of Lemma \ref{lem:n=1} one actually has $\lim_{m\to\infty} y_m=\sqrt{2}$ and so for the corresponding eigenvalues
$\lim_{m\to\infty}\lambda_m =\ir/ \sqrt{2}$. Indeed, re-write, for $y>1$,
\[
g_m(y)=(-1)^m\,y^{2m+2}(y+\sqrt{2})\,\left(y-\sqrt{2}+(-1)^m\,(2y^{-2m}-y^{-2m-2})\,(y+\sqrt{2})^{-1}\right).
\]
As $\{(-1)^m\,(2y^{-2m}-y^{-2m-2})\,(y+\sqrt{2})^{-1}\}$ converges to zero in $C^1$ for any sufficiently small  interval around $\sqrt{2}$ as $m\to\infty$, the result follows.
\end{remark}

\section{The case $c=0$, $m=n$}

In this section, we consider possibly the simplest case, namely $c=0$, $m=n$. We shall denote for simplicity
\begin{equation}\label{eq:A_moperatordef}
\cA_{m}:=\cA_{m,m;0}(\lambda)=H_{N}-\lambda D_{m,m},
\end{equation}
where we use the shorthand notation
\[
N=2m,\qquad H_{N}=H_{m,m;0}.
\]

Note that  $c=0$ implies
\begin{equation}\label{eq:z,c=0}
\lambda=z+\frac{1}{z}=w+\frac{1}{w},
\end{equation}
and we can take $w=z$.

In this case, the spectrum of the pencil has some additional symmetries.
\begin{lemma}\label{th:c0symm}
Let $n=m$, $c=0$. Then, in addition to the results of Theorem \ref{th:location_rough}, the spectrum $\spec{\cA_m}$ is invariant under
the symmetry $\lambda\to -\lambda$ and therefore $\spec{\cA_m}$ is invariant under reflections about the real and imaginary axes.
\end{lemma}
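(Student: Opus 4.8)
The plan is to realise the reflection $\lambda\to-\lambda$ at the level of matrices, by conjugating the pencil with an explicit sign matrix. Set $J$ to be the diagonal $N\times N$ matrix with entries $J_{r,r}=(-1)^{r}$; then $J=J^{-1}$, $\det(J)=\pm1$, and $(JAJ)_{r,s}=(-1)^{r+s}A_{r,s}$ for every $N\times N$ matrix $A$. The point is that conjugation by $J$ flips the sign of every super- or sub-diagonal entry of a tridiagonal matrix (there $r+s$ is odd) while fixing every diagonal entry.

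First I would compute the action of this conjugation on the two ingredients of $\cA_m$. Since $H_{N}=H_{m,m;0}$ has zero diagonal and is tridiagonal, $JH_{N}J=-H_{N}$; since $D_{m,m}$ is diagonal, $JD_{m,m}J=D_{m,m}$. Hence
\[
J\,\cA_m(\lambda)\,J=J\bigl(H_{N}-\lambda D_{m,m}\bigr)J=-H_{N}-\lambda D_{m,m}=-\bigl(H_{N}-(-\lambda)D_{m,m}\bigr)=-\cA_m(-\lambda).
\]
Taking determinants, using $\det(J)^2=1$ and that $N=2m$ is even, gives $p(\lambda)=p(-\lambda)$, where $p(\lambda):=\det\cA_m(\lambda)$; that is, the characteristic polynomial of the pencil is even. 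In particular its root set $\spec{\cA_m}$ is invariant under $\lambda\to-\lambda$, and the algebraic multiplicities of $\lambda$ and $-\lambda$ coincide.

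The final assertion of the lemma then follows formally: Lemma~\ref{th:location_rough}(a) provides invariance of $\spec{\cA_m}$ under $\lambda\to\overline\lambda$, i.e.\ reflection in the real axis, and composing this with the newly established $\lambda\to-\lambda$ yields invariance under $\lambda\to-\overline\lambda$, i.e.\ reflection in the imaginary axis. I do not anticipate any genuine obstacle here; the only thing to verify with care is the sign bookkeeping in $JH_{N}J=-H_{N}$. An alternative, matrix-free derivation of the $\lambda\to-\lambda$ symmetry uses \eqref{zw_subs}: with $c=0$ the substitution $z\mapsto-z$ sends $z+z^{-1}$ to $-(z+z^{-1})$, hence $\lambda$ to $-\lambda$, and \eqref{betdef} shows $\beta_{m,m}(-z,-z)=\beta_{m,m}(z,z)$ because each factor occurs squared and only picks up a harmless sign; one then concludes via Theorem~\ref{th:main1}(a). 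This route is marginally less clean, since Theorem~\ref{th:main1} excludes $\lambda=\pm2$, which would need a separate direct check, so I would present the conjugation argument.
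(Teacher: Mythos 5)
Your proposal is correct, and the conjugation argument is a genuinely different route from the one the paper takes. The paper gives no standalone proof of Lemma~\ref{th:c0symm}; instead it points to Lemma~\ref{th:cne0symm} and its proof, which derives the $\lambda\to-\lambda$ symmetry by passing through Theorem~\ref{th:main1}(a) and the polynomial identity $\beta_{m,m}(-w,-z)=\beta_{m,m}(z,w)$ under the substitutions \eqref{zw_subs}. Your main argument stays entirely at the level of matrices: the sign matrix $J$ satisfies $JH_NJ=-H_N$ (because the diagonal of $H_N=H_{N;0}$ vanishes when $c=0$) and $JD_{m,m}J=D_{m,m}$, giving $J\cA_m(\lambda)J=-\cA_m(-\lambda)$ and hence $p(\lambda)=p(-\lambda)$ since $N=2m$ is even. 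This is more elementary (it does not invoke Theorem~\ref{th:main1} at all), it yields the stronger conclusion that the characteristic polynomial is even (so algebraic multiplicities are also preserved), and it makes visible that only $c=0$ is actually being used: the same conjugation works for $D_{m,n}$ with $m\ne n$, giving $p(\lambda)=(-1)^N p(-\lambda)$ and hence the same invariance of the root set. The price is that, unlike the paper's argument, it does not simultaneously handle the $c\ne0$ case of Lemma~\ref{th:cne0symm} (there the diagonal of $H_{N;c}$ is nonzero and $J$ no longer anticommutes with it), which is presumably why the authors chose the $\beta$-based route. One small remark on your ``alternative'': the exclusion of $\lambda=\pm2$ in Theorem~\ref{th:main1} is actually harmless here, since Lemma~\ref{th:location_rough}(b) gives $|\lambda|<2$ for every eigenvalue when $c=0$, so no separate check is needed; but your conjugation argument remains the cleaner option.
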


In fact, the same result holds even without the assumption $c=0$, see Lemma \ref{th:cne0symm} and its proof below.

Our main aim is to determine the asymptotic behaviour of the eigenvalues of $\cA_m$ for large $m$. We start with some numerical experiments. Figure \ref{fig:pict1} shows the location of
eigenvalues of $\cA_m$ on the complex plane for $m=100$, $250$, and $500$.

\begin{figure}[!thb]
\begin{center}
\includegraphics{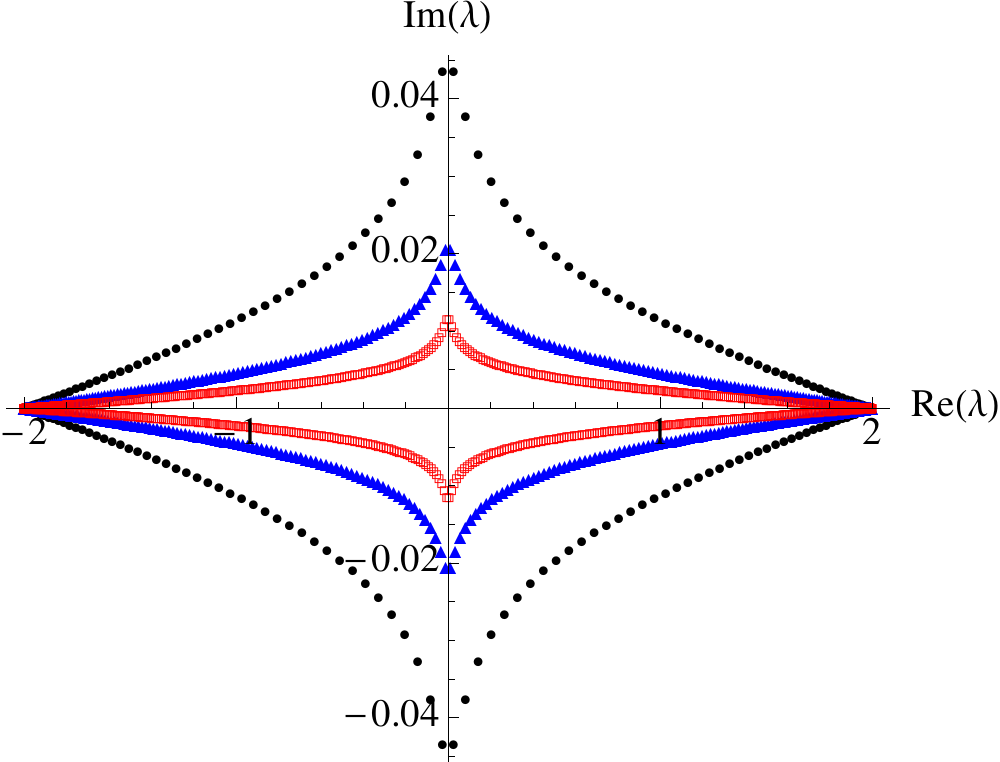}
\caption{\small $\spec{\cA_m}$ for $m=100$ (black circles), $m=250$ (blue triangles) and $m=500$ (red squares).\label{fig:pict1}}
\end{center}
\end{figure}

It is clear from Figure \ref{fig:pict1} that the eigenvalues in each case lie on certain curves in the complex plane, and that the shapes of these curves are somewhat similar. The situation becomes much clearer
if we write the eigenvalues of $\cA_m$ as $\lambda=u+\ir v/N=u+\ir v/(2m)$, where $u=\Re(\lambda)$ and $v=N\,\Im(\lambda)$, and redraw them in coordinates $(u,v)$, as in Figure~\ref{fig:pict2}.

\begin{figure}[!thb]
\begin{center}
\includegraphics{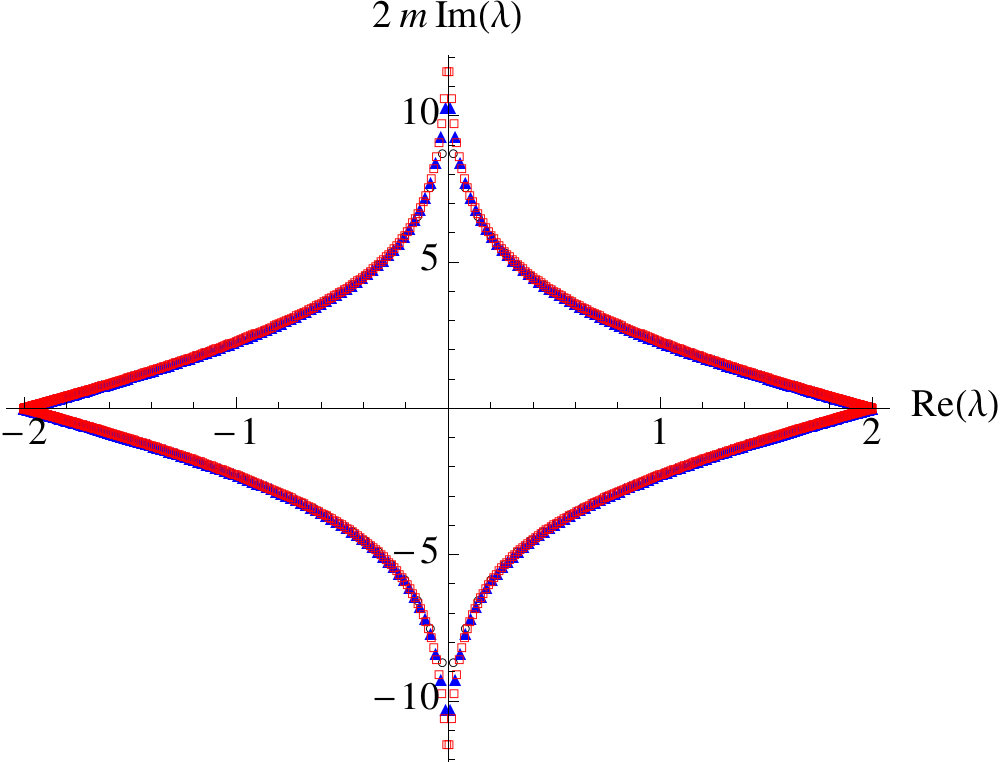}
\caption{\small $\spec{\cA_m}$ for $m=100$ (black circles), $m=250$ (blue triangles) and $m=500$ (red squares), drawn in coordinates $(\Re(\lambda), 2m\,\Im(\lambda))$.\label{fig:pict2}}
\end{center}
\end{figure}

In these coordinates, the spectra seem to lie (at least approximately) on the same curve, which is independent of $m$. We can in fact determine this curve explicitly.

\begin{theorem}\label{th:c=0}
Let $c=0$,  $n=m=N/2\to\infty$. The eigenvalues of $\cA_{m}$ are all non-real, and those not lying on the imaginary axis satisfy
\begin{equation}\label{eq:c=0asympt}
\Im(\lambda)=\pm  \frac{\Lambda_0(|\Re(\lambda)|)}{2m}+o(m^{-1}),
\end{equation}
\begin{equation}\label{eq:c=0Reasympt}
\Re(\lambda)=\pm 2\cos\left(\frac{2\pi k}{2m+1}\right)+o(m^{-1}),\qquad k=1,\dots,\left\lfloor \frac{m}{2}\right\rfloor,
\end{equation}
where $\lfloor\cdot\rfloor$ denotes the integer part, and
\begin{equation}\label{eq:Lambda0(u)}
\Lambda_0(u):=\sqrt{4-u^2}\log\left(\tan\left(\frac{\pi}{4}+\frac{1}{2}\arccos\left(\frac{u}{2}\right)\right)\right).
\end{equation}

If $m$ is even, there are no other eigenvalues.

If $m$ is odd, there are additionally two purely imaginary eigenvalues at
\begin{equation}\label{eq:imaginaries}
\lambda=\pm \ir \frac{\log(m)}{m} \left(1+o(1)\right).
\end{equation}
\end{theorem}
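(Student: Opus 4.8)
\emph{Step 1 (reduction; no real eigenvalue; $\lambda=\pm2$ excluded).} The plan is to use $\lambda=z+z^{-1}$, with $w=z$ since $c=0$, to reduce to the polynomial $r_m^{(2)}$ of Lemma~\ref{lem:beta_factorisation}. First I would check $\lambda=\pm2\notin\spec{\cA_m}$: letting $z\to1$ in $p_{m,m;0}(\lambda)=(-1)^m\beta_{m,m}(z,z)/(z-z^{-1})^2$ (from \eqref{pdef} and Lemma~\ref{sigtaulemma}), with $z^k-z^{-k}=2k(z-1)+O((z-1)^2)$, gives $p_{m,m;0}(2)=(-1)^m\big((m+1)^2+m^2\big)\ne0$; $\lambda=-2$ follows by the $\lambda\mapsto-\lambda$ symmetry (Lemma~\ref{th:c0symm}). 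Next I would rule out real eigenvalues: by Theorem~\ref{th:main1}(b) a real one needs $z$ real or $|z|=1$, but rewriting $r_m^{(2)}(z)=0$ as $z^{2m+1}=-\ir(z+\ir)/(z-\ir)$ shows $|z|=1$ forces $|z+\ir|=|z-\ir|$, hence $z\in\R$, while a real root must obey $z^{2m+2}=1$ and $z(1-z^{2m})=0$, i.e.\ $z=\pm1$, already excluded. Hence the $2m$ eigenvalues of $\cA_m$ are all non-real, and by conjugation exactly $m$ of them lie in the open upper half-plane.

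\emph{Step 2 (counting by symmetry).} By Lemma~\ref{lem:beta_factorisation} together with \eqref{r1r2formula}, for a non-real eigenvalue either $r_m^{(2)}(z)=0$ or $r_m^{(1)}(z)=0$. Which one corresponds to $\Im\lambda>0$ is settled by the identity $F_m(z)-\ir=z^{-m-1}r_m^{(2)}(z)/(z^m-z^{-m})$ (a short computation), so $r_m^{(2)}(z)=0\iff F_m(z)=\ir$, combined with Lemma~\ref{lem:Fanotherbound}(a), which gives $\Im F_m(z)=\Im\tilde F_m(\lambda)\ge\Im\lambda$ and hence forces $F_m(z)=\ir$ (and not $-\ir$, the only other root of $F_m(z)^2=-1$) precisely when $\Im\lambda>0$. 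Since $\Im z$ and $\Im\lambda$ have the same sign for $|z|>1$, the eigenvalues with $\Im\lambda>0$ biject via $z\mapsto z+z^{-1}$ with the upper half-plane roots of $r_m^{(2)}$, of which there are therefore exactly $m$. The involution $z\mapsto-\bar z$ preserves both $\{z:r_m^{(2)}(z)=0\}$ (one verifies $r_m^{(2)}(-\bar z)=\overline{r_m^{(2)}(z)}$ from \eqref{r1def}--\eqref{r1r2formula}) and the upper half-plane, and by Lemma~\ref{z=iybound} the only root of $r_m^{(2)}$ on $\ir\R$ is $z=\ir y_m$, present iff $m$ is odd. Hence $r_m^{(2)}$ has exactly $\lfloor m/2\rfloor$ roots with $\arg z\in(0,\pi/2)$, and these correspond to the eigenvalues in the open first quadrant; with $\lambda\mapsto-\lambda$ and $\lambda\mapsto\bar\lambda$ they yield $4\lfloor m/2\rfloor$ eigenvalues, and the remaining $2m-4\lfloor m/2\rfloor\in\{0,2\}$ are the purely imaginary ones of Lemma~\ref{z=iybound} --- none for even $m$, and $\pm\ir(\log m/m)(1+o(1))$ for odd $m$, which is \eqref{eq:imaginaries}.

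\emph{Step 3 (asymptotics in the first quadrant).} For a first-quadrant root I write $z=\er^{s+\ir\theta}$ with $s>0$, $\theta\in(0,\pi/2)$; by Theorem~\ref{th:crudebound}, $s\to0$. Writing $r_m^{(2)}(z)=0$ as $z^{2m+1}=g(z)$ with $g(z):=-\ir(z+\ir)/(z-\ir)$, the crucial fact is that $g(\er^{\ir\theta})=\tan(\pi/4+\theta/2)$ is real and positive on $(0,\pi/2)$. Expanding $g(z)=g(\er^{\ir\theta})(1+O(s))$ and taking modulus and argument of $z^{2m+1}=g(z)$,
\[
(2m+1)s=\log\tan\!\big(\tfrac\pi4+\tfrac\theta2\big)+O(s),\qquad (2m+1)\theta=2\pi k+O(s)\quad(k\in\Z);
\]
dividing the second by $2m+1$ gives $\theta_k=2\pi k/(2m+1)+o(m^{-1})$, with $k=1,\dots,\lfloor m/2\rfloor$ forced by $\theta_k\in(0,\pi/2)$ and the count of Step~2, and $s_k=(2m+1)^{-1}\log\tan(\pi/4+\theta_k/2)(1+o(1))$. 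Substituting into $\lambda=z+z^{-1}=2\cos\theta\cosh s+2\ir\sin\theta\sinh s$ gives $\Re\lambda=2\cos(2\pi k/(2m+1))+o(m^{-1})$, which is \eqref{eq:c=0Reasympt}, and, with $u=\Re\lambda$ (so that $2\sin\theta_k=\sqrt{4-u^2}$ and $\theta_k/2=\tfrac12\arccos(u/2)$ up to the required errors),
\[
\Im\lambda=2\sin\theta_k\sinh s_k=\frac{\sqrt{4-u^2}\,\log\tan\!\big(\tfrac\pi4+\tfrac12\arccos(u/2)\big)}{2m}+o(m^{-1})=\frac{\Lambda_0(u)}{2m}+o(m^{-1}),
\]
which is \eqref{eq:c=0asympt}--\eqref{eq:Lambda0(u)}; the $\pm$ and the $|\Re\lambda|$ come from the $\lambda\mapsto-\lambda$ and $\lambda\mapsto\bar\lambda$ symmetries.

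\emph{The main obstacle} is the uniformity of Step~3 as $\arg z\to\pi/2$, i.e.\ for $k$ near $\lfloor m/2\rfloor$: there $z$ approaches the pole $z=\ir$ of $g$, $s_k$ grows to order $\log m/m$, the approximation $g(z)\approx g(\er^{\ir\theta})$ is no longer innocuous, the argument of $g$ drifts away from $0$, and the resulting near-imaginary-axis eigenvalues must be controlled carefully and reconciled quantitatively with the genuinely imaginary eigenvalue of Lemma~\ref{z=iybound}. That regime is also where one must actually prove --- not merely read off --- that each admissible $k$ gives exactly one first-quadrant root; I would do this by a monotonicity/fixed-point argument for the scalar equation $\theta\mapsto(2m+1)\theta-\arg g(\er^{s(\theta)+\ir\theta})-2\pi k$, in which $s(\theta)$ is pinned down monotonically by the modulus relation, comparing it to the affine map $\theta\mapsto(2m+1)\theta-2\pi k$.
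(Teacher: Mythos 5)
Your proposal follows essentially the same route as the paper: reduce to the factor $r_m^{(2)}$ via Lemma~\ref{lem:beta_factorisation}, show $\lambda=\pm2$ and all real $\lambda$ are excluded by the modulus identity $|z|^{2m+1}=|z+\ir|/|z-\ir|$, make the ansatz $z=\er^{\ir\theta+s/m}$, extract the argument equation $(2m+1)\theta\approx 2\pi k$ and the modulus equation $\er^{(2m+1)s/m}\approx\tan(\pi/4+\theta/2)$, then read off $\Re\lambda$ and $\Im\lambda$ and invoke Lemma~\ref{z=iybound} for the imaginary pair. Both arguments explicitly defer the rigorous (Rouch\'e/contraction) localisation of the roots, which you correctly flag as the remaining obstacle, and which the paper likewise acknowledges and omits.

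The one place where your argument genuinely adds something is Step~2: you pin down \emph{a priori} that the eigenvalues with $\Im\lambda>0$ biject with the upper-half-plane roots of $r_m^{(2)}$ (selecting $F_m(z)=\ir$ rather than $-\ir$ via the Herglotz bound of Lemma~\ref{lem:Fanotherbound}(a)) and then quarter that set by the involution $z\mapsto-\bar z$. The paper instead establishes the count \emph{a posteriori}, by observing that the asymptotic formulas already produce $N$ (resp.\ $N-2$) distinct values and appealing to the matrix size. Your version of the counting is cleaner because it does not presuppose that the Rouch\'e step has been carried out; it is a modest but real improvement on the paper's exposition. One small imprecision: saying ``the only root of $r_m^{(2)}$ on $\ir\R$ is $z=\ir y_m$'' is stronger than Lemma~\ref{z=iybound} gives (which also produces the reciprocal/conjugate images); what you actually need, and what is true, is that $\ir y_m$ is the unique such root in the open upper half-plane, equivalently the unique one with $|z|>1$, and you should say that.
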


Before proving the theorem, we illustrate its effectiveness by some examples, see Figure \ref{fig:asympt}. Note the different behaviour, in the vicinity of the imaginary axis, for even and odd values of $m$.
\begin{figure}[!htb]
\begin{center}

\fbox{\includegraphics{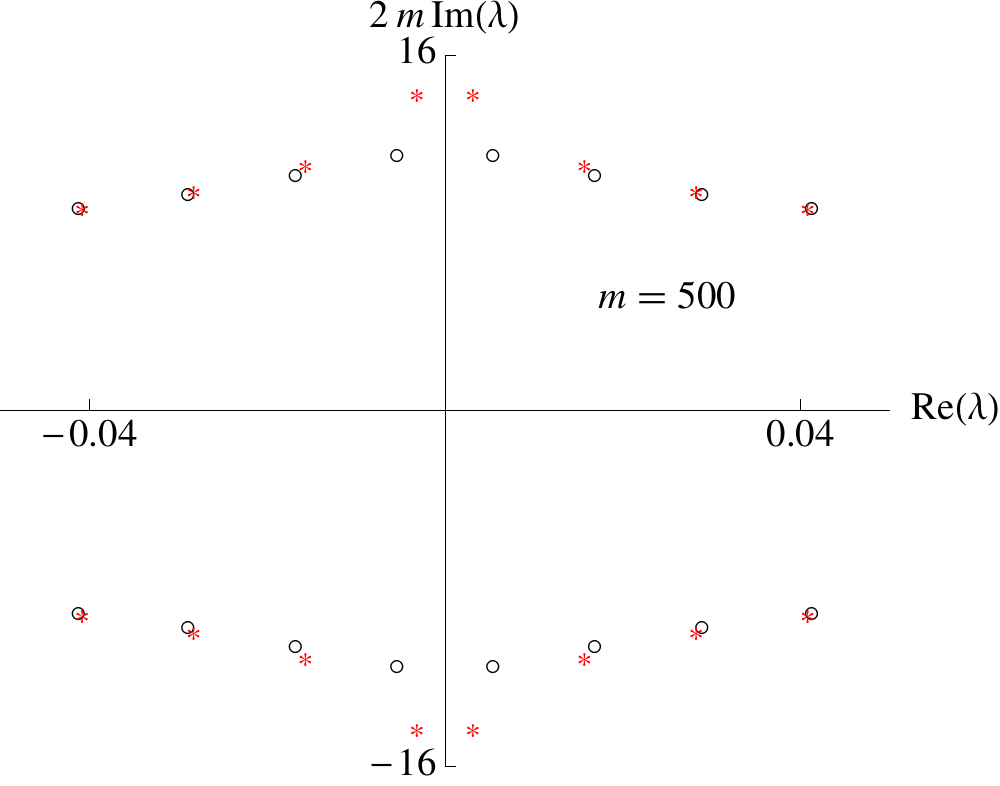}}\\

\

\fbox{\includegraphics{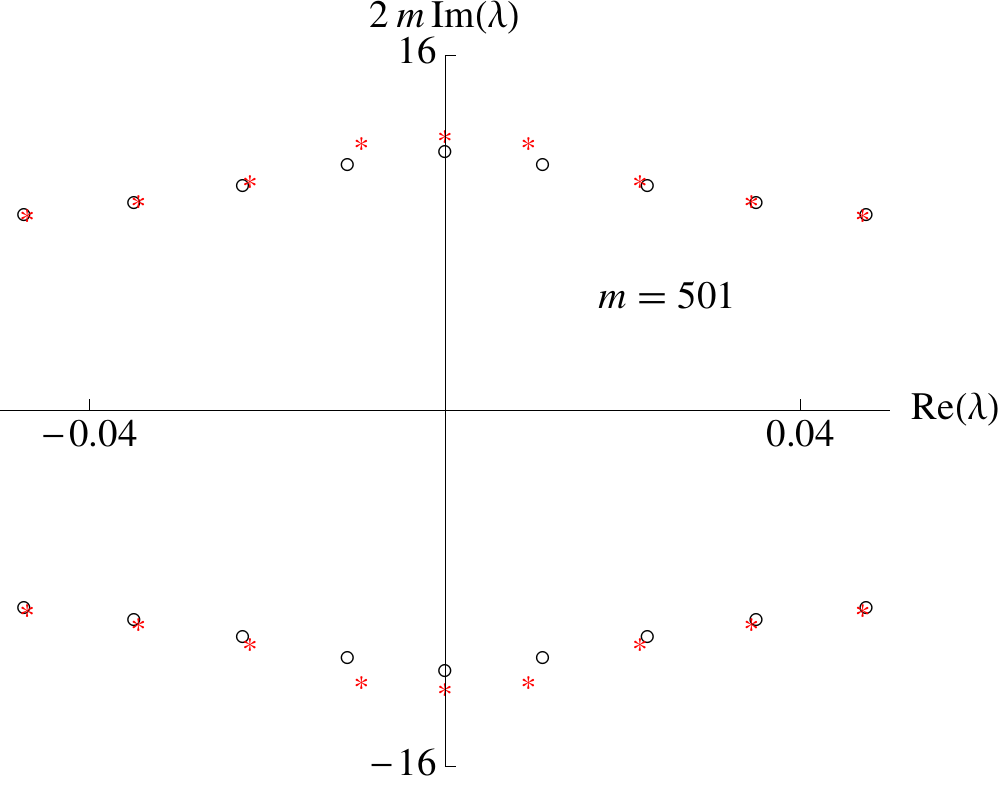}}
\caption{\small The blow-up near the imaginary axis of the numerically computed $\spec{\cA_{m}}$, as defined in \eqref{eq:A_moperatordef}, for $m=500$ and $m=501$ (white circles), drawn in coordinates
$(\Re(\lambda), 2m\,\Im(\lambda))$ together with their asymptotic values (red stars) given by Theorem \ref{th:c=0}.\label{fig:asympt}}
\end{center}
\end{figure}

\begin{proof}[Proof of Theorem \ref{th:c=0}]
This relies heavily on Theorem \ref{th:main1} and Lemma \ref{lem:beta_factorisation}.

We first prove that all eigenvalues are non-real. Assume the opposite, and consider a real eigenvalue $\lambda$. By Theorem \ref{th:main1}(b), either the corresponding value of $z$ is real, or it lies on the unit circle and satisfies
\eqref{qtildemmz}. Suppose that $r_m^{(2)}(z)=0$. From \eqref{r2def}, we have
\begin{equation}\label{eq:r2eq}
z^{2m+1}=-\ir\frac{z+\ir}{z-\ir}.
\end{equation}
and therefore, by taking absolute values,
\begin{equation}\label{eq:r2eqmodulus}
|z|^{2m+1}=\frac{|z+\ir|}{|z-\ir|}.
\end{equation}
The left-hand side of \eqref{eq:r2eqmodulus} is equal to one if and only if $z$ lies on the unit circle, and the right-hand side is equal to one if and only if $z$ is equidistant from $-\ir$ and $+\ir$, and is therefore real. Thus, the only possibilities for a solution of \eqref{eq:r2eqmodulus} which is either real or lying on the unit circle are $z=\pm1$. But these solutions correspond to $\lambda=\pm 2$, which contradicts Lemma \ref{th:location_rough}(b). The case
$r_m^{(1)}(z)=0$ is similar. Thus, $\spec{\cA_m}\cap\R=\varnothing$.

To obtain the asymptotics \eqref{eq:c=0asympt}--\eqref{eq:Lambda0(u)} formally we once more look at the solutions of the equations $r^{(j)}_m(z)=0$, $j=1,2$, now taking $m\to\infty$. Consider again the case $j=2$, i.e.\ the equation
\eqref{eq:r2eq}. We seek solutions of the form
\begin{equation}\label{eq:zanzats1}
z=\er^{\ir\theta+s/m},
 \end{equation}
 where
\begin{equation}\label{eq:sanzats1}
 s=\sum_{k=0}^\infty \frac{s_k}{m^k},
\end{equation}
and $\theta$ and $s_k$ are real.
We justify making this ansatz at the end of the proof. We recall \eqref{eq:z,c=0}
and normalise the eigenvalues as
\begin{equation}\label{eq:lamscaling}
 \lambda=z+\frac{1}{z}=u+\frac{\ir v}{2m},\qquad u,v\in\R
\end{equation}
By separating the real and imaginary parts, \eqref{eq:zanzats1}, \eqref{eq:lamscaling} immediately imply that
 \begin{equation}\label{eq:ufull}
  u=2\cos(\theta)\cosh(s/m)
  \end{equation}
  and
   \begin{equation}\label{eq:vfull}
\frac{v}{2m}=2\sin(\theta)\sinh(s/m).
  \end{equation}

We may also assume the first of inequalities \eqref{eq:z,w>1}, and look, in the first instance, for the eigenvalues $\lambda$ in the first quadrant, so that $u,v>0$. Thus, $s>0$, and $0<\theta<\pi/2$.

We first use \eqref{eq:r2eqmodulus}, which may be rewritten as
\begin{equation}\label{eq:r2eqmodulus1}
\begin{split}
\er^{(2m+1)s/m}&=\left|\frac{z+\ir}{z-\ir}\right|=\left| \frac{\sqrt{\ir z}-\sqrt{\ir z}^{\,-1}}{\sqrt{\ir z}+\sqrt{\ir z}^{\,-1}} \right| \\
&=\left|\tan\left(\frac{\log\left(\sqrt{\ir z}\right)}{\ir}\right)\right|\\
&=\left|\tan\left(\frac{\pi}{4}+\frac{\theta}{2}-\ir\frac{s}{2m}\right)\right|,
\end{split}
\end{equation}
where all the square roots are understood in the principal value sense.

Retaining only the terms of order $O(1)$ for the real parts, and the terms of order $O\!\left(m^{-1}\right)$ for the imaginary parts in \eqref{eq:ufull}, \eqref{eq:vfull}, then using \eqref{eq:sanzats1}, and substituting the results
\[
u=2\cos(\theta)+o(1),\qquad v=4s_0\sin(\theta)+o(1)=2s_0\sqrt{4-u^2}+o(1)
\]
into \eqref{eq:r2eqmodulus1}, we obtain
\[
\theta=\arccos\left(\frac{u}{2}\right),\qquad \er^{2s_0(\theta)}=\tan\left(\frac{\pi}{4}+\frac{\theta}{2}\right).
\]
up to leading order.

The results \eqref{eq:c=0asympt}, \eqref{eq:Lambda0(u)} (for $\lambda$ in the first quadrant) now follow from backward substitutions.
We extend it to the other quadrants by using the symmetries described in Lemma \ref{th:c0symm}.

To prove \eqref{eq:c=0Reasympt}, we consider the real parts of the equation \eqref{eq:r2eq}, which leads, again to leading order, to
\[
\cos((2m+1)\theta)\tan\left(\frac{\pi}{4}+\frac{\theta}{2}\right)=\frac{\cos(\theta)}{1-\sin(\theta)}.
\]
This is equivalent to
\[
\cos((2m+1)\theta)=1,
\]
which implies \eqref{eq:c=0Reasympt} once we restrict ourselves to the first quadrant; we again extend to other quadrants by symmetry.

The final statement of the theorem is just a re-statement of Lemma \ref{z=iybound}.

To locate the solutions of the equation $\beta_{m,m}(z)=0$ rigorously for large $m$, where $\beta_{m,m}$ is analytic in $z$, we now should proceed to an application of Rouche's theorem and/or a contraction mapping theorem to prove that the zeros are indeed where expected from the above non-rigorous asymptotic analysis.
This is very tedious, because it depends on obtaining accurate estimates of  $\beta_{m,m}(z)$ for all $z$ in small closed circles around the expected positions of the zeros; such calculations are carried out for different models in \cite[section~4]{EBD1}, \cite[section~2]{EBD2} and \cite{DjMi}.

Once this has been carried out, the ansatz used earlier in the proof is justified by observing that if $m$ is even then \eqref{eq:c=0Reasympt} yields $N$ distinct eigenvalues and $\cA_m$ is an $N\times N$ matrix, so there cannot be any eigenvalues having a different asymptotic form. If $m$ is odd then \eqref{eq:c=0Reasympt} yields $N-2$ distinct eigenvalues, but \eqref{eq:imaginaries} provides another $2$ eigenvalues, so once again there cannot be any eigenvalues having a different asymptotic form. We omit this part of the proof in order to focus on the more interesting aspects of the analysis.
\end{proof}

\begin{remark}
One can determine further asymptotic terms in  \eqref{eq:sanzats1} and therefore in  \eqref{eq:c=0asympt}, \eqref{eq:c=0Reasympt}, by continuing the iteration process: on the next step, determine $s_1$ by retaining the terms of order $O\!\left(m^{-1}\right)$ in \eqref{eq:r2eqmodulus1}, and then use the real parts of the equation \eqref{eq:r2eq} to find a correction to $\theta$, and so on.
\end{remark}

\begin{remark} As $u\to\pm 2$, $\Lambda_0(u)\sim (2-|u|)$. More interestingly, $\Lambda_0(u)$ blows up logarithmically as $u\to0$. This matches the behaviour which we have seen in Lemma~\ref{z=iybound} for purely imaginary eigenvalues and is also compatible with the numerical calculations displayed in Figure~\ref{fig:pict2}.
\end{remark}

\section{The case $c\ne 0$, $m=n$}

In this section, we denote for brevity
\[
\cA_{m;c}=\cA_{m;c}(\lambda):=\cA_{m,m;c}(\lambda)=H_{N;c}-\lambda D_{m,m},
\]
where
\[
N=2m,\qquad H_{N;c}=H_{m,m;c}.
\]

We shall see later that the spectral ``picture'' of the pencil $\cA_{m;c}$ with $c\ne 0$ may be quite different from that of $\cA_m$.
Nevertheless, the condition $m=n$ implies extra symmetries as in Lemma \ref{th:c0symm}.

\begin{lemma}\label{th:cne0symm}
Let $n=m$, $c\ne 0$. Then, in addition to the results of Theorem \ref{th:location_rough}, the spectrum $\spec{\cA_{m;c}}$ is invariant under
the symmetry $\lambda\to -\lambda$ and therefore $\spec{\cA_{m;c}}$ is invariant under reflections about the real and imaginary axes. Moreover, $\spec{\cA_{m;c}}=\spec{\cA_{m;-c}}$.
\end{lemma}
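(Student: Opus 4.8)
The plan is to prove Lemma~\ref{th:cne0symm} by exhibiting three explicit similarity/congruence transformations that act on the pencil $\cA_{m;c}$, one for each claimed symmetry. The key observation throughout is that $H_{N;c}$ and $D_{m,m}$ have special block structure with respect to the involution that reverses the order of the basis vectors, and this is what generates the symmetry $\lambda\to-\lambda$.

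First I would handle the symmetry $\lambda\to-\lambda$. Let $J$ be the $N\times N$ ``flip'' permutation matrix with $(J)_{r,s}=1$ iff $r+s=N+1$, so $J=J^{-1}=J^\ast$. Since $N=2m$ is even, a direct check gives $J D_{m,m} J = -D_{m,m}$ (the first $m$ diagonal entries, all $+1$, are swapped with the last $m$, all $-1$), while $J H_{N;c} J = H_{N;c}$ (the tridiagonal matrix with constant diagonal $c$ and constant off-diagonal $1$ is invariant under reversing the basis order). Hence
\[
J\,\cA_{m;c}(\lambda)\,J = J(H_{N;c}-\lambda D_{m,m})J = H_{N;c}+\lambda D_{m,m} = \cA_{m;c}(-\lambda).
\]
Therefore $\cA_{m;c}(\lambda)$ is invertible iff $\cA_{m;c}(-\lambda)$ is, which shows $\lambda\in\spec{\cA_{m;c}}\iff-\lambda\in\spec{\cA_{m;c}}$. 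Combining this with Lemma~\ref{th:location_rough}(a), which gives invariance under $\lambda\to\overline\lambda$, yields invariance under $\lambda\to-\overline\lambda$ as well, i.e.\ invariance under reflections about both coordinate axes.

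Next I would prove $\spec{\cA_{m;c}}=\spec{\cA_{m;-c}}$. Here the natural transformation is the diagonal sign matrix $E=\mathrm{diag}((-1)^1,(-1)^2,\dots,(-1)^N)$, which satisfies $E=E^{-1}=E^\ast$. Conjugating a tridiagonal matrix by $E$ flips the sign of every off-diagonal entry and leaves the diagonal untouched, so $E H_{N;c} E = H_{N;-c}$ is false as stated — rather, one gets the matrix with diagonal $c$ and off-diagonal $-1$. To flip the sign of $c$ instead, I would instead apply $E$ together with the observation that the matrix with diagonal $c$, off-diagonal $-1$ equals $-H_{N;-c}$. Concretely, $E H_{N;c} E = cI - (H_{N;0})$ in the ``negated off-diagonal'' sense $= -(H_{N;0}) + cI$, and since $H_{N;0}$ has eigenvalues symmetric about $0$ this is unitarily equivalent to $H_{N;0}+cI$... so the cleanest route is: $E H_{N;c} E$ has the same diagonal $c$ and off-diagonal $-1$, hence $E H_{N;c} E = 2cI - H_{N;-c}$? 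Let me instead simply note $E(H_{N;c})E$ is the tridiagonal matrix $T$ with $T_{rr}=c$, $T_{r,r\pm1}=-1$; then $-T$ has diagonal $-c$, off-diagonal $+1$, i.e.\ $-T = H_{N;-c}$, so $E H_{N;c} E = -H_{N;-c}$. Also $E$ is block-constant-free but commutes with $D_{m,m}$ since $D_{m,m}$ is diagonal, so $E D_{m,m} E = D_{m,m}$. Therefore $E\,\cA_{m;c}(\lambda)\,E = -H_{N;-c}-\lambda D_{m,m}$; this is not yet $\cA_{m;-c}$. Composing with $J$: using $JD_{m,m}J=-D_{m,m}$ and $J(-H_{N;-c})J = -H_{N;-c}$ gives $JE\,\cA_{m;c}(\lambda)\,EJ = -H_{N;-c}+\lambda D_{m,m} = -(H_{N;-c}-\lambda D_{m,m}) = -\cA_{m;-c}(\lambda)$. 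Since $-\cA_{m;-c}(\lambda)$ is invertible iff $\cA_{m;-c}(\lambda)$ is, we conclude $\spec{\cA_{m;c}}=\spec{\cA_{m;-c}}$.

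The main obstacle is purely bookkeeping: getting the signs right in the three conjugations $J(\cdot)J$, $E(\cdot)E$, and their composition, and in particular verifying carefully that $JH_{N;c}J=H_{N;c}$, $EH_{N;c}E=-H_{N;-c}$, $JD_{m,m}J=-D_{m,m}$ (this is where evenness of $N$ enters), and $ED_{m,m}E=D_{m,m}$. Each of these is a one-line matrix-entry computation, but the composition must be tracked precisely. I expect no conceptual difficulty; the only subtlety worth flagging in the write-up is that the $\lambda\to-\lambda$ symmetry genuinely requires $m=n$ (so that $J$ conjugation negates $D_{m,m}$), which is why this lemma does not hold for general $m\neq n$, whereas the $\lambda\to\overline\lambda$ symmetry from Lemma~\ref{th:location_rough}(a) holds unconditionally.
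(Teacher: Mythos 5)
Your proof is correct, but it is a genuinely different argument from the one in the paper. The paper deduces both symmetries from the characterization in Theorem~\ref{th:main1}: under \eqref{zw_subs}, the map $\lambda\to-\lambda$ corresponds to $(z,w)\to(-w,-z)$ and the map $c\to-c$ to $(z,w)\to(w,z)$, and a one-line check from \eqref{betdef} shows $\beta_{m,m}(-w,-z)=\beta_{m,m}(w,z)=\beta_{m,m}(z,w)$. Your route instead exhibits explicit involutions at the matrix level: the flip $J$ satisfies $JH_{N;c}J=H_{N;c}$ and $JD_{m,m}J=-D_{m,m}$ (using $N=2m$), giving $J\cA_{m;c}(\lambda)J=\cA_{m;c}(-\lambda)$, and the alternating sign matrix $E$ satisfies $EH_{N;c}E=-H_{N;-c}$ and $ED_{m,m}E=D_{m,m}$, so composing gives $JE\,\cA_{m;c}(\lambda)\,EJ=-\cA_{m;-c}(\lambda)$. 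Both approaches are valid; yours is more elementary and self-contained, needing none of the $\beta_{m,n}$ machinery and applying uniformly to all $\lambda$ (the paper's route formally excludes the four points $\lambda=\pm2\pm c$ where Theorem~\ref{th:main1}(a) does not apply, though this is harmless), while the paper's is shorter once Theorem~\ref{th:main1} is in hand and makes transparent why the symmetries fail for $m\ne n$, since $\beta_{m,n}$ is then not symmetric in its arguments. One stylistic point: the middle of your write-up wanders through several false starts ($EH_{N;c}E=H_{N;-c}$, $2cI-H_{N;-c}$, etc.) before landing on the correct $EH_{N;c}E=-H_{N;-c}$; in a final version you should simply state and verify that identity directly.
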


\begin{proof} Both results follow immediately from Theorem \ref{th:main1}, the explicit formula \eqref{betdef}, and substitutions \eqref{zw_subs}.  Indeed, the symmetry $\lambda\to-\lambda$ corresponds to the symmetry
$\{z,w\}\to\{-w,-z\}$, and we have $\beta_{m,m}(-w,-z)=\beta_{m,m}(z,w)$. The change  $c\to-c$ corresponds to $z\leftrightarrow w$, and we also have $\beta_{m,m}(w,z)=\beta_{m,m}(z,w)$.
\end{proof}

\begin{remark}
In general, for $m\ne n$, and $c\ne 0$, the spectrum $\spec{\cA_{m,n;c}}$ is \emph{not} symmetric with respect to  either $\lambda\to -\lambda$ or $c\to -c$.
\end{remark}

Since we know by Lemma \ref{th:location_rough}(c) that all the eigenvalues are real when $|c|\ge 2$, we may consider, in our study of non-real eigenvalues, only the case $0<c<2$. We also already know by Lemma \ref{th:location_rough}(b) that all the eigenvalues of $\cA_{m;c}$ satisfy $|\Re(\lambda)|\leq |\lambda |<2+|c|$.

We hope to improve this estimate for the real parts of the non-real eigenvalues. The following conjecture is amply confirmed by numerical and asymptotic calculations, for \emph{every} value of $m$, but its proof seems to be much harder than one would expect. We defer this to a separate paper, because we wish to concentrate on the location of the complex eigenvalues. Strictly speaking, we do not use the conjecture elsewhere.

\begin{conjecture}\label{conj:Relambdabound}
Let $c>0$. If $\lambda$ is a non-real eigenvalue of $\cA_{m;c}$, then $c<2$ and
\begin{equation}
|\lambda\pm c|<2,
\end{equation}
and therefore
\begin{equation}
|\Re(\lambda)|\le 2-c.
\end{equation}
\end{conjecture}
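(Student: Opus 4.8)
The plan is to work with the characterization of non-real eigenvalues furnished by Theorem~\ref{th:main1}(c), namely $F_m(z)F_m(w) = -1$ (recall that $n=m$ here), together with the substitutions \eqref{zw_subs} which in the present setting read $\lambda - c = z + z^{-1}$, $\lambda + c = w + w^{-1}$ with $|z|, |w| > 1$. The conjecture splits into two assertions: first, that non-real eigenvalues force $c < 2$ (this already follows from Lemma~\ref{th:location_rough}(c), so there is nothing new to prove there); and second, the substantive bounds $|\lambda - c| < 2$ and $|\lambda + c| < 2$, from which $|\Re(\lambda)| \le 2 - c$ follows by averaging: $2\Re(\lambda) = (\lambda - c) + (\lambda + c)$, so $|\Re(\lambda)| \le \tfrac12(|\lambda-c| + |\lambda+c|) < 2$, but one needs the sharper conclusion $|\Re(\lambda)| \le 2-c$, which requires exploiting that $\lambda - c$ and $\lambda + c$ differ by the \emph{real} quantity $2c$.

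The key observation I would try to exploit is the following geometric reformulation. Writing $\zeta = z + z^{-1}$ for $|z| > 1$, the image of $\{|z| > 1\}$ under $\zeta \mapsto z + z^{-1}$ is $\C \setminus [-2,2]$, and $|\zeta| < 2$ is equivalent to $z$ lying in a certain lens-shaped region (the preimage of the open disc of radius $2$). So the claim $|\lambda \pm c| < 2$ is equivalent to both $z$ and $w$ lying in this lens region, and I would want to show that whenever $F_m(z) F_m(w) = -1$ with $\lambda$ non-real, the points $z, w$ cannot escape it. The natural tool is a quantitative lower bound on $|F_m(z)|$ that grows as $|z|$ increases — Lemma~\ref{lem:F_est1} gives $|F_m(z)| > \er^{\log|z|}\tanh(m\log|z|) = |z|\tanh(m\log|z|)$ — combined with the constraint $|F_m(z)||F_m(w)| = 1$. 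If one of $z,w$ were ``large'' (outside the lens), the corresponding factor would be bounded below by something exceeding $1$, forcing the other factor below $1$, hence forcing the other of $z, w$ to be ``small'' (close to the unit circle); then one must derive a contradiction with the real-shift relation $w + w^{-1} = z + z^{-1} + 2c$. Translating the lens membership and the $2c$-shift into inequalities on $|z|, |w|$ and their arguments, and closing the loop, is where the real work lies.

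Concretely, the steps I would carry out are: (1) record the lens characterization — $|\zeta| < 2$ for $\zeta = z+z^{-1}$, $|z|>1$, iff $z = \er^{s+\ir\theta}$ with $s > 0$ satisfying an explicit inequality $\cosh^2 s < 4/(something involving \theta)$, or more simply iff $4\cos^2\theta\cosh^2 s + 4\sin^2\theta\sinh^2 s < 4$, i.e.\ $\cosh(2s) + \cos(2\theta) < 2$, which since $\cosh(2s) \ge 1$ already forces $\cos(2\theta) < 1$ and bounds $s$ in terms of $\theta$; (2) use $|F_m(z)F_m(w)| = 1$ with the bound of Lemma~\ref{lem:F_est1} to get $|z|\tanh(m\log|z|)\cdot|w|\tanh(m\log|w|) < $ something, forcing $\log|z| + \log|w|$ to be small (of order $m^{-1}\log m$ roughly, via the estimates already used in Theorem~\ref{th:crudebound}); (3) combine the smallness of $s_z := \log|z|$ and $s_w := \log|w|$ with the identity obtained by subtracting the two substitution equations, $(w - w^{-1})\,??? $ — more precisely from $w + w^{-1} - (z + z^{-1}) = 2c$, taking real and imaginary parts, to pin down that $\Re(\lambda) \pm c$ both lie strictly inside $(-2,2)$; (4) to get the sharp $2-c$ rather than $2$, use that the two lens conditions for $z$ and $w$ involve the \emph{same} $\Im(\lambda)$-scale but real parts shifted by $2c$, so the worst case is when one of $\lambda \pm c$ approaches $\pm 2$ and then the other is at distance $2c$ from it, giving $|\Re(\lambda)| \le 2 - c$.

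The main obstacle I anticipate is step (3)--(4): turning the asymptotic ``$z, w$ are close to the unit circle'' information into a \emph{uniform-in-$m$, for every $m$} strict inequality, as the conjecture demands (\emph{every} value of $m$, not just large $m$). The estimates in Theorem~\ref{th:crudebound} and Corollary~\ref{cor:FFlognlogm} are asymptotic ($o(1)$, ``for all large enough $m$''), whereas the conjecture is uniform; so one would need either a genuinely $m$-independent version of the $|F_m|$ lower bound near $|z|=1$ that is nonetheless strong enough, or a different, more algebraic argument — perhaps a monotonicity or convexity property of the curve $\beta_{m,m}(z,z) = 0$ (via the factorization $r_m^{(1)}, r_m^{(2)}$ of Lemma~\ref{lem:beta_factorisation} in the $c=0$ case, and an analogous factorization for $c \ne 0$) that directly confines the roots. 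Finding such a factorization or monotonicity for $c \ne 0$ is exactly the point the authors flag as ``much harder than one would expect,'' and is presumably why the statement is left as a conjecture rather than a lemma; so a complete proof along these lines is not something I would expect to push through here, and indeed the text explicitly defers it to a separate paper.
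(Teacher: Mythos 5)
The paper does not contain a proof of this statement: it is Conjecture~\ref{conj:Relambdabound}, and the authors explicitly defer the argument to a separate paper (``its proof seems to be much harder than one would expect''), remarking further that the conjecture is not used elsewhere in the text. So there is no in-paper proof to compare yours against; what can be assessed is whether your sketch has a realistic chance of closing.

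Your choice of tools --- the characterization $F_m(z)F_m(w)=-1$ from Theorem~\ref{th:main1}(c) together with the lens-shaped preimage of $\{|\zeta|<2\}$ under $z\mapsto z+z^{-1}$ --- is reasonable, but one of the obstacles you name is smaller than you think and the other larger. The ``asymptotic vs.\ uniform'' worry is partly illusory: Lemma~\ref{lem:Fanotherbound}(b) combined with Lemma~\ref{lem:Fanotherbound}(c) already gives the $m$\emph{-independent} bound $|F_m(z)|>1$ whenever $|z+z^{-1}|=|\lambda-c|>2$, so you need neither Lemma~\ref{lem:F_est1} nor the asymptotic estimates of Corollary~\ref{cor:FFlognlogm} for this part. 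From that uniform bound and $|F_m(z)F_m(w)|=1$ you immediately get that \emph{at most one} of $|\lambda-c|$, $|\lambda+c|$ can exceed $2$ --- a genuine partial result that your sketch does not actually extract. The real gap is the converse step: to rule out the remaining case, you would need that $\Im\lambda\ne 0$ and $|\lambda+c|\le 2$ force $|F_m(w)|\ge 1$, and that is false. By Lemma~\ref{lem:Fanotherbound}(a), $\tilde F_m$ is Herglotz, so all its zeros are real, and $|\tilde F_m(\zeta)|$ can be made arbitrarily small by taking $\Im\zeta>0$ small with $\Re\zeta$ near a real zero; there is therefore no $m$-uniform lower bound on $|F_m(w)|=|\tilde F_m(\lambda+c)|$ over non-real $\lambda$. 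The reciprocal constraint and the modulus/Herglotz estimates alone simply cannot confine both $z$ and $w$ to the lens. Whatever proves the conjecture must invoke a structural property coupling $z$ and $w$ through the real shift $2c$ beyond these ingredients --- an analogue for $c\ne0$ of the $r_m^{(1)},r_m^{(2)}$ factorization of Lemma~\ref{lem:beta_factorisation}, as you suggest, or something else entirely. Your closing assessment, that the argument cannot be pushed through with the tools visible in the paper and that the statement is open for good reason, is the correct one.
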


Before proceeding to the asymptotic analysis of $\spec{\cA_{m;c}}$, we start, as in the case $c=0$, with some numerical experiments. The choice $c=\sqrt{5}/2$ in Figure~\ref{fig:pict3} is intended to yield generic behaviour.

\begin{figure}[!htb]
\begin{center}
\fbox{\includegraphics{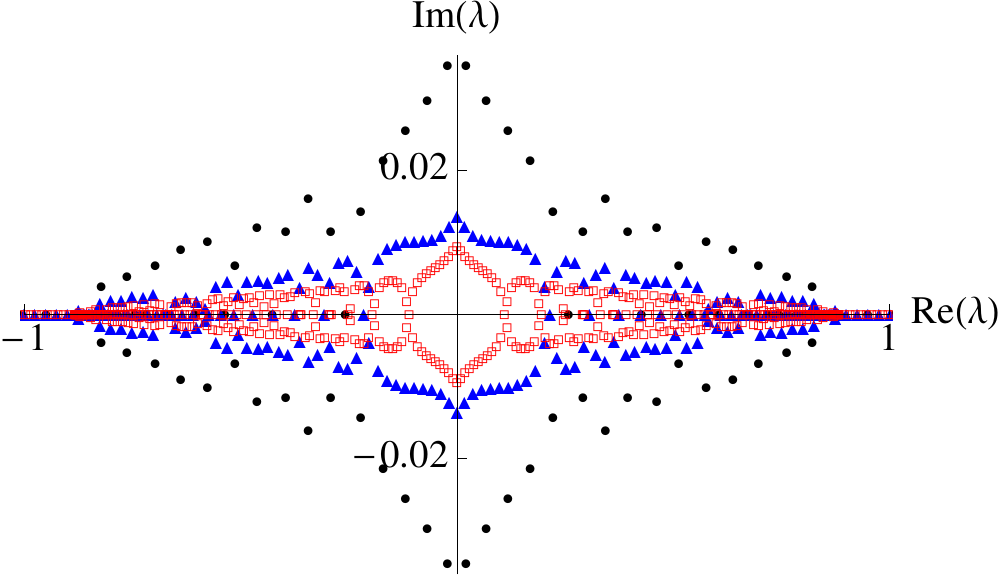}}
\caption{\small $\spec{\cA_{m;c}}$ for $c=\sqrt{5}/2$ and $m=100$ (black circles), $m=250$ (blue triangles) and $m=500$ (red squares). Some real eigenvalues with $2-c<|\lambda|<2+c$ are not shown. \label{fig:pict3}}
\end{center}
\end{figure}

One can see that for $c\ne 0$ the spectral picture is more complicated: there are both real and non-real eigenvalues, and it is implausible that the eigenvalues for a fixed $c$ lie on a particular curve even after scaling in the vertical direction. However, one observes some common features for all values of $c$ if one superimposes the spectra of $\spec{\cA_{m;c}}$, with the imaginary parts scaled by $2m$ as before; see Figure \ref{fig:pict4}.

\begin{figure}[!htb]
\begin{center}
\fbox{\includegraphics{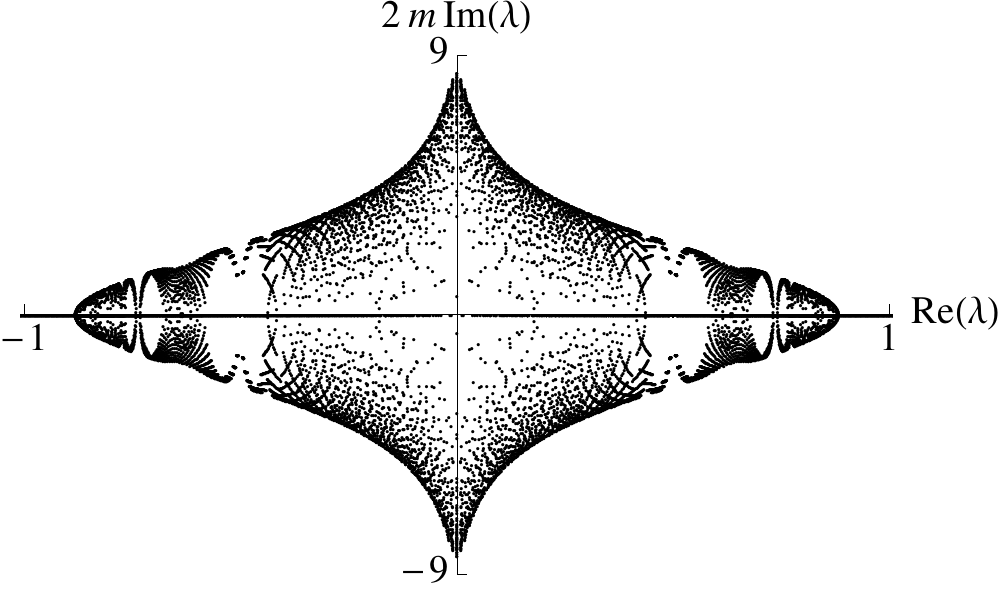}}
%
%
%
%
%
%
\caption{\small $\bigcup_{m=150}^{250}\spec{\cA_{m;c}}$ for $c=\sqrt{5}/2$, using the coordinates $(\Re(\lambda), 2m\,\Im(\lambda))$.  Some real eigenvalues with $2-c<|\lambda|<2+c$ are not shown. \label{fig:pict4}}
\end{center}
\end{figure}

We see that the spectra have a common bounding curve, although the behaviour in the interior differs for different $c$. In the rest of this section we deduce an explicit expression for the bounding curve. Our main theorem is as follows. We discuss its optimality at the end of the section.

\begin{theorem}\label{th:asymptcne0} Let $0<c<2$, $n=m=N/2\to\infty$. The eigenvalues of $\cA_{m;c}$ that satisfy
$0<|\Re(\lambda)|< 2-c$ also satisfy
\[
|\Im(\lambda)|\le \frac{\Lambda_c(|\Re(\lambda)|)}{2m}+o\left(m^{-1}\right),
\]
where
\[
\Lambda_c(u):=X_{c,u}^{-1}\left(\tan\left(\frac{1}{2}\arccos\left(\frac{u-c}{2}\right)\right)\tan\left(\frac{1}{2}\arccos\left(\frac{u+c}{2}\right)\right)\right),
\]
and $X_{c,u}^{-1}$ is the inverse of the monotonic increasing analytic function $X_{c,u}:(0,\infty)\to (0,1)$ defined by
\[
 X_{c,u}(v):=\tanh\left(\frac{v}{2\sqrt{4-(u-c)^2}}\right)\tanh\left(\frac{v}{2\sqrt{4-(u+c)^2}}\right).
 \]
\end{theorem}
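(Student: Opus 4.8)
The plan is to follow the template established in the proof of Theorem~\ref{th:c=0}, but now with $c\neq 0$ so that $z$ and $w$ are genuinely different. By Lemma~\ref{th:cne0symm} it suffices to locate the non-real eigenvalues in the first quadrant $\Re(\lambda)>0$, $\Im(\lambda)>0$. By Theorem~\ref{th:main1}(c), such an eigenvalue satisfies $F_m(z)F_n(w)=-1$ with $m=n$ and, by \eqref{eq:z,w>1}, $|z|>1$, $|w|>1$. First I would make the ansatz, parallel to \eqref{eq:zanzats1}--\eqref{eq:sanzats1},
\[
z=\er^{\ir\theta_1+s_1/m},\qquad w=\er^{\ir\theta_2+s_2/m},
\]
with $\theta_1,\theta_2\in(0,\pi)$ and $s_1,s_2>0$ admitting asymptotic expansions in powers of $1/m$. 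Substituting into \eqref{zw_subs} and separating real and imaginary parts, the conditions $\lambda-c=z+z^{-1}$ and $\lambda+c=w+w^{-1}$ give, to leading order,
\[
u-c=2\cos\theta_1,\qquad u+c=2\cos\theta_2,
\]
so $\theta_1=\arccos((u-c)/2)$, $\theta_2=\arccos((u+c)/2)$ are determined by $u=\Re(\lambda)$; and the imaginary parts give, with $v=2m\,\Im(\lambda)$,
\[
\frac{v}{2m}=2\sin\theta_1\,\sinh(s_1/m)(1+o(1))=2\sin\theta_2\,\sinh(s_2/m)(1+o(1)),
\]
which forces $s_1\sqrt{4-(u-c)^2}=s_2\sqrt{4-(u+c)^2}+o(1)$, i.e. $v = 2 s_1 \sqrt{4-(u-c)^2}+o(1)$ and similarly with $s_2$. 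This pins down $s_1,s_2$ in terms of $v$.

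Next I would feed these expansions into the modulus of the eigenvalue equation. Using Lemma~\ref{lem:F_est1} (or, more precisely, the exact identity $F_m(\er^{s+\ir\theta})$ has modulus asymptotic to $\er^s\tanh(ms)/\bigl(\text{something}\to 1\bigr)$ as $m\to\infty$ with $s=O(1/m)$ and $ms$ bounded), the condition $|F_m(z)F_m(w)|=1$ becomes, to leading order,
\[
\er^{s_1/m}\tanh(s_1)\cdot\er^{s_2/m}\tanh(s_2) = 1 + o(1).
\]
Wait --- more carefully, writing $z=\er^{s_1/m+\ir\theta_1}$ one has $z^m=\er^{s_1+\ir m\theta_1}$, so $|z^m-z^{-m}|$ oscillates; the clean statement comes from the argument (phase) condition fixing $m\theta_j$ near multiples of $\pi$ (this yields the analogue of \eqref{eq:c=0Reasympt}, namely $\Re(\lambda)=\pm 2\cos(2\pi k/(2m+1))+o(m^{-1})$, from $\cos((2m+1)\theta_j)=1$), and then along that subsequence $|F_m(z)|\to \er^{s_1/m}\coth(s_1)\cdot(\text{hmm})$. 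The correct bookkeeping, following \eqref{eq:r2eqmodulus1}, is: take absolute values in $F_m(z)F_m(w)=-1$, use $|z^{\pm(m+1)}|=\er^{\pm(m+1)s_1/m}$ and the phase alignment to replace $|z^{m+1}-z^{-m-1}|/|z^m-z^{-m}|$ by $\coth(s_1)\cdot\er^{s_1/m}$ up to $o(1)$ (and $\er^{s_1/m}\to 1$), landing on
\[
\coth(s_1)\coth(s_2) = \left|\frac{(z+\ir)(w+\ir)}{(z-\ir)(w-\ir)}\right|^{\pm1}+o(1)
\]
--- no; actually the $-1$ on the right forces the reciprocal relation $\tanh(s_1)\tanh(s_2)=(\text{RHS})$. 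Taking the real part / phase of $F_m(z)F_m(w)=-1$ then yields, via the same half-angle-tangent manipulation as in \eqref{eq:r2eqmodulus1} (where $|(z+\ir)/(z-\ir)|=|\tan(\pi/4+\theta/2-\ir s/(2m))|\to\tan(\pi/4+\theta_1/2)$... but here without the $\pi/4$ shift since $c\neq0$ breaks the $z=w$ coincidence, giving instead $\tan(\theta_1/2)\tan(\theta_2/2)$). Putting the pieces together:
\[
\tanh\!\left(\frac{s_1}{m}\cdot m\right)\tanh(s_2)=\tan\!\left(\frac{\theta_1}{2}\right)\tan\!\left(\frac{\theta_2}{2}\right)+o(1),
\]
and substituting $s_1 = v/\bigl(2\sqrt{4-(u-c)^2}\bigr)$, $s_2=v/\bigl(2\sqrt{4-(u+c)^2}\bigr)$ and $\theta_1=\arccos((u-c)/2)$, $\theta_2=\arccos((u+c)/2)$ gives exactly
\[
X_{c,u}(v)=\tan\!\left(\tfrac12\arccos\tfrac{u-c}{2}\right)\tan\!\left(\tfrac12\arccos\tfrac{u+c}{2}\right),
\]
i.e. $v=\Lambda_c(u)$. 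The monotonicity of $X_{c,u}$ on $(0,\infty)$ with range $(0,1)$ is an elementary check (product of two strictly increasing $\tanh$'s), which guarantees $X_{c,u}^{-1}$ is well-defined; one also checks the right-hand side lies in $(0,1)$ for $0<u<2-c$ so that $\Lambda_c(u)$ is finite. Finally I would invoke Lemma~\ref{th:cne0symm} to extend from the first quadrant to all four, and note that the inequality (rather than equality) in the statement accounts for the fact that not every point of the limiting curve need be hit, and for eigenvalues on subsequences of $m$ where the phase condition is not exactly met --- the curve $v=\Lambda_c(u)$ is the envelope, hence an upper bound.

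The main obstacle is precisely the rigorous justification of the ansatz and of replacing the oscillating quantities $|z^m-z^{-m}|$ by their envelopes. As in Theorem~\ref{th:c=0}, a fully rigorous treatment requires a Rouché or contraction-mapping argument applied to $\beta_{m,m}(z,w)=0$ (equivalently to $F_m(z)F_m(w)+1=0$) in small complex disks around the predicted zeros, with careful uniform estimates on $\beta_{m,m}$ --- of the kind carried out in \cite[section~4]{EBD1}, \cite[section~2]{EBD2}, \cite{DjMi}. This is lengthy and, as in the previous theorem, I would relegate it, presenting only the asymptotic derivation in detail and remarking that the counting argument (the number of predicted eigenvalues must match $N=2m$, using \eqref{eq:c=0Reasympt}-type real-part asymptotics together with the real eigenvalues in $2-c<|\lambda|<2+c$) closes the gap. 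The one genuinely new subtlety compared with $c=0$ is that $z$ and $w$ are now independent, coupled only through $u$ (via \eqref{zw_subs}) and through the single scalar equation $F_m(z)F_m(w)=-1$; so one must verify that the two phase conditions $\cos((2m+1)\theta_1)=1$ and $\cos((2m+1)\theta_2)=1$ are compatible, which they are precisely because $\theta_1$ and $\theta_2$ are both functions of the single parameter $u$ and the combined phase condition on $F_m(z)F_m(w)$ is one real equation --- I would check this reduces consistently to the stated $\Re(\lambda)$-asymptotics.
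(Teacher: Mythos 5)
Your proposal correctly sets up the ansatz $z=\er^{\ir\theta_1+s_1/m}$, $w=\er^{\ir\theta_2+s_2/m}$ and the leading-order identifications of $\theta_1$, $\theta_2$, $s_1$, $s_2$ in terms of $u$ and $v$, which matches the paper's opening moves exactly. The gap lies in how you treat $F_m(z)F_m(w)=-1$. You take absolute values and then, to tame the oscillation of $|F_m(\er^{s/m+\ir\theta})|$ in $m\theta$, impose a ``phase alignment'' assumption, obtaining a single \emph{equality} $\tanh(s_1)\tanh(s_2)=\tan(\theta_1/2)\tan(\theta_2/2)+o(1)$, and then wave at this being an ``envelope, hence an upper bound''. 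This fails: once $c\ne0$ we have $\theta_1\ne\theta_2$, so demanding both $(2m+1)\theta_1$ and $(2m+1)\theta_2$ to be simultaneously near multiples of $2\pi$ is two conditions on the single real parameter $u$ and is generically impossible. Correspondingly, the non-real eigenvalues do \emph{not} lie on a single curve when $c\ne 0$ --- they fill a two-dimensional region (Figure~\ref{fig:pict6}) --- so an equality is the wrong kind of conclusion, and your passage from equality to inequality is asserted rather than derived.

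The paper's proof supplies the idea you are missing: it introduces an auxiliary variable $\zeta$ and decouples the eigenvalue condition into $F_m(z)=\zeta$ and $F_m(w)=-1/\zeta$. Each equation, upon taking moduli, rewriting $F_m(z)=\zeta$ as $z^{2m}=(z^{-1}-\zeta)/(z-\zeta)$, and applying Lemma~\ref{lem:circle}, constrains $\zeta$ to lie on a circle $C(a_1,\rho_1)$ resp.\ $C(a_2,\rho_2)$ in the $\zeta$-plane. Solvability is then exactly the condition that the two circles intersect, $|a_1-a_2|\le\rho_1+\rho_2$, which after the simplification \eqref{eq:intersect2}--\eqref{eq:intersect6} becomes $\tanh(s_0)\tanh(t_0)\le\tan(\theta/2)\tan(\phi/2)$ and hence $v\le\Lambda_c(u)+o(1)$. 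The inequality is therefore structural, not an afterthought: the phase mismatch between $F_m(z)$ and $F_m(w)$ is absorbed by the freedom in choosing $\zeta$ on the two circles, and the intersection constraint is what yields a filled region bounded by a curve rather than a curve alone. Without the $\zeta$-decoupling and the circle-intersection Lemma~\ref{lem:circle}, your argument has no mechanism to produce an inequality, and the phase-alignment assumption driving your computation is false.
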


\begin{remark} The hypothesis of Theorem \ref{th:asymptcne0} carefully avoids the need to use Conjecture \ref{conj:Relambdabound}.
\end{remark}

The proof depends on some classical facts about fractional linear transformations on the complex plane. Let $C(a,\rho)=\{\zeta\in\C:|\zeta-a|=\rho\}$ denote the circle in the complex plane with centre $a$ and radius $\rho$.

\begin{lemma}\label{lem:circle} Let $\xi\in\C\setminus\{0\}$, and let $\kappa>0$, $\kappa\ne 1$. The set
\[
S_{\xi, \kappa}:=\left\{\zeta\in\C: \left|\zeta-\xi^{-1}\right|=\kappa|\zeta-\xi|\right\}
\]
is the circle $C\!\left(a_{\xi, \kappa}, \rho_{\xi, \kappa}\right)$ with centre and radius  given by
\[
a_{\xi, \kappa}=\frac{\kappa\xi-\frac{1}{\kappa\xi}}{\kappa-\frac{1}{\kappa}},\qquad \rho_{\xi, \kappa}=\left|\frac{\xi-\frac{1}{\xi}}{\kappa-\frac{1}{\kappa}}\right|.
\]
\end{lemma}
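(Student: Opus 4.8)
The plan is to prove the statement by a direct computation, turning the defining relation of $S_{\xi,\kappa}$ into the standard equation of a circle; the hypothesis $\kappa\ne 1$ is exactly what makes this possible. Since both sides of $|\zeta-\xi^{-1}|=\kappa|\zeta-\xi|$ are non-negative, $S_{\xi,\kappa}$ coincides with the zero set of $|\zeta-\xi^{-1}|^2-\kappa^2|\zeta-\xi|^2$. Expanding each term via $|\zeta-\eta|^2=|\zeta|^2-2\Re(\bar\eta\zeta)+|\eta|^2$ and collecting, the coefficient of $|\zeta|^2$ is $1-\kappa^2$, which is non-zero precisely because $\kappa>0$ and $\kappa\ne 1$. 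Dividing through by $1-\kappa^2$ and completing the square puts the equation into the form $|\zeta-a|^2=\rho^2$ with
\[
a=\frac{\xi^{-1}-\kappa^2\xi}{1-\kappa^2},\qquad \rho^2=|a|^2-\frac{|\xi|^{-2}-\kappa^2|\xi|^2}{1-\kappa^2}.
\]

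The remaining work is to simplify these two expressions to the stated closed forms. For the centre, dividing numerator and denominator by $\kappa$ (writing $\xi^{-1}=\kappa^{-1}(\kappa\xi)^{-1}$ and $\kappa^2\xi=\kappa(\kappa\xi)$) gives at once $a=(\kappa\xi-(\kappa\xi)^{-1})/(\kappa-\kappa^{-1})=a_{\xi,\kappa}$. For the radius I would substitute this value of $a$, expand $|\xi^{-1}-\kappa^2\xi|^2$ using $\xi^{-1}\bar\xi+\xi\,\overline{\xi^{-1}}=2\Re(\xi^2)/|\xi|^2$, put everything over the common denominator $(1-\kappa^2)^2$, and observe that the numerator collapses to $\kappa^2\bigl(|\xi|^2+|\xi|^{-2}-2\Re(\xi^2)/|\xi|^2\bigr)=\kappa^2|\xi-\xi^{-1}|^2$. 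Since $(1-\kappa^2)^2=\kappa^2(\kappa-\kappa^{-1})^2$, this yields $\rho^2=|\xi-\xi^{-1}|^2/(\kappa-\kappa^{-1})^2=\rho_{\xi,\kappa}^2$, completing the proof.

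A convenient cross-check — and a way to motivate the formulas before doing the algebra — is to note that the two points of $S_{\xi,\kappa}$ lying on the line through $\xi^{-1}$ and $\xi$ are the points dividing the segment $[\xi^{-1},\xi]$ internally and externally in the ratio $\kappa:1$, namely $(\xi^{-1}+\kappa\xi)/(1+\kappa)$ and $(\xi^{-1}-\kappa\xi)/(1-\kappa)$; a short computation shows their midpoint is $a_{\xi,\kappa}$ and half their distance is $\rho_{\xi,\kappa}$, so this pair is a diameter of the circle obtained above. The only place needing a word of care is the degenerate case $\xi=\pm1$, where $\xi=\xi^{-1}$, the defining relation forces $\zeta=\xi$, and the stated radius is correctly $\rho_{\xi,\kappa}=0$; in all other cases $\rho_{\xi,\kappa}>0$ and $S_{\xi,\kappa}$ is a genuine circle. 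There is no substantive obstacle beyond the bookkeeping in the radius simplification, which is the one computation I would carry out in full detail.
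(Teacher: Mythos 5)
Your proof is correct and follows the same route as the paper: square both sides, expand $|\cdot|^2$, use $\kappa\ne 1$ to divide by $1-\kappa^2$, complete the square, and simplify the resulting centre and radius to the stated forms. The Apollonius-diameter cross-check and the remark on $\xi=\pm1$ are sensible additions, but the core argument is the same.
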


\begin{proof}[Proof of Lemma \ref{lem:circle}]
Let  $\kappa|\zeta-\xi|=\left|\zeta-\xi^{-1}\right|$. Taking squares on both sides, expanding and collecting, we have
\[
\begin{split}
\kappa^2|\zeta|^2-2\kappa^2\Re\left(\zeta\overline{\xi}\right)+\kappa^2|\xi|^2&=|\zeta|^2-2\Re\left(\zeta\overline{\xi^{-1}}\right)+|\xi|^{-2}\\
&\Updownarrow\\
|\zeta|^2-2\Re\left(\zeta\cdot\frac{\overline{\kappa^2\xi-\zeta^{-1}}}{\kappa^2-1}\right)&=\frac{|\xi|^{-2}-\kappa^2|\xi|^2}{\kappa^2-1}\\
&\Updownarrow\\
\left|\zeta-\frac{\kappa\xi-\frac{1}{\kappa\xi}}{\kappa-\frac{1}{\kappa}}\right|^2&=\frac{\left|\kappa^2\xi-\zeta^{-1}\right|^2}{(\kappa^2-1)^2}+\frac{|\xi|^{-2}-\kappa^2|\xi|^2}{\kappa^2-1}\\
&=\frac{\left|\xi-\frac{1}{\xi}\right|^2}{\left(\kappa-\frac{1}{\kappa}\right)^2},
\end{split}
\]
which implies the result.
\end{proof}

\begin{proof}[Proof of Theorem \ref{th:asymptcne0}] As in the case $c=0$, we rescale the eigenvalues of $\cA_{m;c}$
by means of the formula $\lambda=u+\ir v/(2m)$ as in \eqref{eq:lamscaling}. We restrict ourselves to the first quadrant
$u,v>0$, because bounds in the other quadrants can be obtained by symmetry.
The hypotheses of the theorem imply that $0<u<2-c$. The strategy is to take  $c\in(0,2)$ and $u$ to be fixed, and to find necessary conditions on $v$ for $\lambda$ to be an eigenvalue.

We start our formal asymptotic procedure by assuming the ansatzes
\begin{equation}\label{eq:zansatz2}
z=\er^{\ir\theta+s/m},
\end{equation}
 \begin{equation}\label{eq:wansatz2}
w=\er^{\ir\phi+t/m},
\end{equation}
where
\begin{equation}\label{eq:sansatz2}
 s=\sum_{k=0}^\infty \frac{s_k}{m^k},
 \end{equation}
 and
\begin{equation}\label{eq:tansatz2}
 t=\sum_{k=0}^\infty \frac{t_k}{m^k},
 \end{equation}
 together with \eqref{zw_subs}. By \eqref{eq:z,w>1}  and conditions on $u,v$ we can assume $s,t>0$ and $0<\theta,\phi<\pi$.

By separating the real and imaginary parts in the first relation \eqref{zw_subs} using \eqref{eq:zansatz2}  we get
\begin{align*}
2\cosh(s/m)\cos(\theta)&=u-c,\\
2\sinh(s/m)\sin(\theta)&=v/(2m).
\end{align*}
Taking account of \eqref{eq:sansatz2}, these yield
\begin{align*}
\cos(\theta)&\sim(u-c)/2\in (-1,1),\\
\sin(\theta)&\sim v/(4s_0)\in (0,1],
\end{align*}
to leading order in $m^{-1}$. From now on, we shall write these and similar asymptotic equalities as if they were exact identities. The first equation determines $\theta$ and the second then determines $s_0$. Indeed, we obtain
\begin{align}
\theta&= \arccos((u-c)/2)\in (0,\pi), \label{thetaformula}\\
s_0&= \frac{v}{4\sin(\theta)}=\frac{v}{2\sqrt{4-(u-c)^2}}>0 .\label{sformula}
\end{align}

Treating similarly the second relation in \eqref{zw_subs}, and using \eqref{eq:wansatz2} and \eqref{eq:tansatz2}, we obtain
\begin{align*}
2\cosh(t/m)\cos(\phi)&=u+c,\\
2\sinh(t/m)\sin(\phi)&=v/(2m),
\end{align*}
then to leading order in $m^{-1}$ one has
\begin{align*}
\cos(\phi)&=(u+c)/2\in (-1,1),\\
\sin(\phi)&= v/(4t_0)\in (0,1],
\end{align*}
which yield
\begin{align}
\phi&= \arccos((u+c)/2)\in (0,\pi), \label{phiformula}\\
t_0&= \frac{v}{4\sin(\phi)}=\frac{v}{2\sqrt{4-(u+c)^2}}>0, \label{tformula}
\end{align}
for $u\in(0,2-c)$, as assumed in the theorem.

If $\lambda$ is a non-real eigenvalue of $\cA_{m;c}$ then $z$ and $w$ should satisfy \eqref{F_condition}, namely $F_m(z)F_m(w)=-1$.
Our strategy will be to consider $u\in(0,2-c)$ fixed and find necessary conditions on $v$ for \eqref{F_condition} to hold. We re-write \eqref{F_condition}
as
\begin{align}
F_m(z)&= \zeta,\label{zetaformula1}\\
F_m(w)&= -1/\zeta, \label{zetaformula2}
\end{align}
with $\zeta\in\C$. One may rewrite \eqref{zetaformula1} in the form
\[
z^{2m}=\frac{z^{-1}-\zeta}{z-\zeta}.
\]
Taking absolute values and using \eqref{eq:zansatz2} and  \eqref{eq:sansatz2},  we obtain
\[
\rme^{2s_0}=\left| \frac{\rme^{-\ir\theta}-\zeta}{\rme^{\ir\theta}-\zeta}\right|,
\]
up to a correction that is $o(1)$ as $m\to\infty$.
Applying Lemma~\ref{lem:circle}, we deduce that $\zeta\in C(a_1,\rho_1)$, where
\begin{equation}\label{eq:circle1}
\begin{split}
a_1&=\frac{\er^{2s_0+\ir\theta}-\er^{-2s_0-\ir\theta}}{\er^{2s_0}-\er^{-2s_0}}=\cos(\theta)+\ir\sin(\theta)\coth(2s_0), \\
 \rho_1&=\left|\frac{\er^{\ir\theta}-\er^{-\ir\theta}}{\er^{2s_0}-\er^{-2s_0}}\right|=\frac{\sin(\theta)}{\sinh(2s_0)}.
\end{split}
\end{equation}

We deal in a similar manner with the equation \eqref{zetaformula1}, rewriting it as
\[
w^{2m}=\frac{w^{-1}+\zeta^{-1}}{w+\zeta}.
\]
Again taking absolute values, and using \eqref{eq:wansatz2} and \eqref{eq:tansatz2}, we obtain
\[
\er^{t_0}=\left| \frac{\rme^{\ir\phi}+\zeta}{\rme^{-\ir\phi}+\zeta}\right|,
\]
with a correction that is $o(1)$. This yields $\zeta\in C(a_2,\rho_2)$, where
\begin{equation}\label{eq:circle2}
\begin{split}
a_2&=\frac{-\er^{2t_0-\ir\phi}+\er^{-2t_0+\ir\phi}}{\er^{2t_0}-\er^{-2t_0}}=-\cos(\phi)+\ir\sin(\phi)\coth(2t_0),\\
\rho_2&=\left|\frac{-\er^{-\ir\phi}+\er^{\ir\phi}}{\er^{2t_0}-\er^{-2t_0}}\right|=\frac{\sin(\phi)}{\sinh(2t_0)}.
\end{split}
\end{equation}

In order to obtain a solution of the system \eqref{zetaformula1}, \eqref{zetaformula2}, the circles \eqref{eq:circle1} and \eqref{eq:circle2} must intersect. Thus,
we get
\begin{equation}\label{eq:intersect1}
|a_1-a_2|^2 \le (\rho_1+\rho_2)^2.
\end{equation}
Substituting the explicit expressions for the centres and radii of the circles, we obtain after some simplifications,
\begin{equation}\label{eq:intersect2}
\frac{2+2\cos(\theta)\cos(\phi)}{\sin(\theta)\sin(\phi)}\le \tanh(s_0)\tanh(t_0)+\coth(s_0)\coth(t_0).
\end{equation}
We substitute
\begin{equation}\label{eq:intersect3}
X=\tanh(s_0)\tanh(t_0),\qquad Z=\frac{2+2\cos(\theta)\cos(\phi)}{\sin(\theta)\sin(\phi)},
\end{equation}
into \eqref{eq:intersect2},
noting that $0<X<1$ and $Z>2$.
This yields the inequality $X+1/X\ge Z$ and then
\begin{equation}\label{eq:intersect4}
X\le \frac{Z-\sqrt{Z^2-4}}{2}.
\end{equation}
(We can ignore the other interval in the solution of the inequality because $X\le 1$.) With the help of some trigonometry, the inequality \eqref{eq:intersect4} can be simplified to
\begin{equation}\label{eq:intersect5}
\tanh(s_0)\tanh(t_0)\le \tan\left(\frac{\theta}{2}\right)\tan\left(\frac{\phi}{2}\right).
\end{equation}

We now substitute the formulae \eqref{thetaformula}--\eqref{tformula}  into \eqref{eq:intersect5} to obtain
\begin{equation}\label{eq:intersect6}
\begin{split}
&\tanh\left(\frac{v}{2\sqrt{4-(u-c)^2}}\right)\tanh\left(\frac{v}{2\sqrt{4-(u+c)^2}}\right)\\
&\quad \le \tan\left(\frac{1}{2}\arccos\left(\frac{u-c}{2}\right)\right)\tan\left(\frac{1}{2}\arccos\left(\frac{u+c}{2}\right)\right)
\end{split}
\end{equation}

The left-hand side of \eqref{eq:intersect6} is a monotone-increasing function of $v$, so it has an inverse. The statement of the theorem now follows by extending the result by symmetry to other quadrant.
\end{proof}

We illustrate the results in Figure \ref{fig:pict6}. It appears from the figure that the bounding curve found in Theorem~\ref{th:asymptcne0} is optimal, but we have no proof of this. It also seems plausible that for every choice of $c\in(0,2)$ the union of the non-real spectra of $\cA_{m;c}$ over all $m$ is dense in the region inside the bounding curve. Further computations suggest that the apparent gap around the point $0$ or even around the imaginary axis is filled at a logarithmic rate as $m$ increases. We investigate this further in Section~\ref{sec:7}. We also produced a video showing how the spectrum changes with $c$ for fixed $m$ (see Appendix), from which further very complex structure is apparent. This involves number-theoretic properties of $\arccos(c/2)$; see Section \ref{sec:7}. We hope to investigate these in a later paper.

\begin{figure}[!htb]
\begin{center}
\fbox{\includegraphics{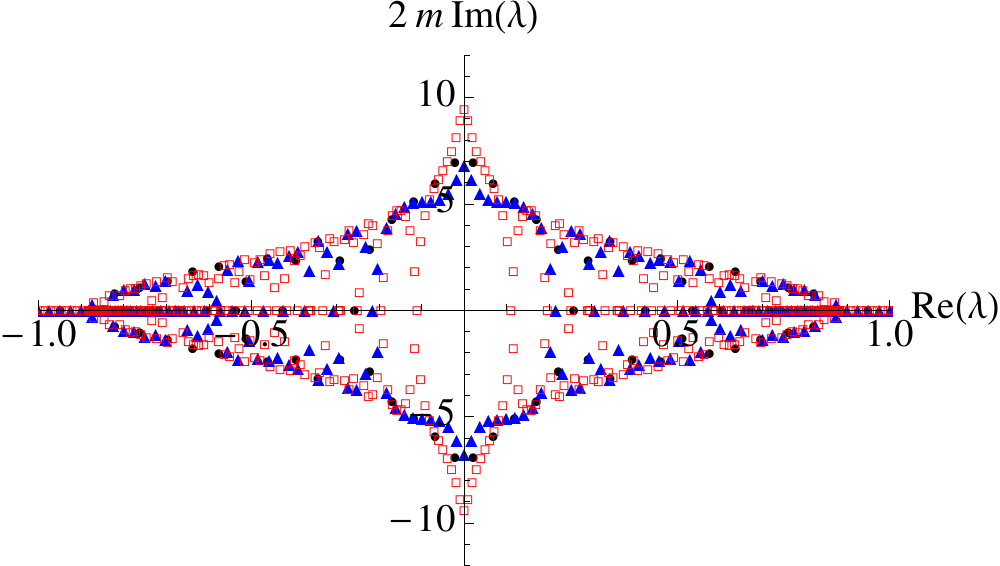}}
\caption{\small $\spec{\cA_{m;c}}$ for $c=\sqrt{5}/2$, and $m=100$ (black circles), $m=250$ (blue triangles) and $m=500$ (red squares),
drawn in coordinates $(\Re(\lambda), 2m\,\Im(\lambda))$ together with the graphs of  the functions $\pm \Lambda_c(\Re(\lambda))$ (solid black lines).  Some real eigenvalues with $2-c<|\lambda|<2+c$ are not shown. \label{fig:pict6}}
\end{center}
\end{figure}

\section{The case $c=0$, $m\ne n$}

If $\ell\in\N\cup\{0\}$ is fixed and $n=m+\ell$, then the matrices have size $N\times N$ where $N=2m+\ell$. The following theorem shows that the imaginary parts of the eigenvalues of the pencil $\cA_{m,n}$ depend very sensitively on the precise values of $m$ and $\ell$ by comparing the cases $\ell=0$ and $\ell=1$.
We define the algebraic multiplicity of an eigenvalue of a pencil to be the order of the zero of the corresponding determinant; see \eqref{eq:pdet}.

\begin{theorem}\label{n_is_m_plus_1_theorem}
If $n=m+1$, then every eigenvalue of $\cA_{m,n}$ is real and has geometric multiplicity $1$. The non-zero eigenvalues all have algebraic multiplicity $2$. The eigenvalue $0$ has algebraic multiplicity $1$ if $m$ is even and algebraic multiplicity $3$ if $m$ is odd.
\end{theorem}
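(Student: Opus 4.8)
The strategy is to exploit the explicit determinant identity from Lemma~\ref{sigtaulemma} together with the factorisation philosophy of Lemma~\ref{lem:beta_factorisation}, now applied to the unequal case $n=m+1$, $c=0$. Since $c=0$ we have $w=z$ throughout, and by Theorem~\ref{th:main1}(a) the eigenvalues correspond to zeros of $\beta_{m,m+1}(z,z)$, with the characteristic polynomial given up to an explicit nonvanishing factor by $p(\lambda)=\det(H_{N;0}-\lambda D_{m,m+1})$, $N=2m+1$. First I would write out $\beta_{m,m+1}(z,z)$ from \eqref{betdef} and seek a factorisation analogous to \eqref{qtildemmz}: I expect that $z^{2m+3}\beta_{m,m+1}(z,z)$ factors as a product of two polynomials $\rho_m^{(1)}(z)\rho_m^{(2)}(z)$ with $\overline{\rho_m^{(2)}(z)}=\rho_m^{(1)}(\bar z)$ and a reciprocity relation $\rho_m^{(j)}(z^{-1})=\pm z^{-\deg}\rho_m^{(j)}(z)$, exactly as in Lemma~\ref{lem:beta_factorisation}. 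The mismatch $n-m=1$ should make these two factors differ in degree by one, which is precisely the source of the asymmetry between the cases $\ell=0$ and $\ell=1$.

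Next, to prove \emph{reality} of the spectrum I would show that every zero $z$ of $\beta_{m,m+1}(z,z)$ satisfies $|z|=1$ or $z\in\R$; by Theorem~\ref{th:main1}(b) this forces $\lambda\in\R$. The cleanest route is the one used in the proof of Theorem~\ref{th:c=0}: from $\rho_m^{(j)}(z)=0$ derive an equation of the form $z^{2m+2}=(\text{Möbius function of }z)$, take moduli, and observe that $|z|^{2m+2}=1$ forces $|z|=1$ while the right-hand side equals $1$ only on a line (so $z\in\R$), with the finitely many exceptional $z=\pm 1,\pm\ir$ traced back to forbidden $\lambda$ via Lemma~\ref{th:location_rough}. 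An alternative, possibly slicker, argument: since $D_{m,m+1}$ has $m$ eigenvalues $+1$ and $m+1$ eigenvalues $-1$, the pencil $H_{N;0}-\lambda D_{m,m+1}$ has a ``$+1$ surplus'' on the negative side; one can try to realise the spectrum inside that of a genuinely self-adjoint problem via a rank-one-type comparison. I would pursue the explicit-factorisation route since it simultaneously delivers the multiplicity count.

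For the \emph{geometric multiplicity~$1$} claim, I would argue directly on $H_{N;0}v=\lambda D_{m,m+1}v$: since $D_{m,m+1}$ is invertible this is $D_{m,m+1}^{-1}H_{N;0}v=\lambda v$, and $D_{m,m+1}^{-1}H_{N;0}$ is a tridiagonal matrix with all sub- and super-diagonal entries equal to $\pm 1$ (hence nonzero); any such tridiagonal matrix has all eigenvalues geometrically simple, because a null vector of $(D_{m,m+1}^{-1}H_{N;0}-\lambda I)$ is determined up to scalar by its first entry through the three-term recurrence. For the \emph{algebraic multiplicities}, I would combine the factorisation with a counting argument: the polynomial $p(\lambda)$ has degree $N=2m+1$, the $z\leftrightarrow z^{-1}$ symmetry pairs up zeros of $\beta$ so that a generic $\lambda\ne 0$ arises from \emph{two} values of $z$ (one from $\rho_m^{(1)}$, its conjugate-partner from $\rho_m^{(2)}$, or a reciprocal pair), giving algebraic multiplicity $2$; while $\lambda=0$, corresponding to $z=\pm\ir$ (the self-reciprocal points with $z+z^{-1}=0$ — wait, $z+1/z=0$ gives $z=\pm\ir$), is where the degree bookkeeping leaves a remainder that depends on the parity of $m$. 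Concretely I would evaluate the order of vanishing of $\beta_{m,m+1}(z,z)$ at $z=\pm\ir$, or equivalently substitute $z=\ir y$ as in the proof of Lemma~\ref{z=iybound} and read off the order of the zero at $y=1$; a direct computation there should give order $1$ for $m$ even and order $3$ for $m$ odd, matching \eqref{eq:r2iy}-type identities. The total count $2\cdot(\text{number of nonzero eigenvalues})+(\text{mult. of }0)$ must equal $2m+1$, which is odd, so the multiplicity of $0$ is forced to be odd — this is a useful consistency check and essentially explains why only the parity of $m$ matters.

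\textbf{Main obstacle.} The genuinely delicate point is pinning down the algebraic multiplicity of the eigenvalue $0$ and its dependence on the parity of $m$. Establishing reality and geometric simplicity is fairly mechanical given the tools already in the paper, and the generic algebraic multiplicity $2$ follows from the $z\leftrightarrow z^{-1}$ pairing plus the conjugation symmetry \eqref{r1r2formula}. But the behaviour at $\lambda=0$ requires carefully expanding $\beta_{m,m+1}(z,z)$ (or the relevant factor) near the self-reciprocal point and tracking how the reciprocity relations \eqref{r1r2formula2}-analogues force a double zero that can collide with an extra simple zero precisely when $m$ is odd. I would handle this by an explicit local expansion in the variable $\zeta=z-\ir$ (or $y-1$ after the substitution $z=\ir y$), computing the first few Taylor coefficients and showing the vanishing of the constant, linear, and quadratic terms exactly when $m$ is odd; the parity dependence will enter through the sign $(-1)^m$ appearing, as in \eqref{eq:r2iy}, in the relevant polynomial.
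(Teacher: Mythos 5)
Your plan would eventually work, but it is considerably more roundabout than the paper's argument, and it contains a structural misprediction that obscures what is really going on. The paper's proof simply expands $\det(H_{2m+1}-\lambda D_{m,m+1})$ along the middle ($(m{+}1)$-th) row; that row has entries $1,\lambda,1$ straddling the two blocks, so the determinant factors as $p_{m,m+1}(\lambda)=-\lambda\,q_m(\lambda)\,q_m(-\lambda)=(-1)^{m+1}\lambda\,q_m(\lambda)^2$ with $q_m(\lambda)=\det(H_m-\lambda I_m)$. Every claim of the theorem then drops out: $q_m$ has simple real roots, the square gives algebraic multiplicity $2$ for each nonzero eigenvalue, and the multiplicity of $0$ is $1+2\cdot\mathrm{ord}_0(q_m)$, which is $1$ or $3$ according as $q_m(0)\ne 0$ (i.e.\ $m$ even) or $q_m(0)=0$ (i.e.\ $m$ odd). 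The ``parity must be odd since the total degree $2m+1$ is odd'' consistency check you mention is indeed automatic here.

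Your route through $\beta_{m,m+1}(z,z)$ works, but not in the way you predict. You expect a factorisation into two distinct conjugate polynomials $\rho_m^{(1)},\rho_m^{(2)}$ with $\overline{\rho_m^{(2)}(z)}=\rho_m^{(1)}(\bar z)$, analogous to Lemma~\ref{lem:beta_factorisation}. In fact a short computation gives
\[
\beta_{m,m+1}(z,z)=(z^{m+1}-z^{-m-1})\bigl[(z^{m+2}-z^{-m-2})+(z^m-z^{-m})\bigr]=\frac{z^2+1}{z}\,\bigl(z^{m+1}-z^{-m-1}\bigr)^2,
\]
and $(z^2+1)/z=z+z^{-1}=\lambda$ while $(z^{m+1}-z^{-m-1})/(z-z^{-1})$ is (up to sign) $q_m$ evaluated at $\lambda$. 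So the factorisation is a \emph{perfect square} times a single linear factor, not a product of two genuinely different conjugate pieces. This distinction is not cosmetic: it is exactly the perfect-square structure that forces all eigenvalues to be real with algebraic multiplicity $2$, in contrast to the $m=n$ case where the two conjugate factors $r_m^{(1)},r_m^{(2)}$ are distinct and produce complex-conjugate pairs. The ``main obstacle'' you flag — a careful local expansion at $z=\pm\ir$ to determine the multiplicity of $\lambda=0$ — simply dissolves once the perfect square is visible, since the order of vanishing at $\lambda=0$ is just $1+2\cdot\mathrm{ord}_0(q_m)$. Your geometric-multiplicity argument via the three-term recurrence is essentially the same as the paper's.
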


\begin{proof}
If one rewrites the eigenvalue equation as a second order recurrence relation, it is immediate that every eigenvalue has geometric multiplicity $1$.

If one expands
\begin{equation}
p_{m,m+1}(\lam)=\det(H_{2m+1}-\lam D_{m,m+1})\label{eq:pdet}
\end{equation}
along the $(m+1)$th row, one obtains
\begin{equation}
p_{m,m+1}(\lam)=-\lam\, q_m(\lam) q_m(-\lam)=
(-1)^{m+1}\lam\, q_m(\lam)^2
\label{n=m+1det}
\end{equation}
where
\[
q_m(\lam)=\det(H_m-\lam I_m).
\]
The roots of $q_m$ are the eigenvalues of $H_m$, given by $\lam_r=2\cos(\pi r/(m+1))$, where $1\leq r\leq m$, cf. \eqref{Heig}. Each eigenvalue of $H_m$ is real and has algebraic multiplicity $1$. Equation \eqref{n=m+1det} now implies each non-zero eigenvalue of $\cA_{m,m+1}$ is real and has algebraic multiplicity $2$. The algebraic multiplicity of $0$ depends on whether $q_m(0)$ vanishes; this happens if and only if $m$ is odd.
\end{proof}

\begin{example}\label{7times7example}
If $m=3$ and $n=4$ then $N=7$, and a simple calculation shows that $\cA_{3,4}$ has three distinct eigenvalues, namely $0$ and $\pm \sqrt{2}$, each of which has geometric multiplicity $1$. The eigenvalues $\pm \sqrt{2}$ both have algebraic multiplicity $2$ and $0$ has algebraic multiplicity $3$. These facts are all reflected in the non-trivial Jordan form of
$D^{-1}H$.\\
\end{example}

\section{The eigenvalue $0$ and small eigenvalues}\label{sec:7}

In this section we present some preliminary results about the part of the spectrum of $\cA_{m;c}$ that is close to $0$. The following is our main result.

\begin{theorem}\label{zerospectrum}
If $0\le c<\infty$ then $0\in \spec{\cA_{m,n;c}}$ if and only if
\begin{equation}\label{cvaluesfor0}
c=2\cos(\pi j/(N+1))
\end{equation}
where the integer $j$ satisfies $1\leq j\leq N=m+n$; this implies that $0<c<2$. If $0\notin \spec{\cA_{m,n;c}}$ then
\begin{equation}
d_{m,n;c}:=\dist(\spec{\cA_{m,n;c}},0)\geq \del_{N;c},\label{resolvnorm}
\end{equation}
where 
\begin{equation}
\del_{N;c}:=\min_{1\leq j\leq N}
|c-2\cos(\pi j/(N+1))|.\label{def:delNc}
\end{equation}
\end{theorem}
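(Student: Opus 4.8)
The plan is to compute the characteristic polynomial at $\lambda=0$ directly. Recall from \eqref{pdef} that $p_{n,m;c}(\lambda)=\det(H_{N;c}-\lambda D_{n,m})$, so setting $\lambda=0$ simply gives $p_{n,m;c}(0)=\det(H_{N;c})$. Now $H_{N;c}=H_{N;0}+cI$, and by \eqref{Heig} the eigenvalues of $H_{N;0}$ are $\mu_j=2\cos(\pi j/(N+1))$ for $1\le j\le N$, so the eigenvalues of $H_{N;c}$ are $c+\mu_j=c-2\cos(\pi j/(N+1))$ — note the sign is irrelevant since the set $\{2\cos(\pi j/(N+1)):1\le j\le N\}$ is symmetric about $0$. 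Hence
\[
p_{n,m;c}(0)=\det(H_{N;c})=\prod_{j=1}^{N}\bigl(c-2\cos(\pi j/(N+1))\bigr).
\]
Therefore $0\in\spec{\cA_{m,n;c}}$ (i.e. $H_{N;c}-\lambda D_{n,m}$ is singular at $\lambda=0$) if and only if this product vanishes, i.e. if and only if $c=2\cos(\pi j/(N+1))$ for some $1\le j\le N$. Since $0<\pi j/(N+1)<\pi$ in that range, $\cos(\pi j/(N+1))\in(-1,1)$, so such a $c$ lies in $(-2,2)$; combined with the standing hypothesis $c\ge 0$, this gives $0<c<2$. (The value $c=0$ would require $\cos(\pi j/(N+1))=0$, i.e. $N+1=2j$, which is possible only when $N$ is odd; but then $0$ is an eigenvalue of $H_N$ and the argument still goes through — or one simply notes $0\notin(0,2)$ is excluded by the claim, so no contradiction arises as long as we read the statement as an equivalence for $c$ in the stated range.)

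For the quantitative lower bound, suppose $0\notin\spec{\cA_{m,n;c}}$, equivalently $H_{N;c}$ is invertible. The eigenvalues of the pencil $\cA_{m,n;c}$ coincide with those of the matrix $D_{n,m}^{-1}H_{N;c}$ (this is used already in the proof of Lemma~\ref{th:location_rough}), hence with the reciprocals of the nonzero eigenvalues of $H_{N;c}^{-1}D_{n,m}$. Thus
\[
d_{m,n;c}=\dist(\spec{\cA_{m,n;c}},0)=\frac{1}{\norm H_{N;c}^{-1}D_{n,m}\norm}\ \ge\ \frac{1}{\norm H_{N;c}^{-1}\norm\,\norm D_{n,m}\norm}=\frac{1}{\norm H_{N;c}^{-1}\norm},
\]
since $\norm D_{n,m}\norm=1$. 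Finally $\norm H_{N;c}^{-1}\norm$ is the reciprocal of the smallest modulus of an eigenvalue of $H_{N;c}$, which is exactly $\bigl(\min_{1\le j\le N}|c-2\cos(\pi j/(N+1))|\bigr)^{-1}=\del_{N;c}^{-1}$. Hence $d_{m,n;c}\ge\del_{N;c}$, as claimed.

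The only slightly delicate point is the step $d_{m,n;c}=1/\norm H_{N;c}^{-1}D_{n,m}\norm$: one must be careful that $0$ is an eigenvalue of the \emph{pencil} $\cA_{m,n;c}$ precisely when it is an eigenvalue of $D_{n,m}^{-1}H_{N;c}$, and that "distance from the spectrum to $0$" translates correctly into a norm bound on the inverse. Since $D_{n,m}$ is invertible and $\lambda\in\spec{\cA_{m,n;c}}\setminus\{0\}$ iff $1/\lambda\in\spec{(D_{n,m}^{-1}H_{N;c})^{-1}}=\spec{H_{N;c}^{-1}D_{n,m}}$, and $0\notin\spec{\cA_{m,n;c}}$ means $0$ is not in the pencil spectrum at all, the nonzero eigenvalues $\lambda$ satisfy $|\lambda|\ge 1/\norm H_{N;c}^{-1}D_{n,m}\norm$, giving the bound. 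I expect the main (minor) obstacle to be making this reciprocal-eigenvalue bookkeeping airtight; the rest is the elementary computation $p_{n,m;c}(0)=\det(H_{N;c})$ together with \eqref{Heig}.
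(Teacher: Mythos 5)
Your proof is essentially the paper's own argument. For the first part, the paper sets $A_{m,n;c}=D_{m,n}^{-1}(H_{N}+cI)$ and observes that $0$ is an eigenvalue of the pencil iff $H_{N;c}=H_N+cI$ is singular, then invokes \eqref{Heig}; you do the same via $p_{n,m;c}(0)=\det(H_{N;c})$. For the second part, the paper computes $\norm A_{m,n;c}^{-1}\norm^{-1}=\norm(H_N+cI)^{-1}\norm^{-1}=\del_{N;c}$ (using that $D_{m,n}$ is unitary and $H_N$ self-adjoint) and then runs a Neumann series to show $|\lam|<\del_{N;c}\Rightarrow\lam\notin\spec{\cA_{m,n;c}}$; your reciprocal-eigenvalue/spectral-radius argument is the same estimate phrased differently.

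Two small corrections. First, the displayed equality
\[
d_{m,n;c}=\frac{1}{\norm H_{N;c}^{-1}D_{m,n}\norm}
\]
is not correct in general: $d_{m,n;c}=1/\rho(H_{N;c}^{-1}D_{m,n})$, and since $H_{N;c}^{-1}D_{m,n}$ is not normal, $\rho\leq\norm\cdot\norm$ need not be an equality. You only need the inequality $\geq$, and you do in fact state and use the correct inequality in your closing paragraph, so the main display should simply be changed from $=$ to $\geq$. Second, the parenthetical worry about $c=0$ is a genuine edge case that the paper's stated implication "$0<c<2$" does not handle: when $N$ is odd, $j=(N+1)/2$ gives $2\cos(\pi j/(N+1))=0$, so $0\in\spec{\cA_{m,n;0}}$. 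The correct conclusion is $0\le c<2$ (or one must assume $N$ even). Your sign slip $c+\mu_j=c-2\cos(\pi j/(N+1))$ is harmless, as you say, because $\{2\cos(\pi j/(N+1)):1\le j\le N\}$ is symmetric about $0$.
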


\begin{proof}
The proof essentially continues the proof of Lemma \ref{th:location_rough}(b). If one puts $A_{m,n;c}=D_{m,n}^{-1}(H_{N}+cI)$, then $\spec{\cA_{m,n;c}}=\spec{A_{m,n;c}}$. Since $D_{m,n}$ is invertible, $0\in \Spec(A_{m,n;c})$ if and only if $H_{N}+cI$ is not invertible, or equivalently if and only if $-c\in\Spec(H_{N})$. The spectrum of $H_{N}$ is given by \eqref{Heig}  and yields the first statement of the theorem.

If $0\notin \spec{\cA_{m,n;c}}$ then the invertibility of $A_{m,n;c}$ and
$\norm D_{m,n}^{\pm 1}\norm =1$ together yield
\[
\norm A_{m,n;c}^{-1}\norm^{-1}=
\norm (H_{N}+cI)^{-1}\norm^{-1}=\del_{N;c};
\]
this uses the self-adjointness of $H_{N}$ and its known spectrum \eqref{Heig}.
If $|\lam |<\del_{N;c}$ then
\[
\begin{split}
\norm (\lam I-A_{m,n;c})^{-1}\norm
&= \norm A_{m,n;c}^{-1}(I-\lam A_{m,n;c}^{-1})^{-1}\norm\\
&\leq \norm A_{m,n;c}^{-1}\norm\sum_{k=0}^\infty |\lam/\del_{N;c}|^k\\
&<\infty.
\end{split}
\]
Therefore $\lam\notin \spec{\cA_{m,n;c}}$.
\end{proof}

Theorem \ref{zerospectrum} indicates that the rate at which the smallest absolute value eigenvalue converges to zero as
$N$ increases is determined by the Diophantine properties of $\pi^{-1} \arccos(c/2)$.  Numerical experiments indicate that the actual behaviour of  $d_{m,n;c}$  is similar to
$\del_{N;c}$, as shown in Figures \ref{fig:pict7} and \ref{fig:pict8}.

\begin{figure}[!htb]
\begin{center}
\fbox{\includegraphics{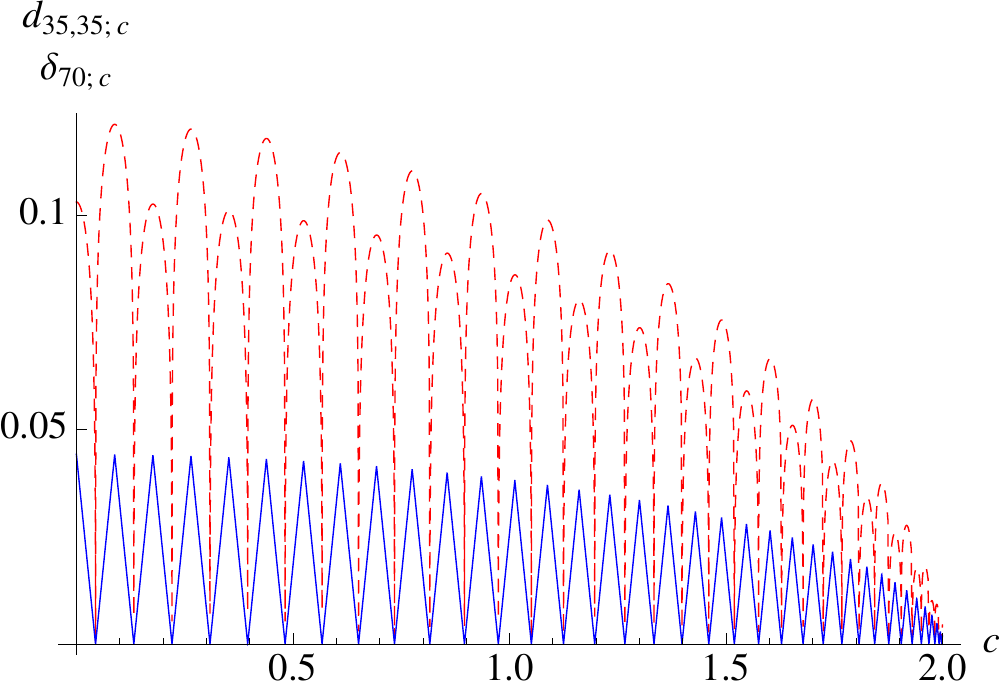}}
\caption{\small $d_{35,35;c}$ (dashed red line) and $\del_{70;c}$ (solid blue line), drawn as functions of $c$. Note the values \eqref{cvaluesfor0} (with $N=70$) where both functions vanish. \label{fig:pict7}}
\end{center}
\end{figure}

\begin{figure}[!htb]
\begin{center}
\fbox{\includegraphics{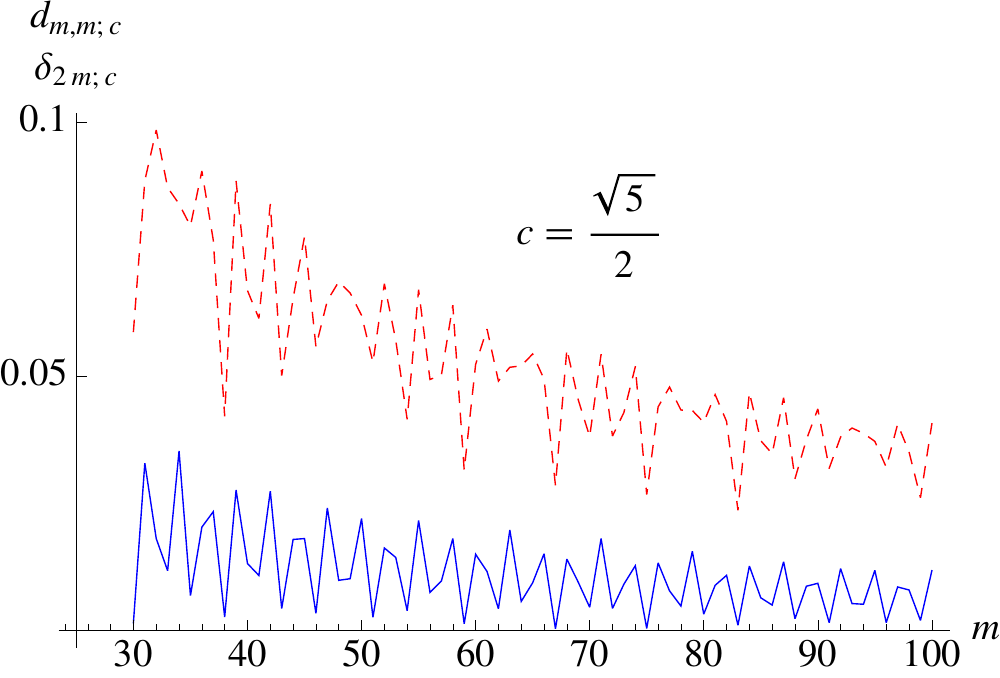}}

\

\fbox{\includegraphics{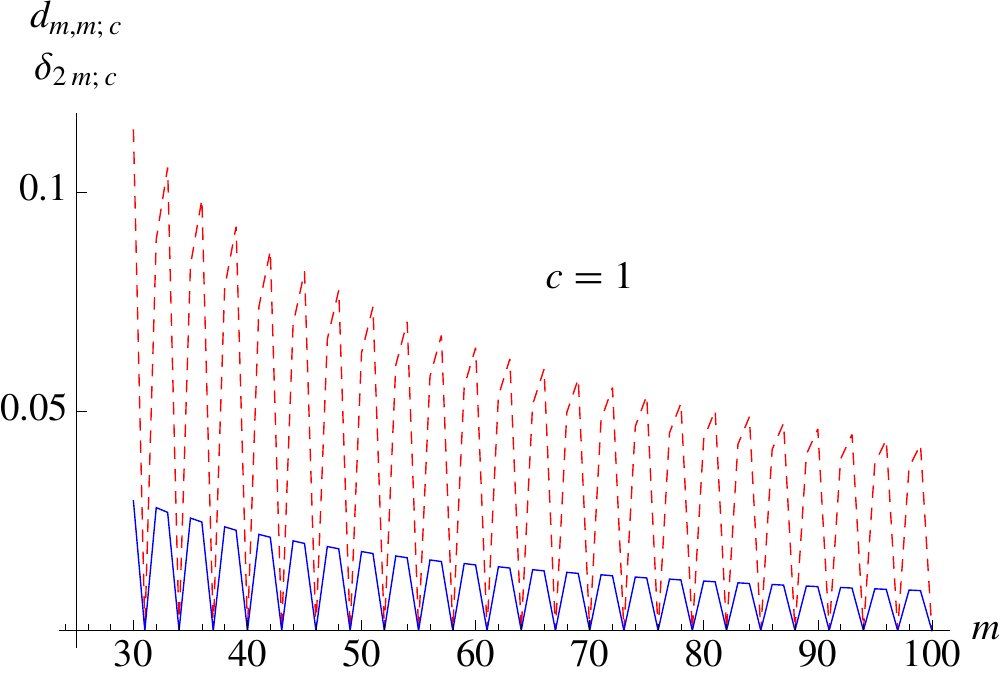}}
\caption{\small $d_{m,m;c}$ (dashed red line) and $\del_{2m;c}$ (solid blue line), drawn as functions of $m$ for $c=\sqrt{5}/2$ (top figure) and $c=1$ (bottom figure). The graphs in the bottom figure exhibit much more regular behaviour than those in the top one because
$\pi^{-1} \arccos(1/2)$ is rational, while $\pi^{-1} \arccos(\sqrt{5}/4)$ is not.\label{fig:pict8}}
\end{center}
\end{figure}

\clearpage

\appendix
\section{Ancillary materials}\label{sec:app}
The video file, {\tt frames3e.mp4} (approx. 4.4 MB), is available for download from the {\tt Ancillary files} section on the {\tt arXiv} page of this paper. This shows the dynamics of the eigenvalues of  $\Ap_{c;100}$ as $c$ diminishes from $2.05$ to $0$. The real eigenvalues are shown as blue dots, and the non-real ones as the red dots. Also, the asymptotic curves
$\pm \Lambda_c(|\Re(\lambda)|)$ are shown in black for $c<2$. 

The {\tt{\it Mathematica}${}^\text{\textregistered}$ 7} code used to produce the video file is available from the same section as {\tt makemovie.nb} and its listing as {\tt makemovie.pdf}.
\end{document}